\newcommand{\ubM}{M_{\leq 1}}
\newcommand{\ub}[1]{(#1)_{\leq 1}}
\newcommand{\bC}{\mathbf C} 
\newcommand{\Lang}{\mathcal L} 
\DeclareMathOperator{\Th}{Th} 
\DeclareMathOperator{\Hom}{Hom} 
\newcommand{\bT}{\mathbf T}
\newcommand{\bbN}{{\mathbb N}}
\newcommand{\bbQ}{{\mathbb Q}}
\newcommand{\bbR}{{\mathbb R}}
\newcommand{\bbC}{\mathbb C}
\DeclareMathOperator{\id}{id}
\newcommand{\CC}{C}
\newcommand{\cF}{\mathcal F}
\newcommand{\cB}{\mathcal B}
\newcommand{\e}{\varepsilon}
\newtheorem{thm}{Theorem}[section]
\newtheorem{theorem}[thm]{Theorem}
\newtheorem{corollary}[thm]{Corollary}
\newtheorem{question}[thm]{Question}
\newtheorem{lemma}[thm]{Lemma}
\newtheorem{proposition}[thm]{Proposition}
\newtheorem{prop}[thm]{Proposition}
\theoremstyle{definition}
\newtheorem{notat}[thm]{Notation}
\newtheorem{definition}[thm]{Definition}
\newtheorem{example}[thm]{Example}
\theoremstyle{remark}
\newtheorem{remark}[thm]{Remark}
\DeclareMathOperator{\Fin}{Fin}
\newcounter{my_enumerate_counter}
\newcommand{\pushcounter}{\setcounter{my_enumerate_counter}{\value{enumi}}}
\newcommand{\popcounter}{\setcounter{enumi}{\value{my_enumerate_counter}}}
 \DeclareMathOperator{\tr}{tr}
\DeclareMathOperator{\Tr}{Tr}
\newcommand{\bfT}{{\mathbf T}}
\newcommand{\cU}{{\mathcal U}}
\newcommand{\cV}{\mathcal V}
\newcommand{\bbZ}{\mathbb Z}
\newcommand{\bfa}{\mathbf a}
\newcommand{\bfb}{\mathbf b}
\newcommand{\fc}{\mathfrak c}
\def\CC{{\mathcal{C}}}
\def\CL{{\mathcal L}}
\def\CP{{\mathcal P}}
\def\CM{{\mathcal M}}
\def\CN{{\mathcal N}}
\def\CU{{\mathcal U}}
\def\CV{{\mathcal V}}
\def\CA{{\mathcal A}}
\def\CB{{\mathcal B}}
\def\CD{{\mathcal D}}
\def\CS{{\mathcal S}}
\title{Model theory of operator algebras II: Model theory}
\author{Ilijas Farah}
\address{Department of Mathematics and Statistics\\
York University\\
4700 Keele Street\\
North York, Ontario\\ Canada, M3J
1P3\\
and Matematicki Institut, Kneza Mihaila 34, Belgrade, Serbia}
\email{ifarah@mathstat.yorku.ca}
\urladdr{http://www.math.yorku.ca/$\sim$ifarah}
\author{Bradd Hart}
\address{Dept. of Mathematics and Statistics\\
McMaster University\\ 1280 Main Street\\ West Hamilton, Ontario\\
Canada L8S 4K1}
\email{hartb@mcmaster.ca}
\urladdr{http://www.math.mcmaster.ca/$\sim$bradd/}
\thanks{The first two authors are partially supported by NSERC}
\author{David Sherman}
\address{Department of Mathematics\\
University of Virginia\\
P. O. Box 400137\\
Charlottesville, VA 22904-4137}
\email{dsherman@virginia.edu}
\urladdr{http://people.virginia.edu/$\sim$des5e/}
\date{\today}
\begin{document}

\begin{abstract}
We  introduce a version of logic for metric structures suitable for
applications to C*-algebras and tracial von Neumann algebras.
We also prove a purely
model-theoretic result to the effect that the theory of a separable metric
structure is stable if and only if all of its ultrapowers associated
with nonprincipal ultrafilters on $\bbN$ are isomorphic even when the
Continuum Hypothesis fails.

\end{abstract}

\maketitle

\section{Introduction}

The present paper is a companion to \cite{FaHaSh:Model1}.
The latter paper was written in a way that completely suppressed
explicit use of logic, and model theory in particular, in order to
be more accessible to  operator algebraists.
Among other results, we will prove a metatheorem (Theorem~\ref{T0})
that explains results of \cite{FaHaSh:Model1}, as well as the word `stability'
in its title.

We will study operator algebras using a slightly modified version of
the \textit{model theory for metric structures}.  This is a logical
framework whose semantics are well-suited for the approximative
conditions of analysis; as a consequence it plays the same role for
analytic ultrapowers as first order model theory plays for classical
(set theoretic) ultrapowers.  We show that the continuum hypothesis
(CH) implies that all ultrapowers of a separable metric structure
are isomorphic, but under the negation of CH this happens if and
only if its theory is \textit{stable}.  Stability is defined in
logical terms (the space of $\varphi$-types over a separable model
is itself separable with a suitable topology), but it can be characterized as follows: a theory is \textit{not} stable if and
only if one can define arbitrarily long finite ``uniformly
well-separated" totally ordered sets in any model, a condition
called the \textit{order property}.  Provided that the class of
models under consideration (e.g., II$_1$ factors) is defined by a theory -- not always obvious or even true -- this brings the main question back into the arena of
operator algebras. To deduce the existence of nonisomorphic
ultrapowers under the negation of CH, one needs to establish the order
property by defining appropriate posets.   We proved in \cite{FaHaSh:Model1} that all infinite-dimensional C*-algebras and II$_1$ factors have the order property, while tracial von Neumann
algebras of type I do not.  In a sequel paper we will use the logic developed here to obtain new results about isomorphisms and embeddings between $\text{II}_1$ factors and their ultrapowers.


We now review some facts and terminology for operator algebraic ultrapowers
that we will use throughout the paper; this is reproduced for
convenience from \cite{FaHaSh:Model1}.

A von Neumann algebra $M$ is \textit{tracial} if it is equipped with a faithful normal tracial state~$\mbox{tr}$.  A finite factor has a unique tracial state which is automatically normal.  The metric induced by the $\ell^2$-norm, $\|a\|_2=\sqrt{\mbox{tr}(a^*a)}$, is not complete on $M$, but it is complete on the (operator norm) unit ball of $M$.  The completion of  $M$ with respect to this metric is
 isomorphic to a Hilbert space (see, e.g.,
\cite{Black:Operator}~or~\cite{Jon:von}).

The algebra of all sequences in $M$ bounded in the operator norm is denoted by $\ell^\infty(M)$.
If~$\cU$ is an ultrafilter on $\bbN$ then
\[
\textstyle c_{\cU}=\{\vec a\in \ell^\infty(M): \lim_{i\to \cU} \|a_i\|_2=0\}
\]
is a norm-closed two-sided ideal in
$\ell^\infty(M)$, and the \emph{tracial ultrapower} $M^{\cU}$ (also
denoted by $\prod_{\cU} M$) is defined to be the
quotient  $\ell^\infty(M)/c_{\cU}$.  It is well-known that $M^{\cU}$ is tracial, and a factor if and only if $M$ is---see, e.g., \cite{Black:Operator} or
\cite{Tak:TheoryIII}; this also follows from axiomatizability (\S\ref{S.Ax.II-1}) and \L o\' s's theorem (Proposition~\ref{P.Los} and the remark afterwards).

Elements of $M^{\cU}$ will either be denoted by boldface Roman letters such as
$\bfa$ or represented by sequences in $\ell^\infty(M)$.
Identifying a tracial von Neumann algebra $M$ with its diagonal image in  $M^{\cU}$, we will also work with the \emph{relative commutant} of $M$ in its ultrapower,
\[
M'\cap M^{\cU}=\{\bfb: (\forall a\in M) a\bfb=\bfb a\}.
\]

Tracial ultrapowers were first constructed in the 1950s and became standard tools after the groundbreaking papers of McDuff (\cite{McDuff:Central}) and Connes (\cite{Connes:Class}).  Roughly speaking, the properties of an ultrapower are the approximate properties of the initial object; see \cite{She:Notes} for a recent discussion of this.

In defining ultrapowers for C*-algebras (resp. groups with bi-invariant metric), $c_{\cU}$ is taken to be the sequences that converge to zero in the operator norm (resp. converge to the identity in the metric (\cite{Pe:Hyperlinear})).  All these constructions are special cases of the ultrapower/ultraproduct of metric structures (see \S\ref{S.Logic}, also \cite{BYBHU} or \cite{GeHa}).


\section{Logic}
\label{S.Logic}
\label{S.Syntax}
The purpose of this section is to introduce a logic which has some features geared to the treatment of $C^*$-algebras and von Neumann algebras.  In a treatment of such structures in bounded continuous logic (see \cite{BYU:ContStab}), it is typical to consider different sorts of balls of increasing radius.  The logic presented here is entirely equivalent to that formulation but allows us to introduce function symbols like $+$ and $\cdot$ without treating them as infinitely many different functions mapping between sorts.  This distinction is somewhat cosmetic but the treatment of terms in this logic highlights an issue that is common to both this logic and the multi-sorted version.  Details are given below but to make clear what is at stake, suppose we are considering a normed linear space and we wish to assert that the unit ball is convex.  The operation $+$ when restricted to the unit ball would most naturally map to the ball of radius 2.  Scalar multiplication by 1/2 maps the ball of radius 2 into the unit ball and so a natural way to set things up would be to have the term $(x+y)/2$ send the unit ball to itself and so the syntax guarantees that the unit ball is convex.  If on the other hand, the scalar 1/2 on the ball of radius 2 was said to have range that same ball (a logical possibility), then $(x+y)/2$ syntactically would only map the unit ball to the ball of radius 2 and we would need to have an axiom that said that this term in fact has range in the unit ball.  Issues of the axiomatizability of the classes of structures we are dealing with are bound up with the choice of range of terms in our language and are highlighted below.


\subsection{Language}  \label{SS.Language}

A language consists of
\begin{itemize}
\item Sorts, $\CS$, and for each sort $S \in \CS$, a set
of domains $\CD_S$ meant to be domains of quantification, and a
privileged relation symbol $d_S$ intended to be a metric. Each sort
comes with a distinct set of variables.
\item Sorted functions, $f:S_1\times\ldots\times S_n \rightarrow
S$ together with, for every choice of domains $D_i \in \CD_{S_i}$,  a
$D_{\bar D}^f \in \CD_S$ and for each $i$, a uniform continuity
modulus $\delta^{\bar D,\tau}_i$, i.e., a real-valued function on $\bbR$,
where $\bar D =\langle D_1,\ldots, D_n\rangle$.

\item Sorted relations $R$ on $S_1\times\ldots\times S_n$ such that
for every choice of domains $\bar D$ as above, there is a number
$N^R_{\bar D}$ as well as uniform continuity moduli dependent on $i$
and $\bar D$.

\item Terms are formed by the usual composition of function symbols and variables.  They inherit codomains and series of uniform continuity moduli from this composition.
\end{itemize}

\subsection{Structures} A structure $\CM$ assigns to each sort $S
\in \CS$, $M(S)$, a metric space where $d_S$ is interpreted as the
metric.  For each $D \in \CD_S$, $M(D)$ is a  subset of $M(S)$
complete with respect to $d_S$. The collection $\{M(D) : D \in
\CD_S\}$ covers $M(S)$.

Terms $\tau$ are interpreted as functions on a structure in the
usual manner.  If $\tau^M$ is the interpretation of $\tau$ and $\bar D$ is
a choice of domains from the relevant sorts then $\tau^M:M(\bar D)
\rightarrow M(D^\tau_{\bar D})$ and $\tau^M$ is uniformly continuous as
specified by the $\delta^{\bar D,\tau}$'s when restricted to $M(\bar
D)$.  This means for instance that for every $\epsilon > 0$, if $a,b \in M(D_1)$ and $c_i \in
M(D_i)$ for $i = 2,\ldots,n$ then $d_S(a,b) <  \delta^{\bar
D,\tau}_1(\epsilon)$ implies $d_{S'}(\tau(a,\bar c),\tau(b,\bar c)) \leq
\epsilon$, where $S$ is the sort associated to $D_1$ and $S'$ is the
sort associated with the range of $\tau$.

Sorted relations are maps $R^M:S_1\times\ldots\times S_n \rightarrow
\bbR$. They are handled similarly to sorted functions; uniform continuity
is as above when restricted to the appropriate domains and a
relation $R$ is bounded in absolute value by $N^R_{\bar D}$ when
restricted to $M(\bar D)$.

\subsection{Examples}
\subsubsection{C*-algebras} \label{Ex.C*}
We will think of a C*-algebra $A$
as a one-sorted
structure with sort $U$ for the algebra itself.  The
domains for $U$ are $D_n$ for every $n \in \bbN$ and are interpreted as all $x \in A$ with
$\|x\| \leq n$.  The metric on $U$ is
\[
d_U(a,b)=\|a-b\|.
\]

The functions in the language will be:
\begin{itemize}
\item The constant 0 which will be in $D_1$.  Note it is a requirement
of the language to say this.
\item For every $\lambda \in \bbC$ a unary function symbol also denoted $\lambda$ to be interpreted as scalar multiplication.
For simplicity we shall write $\lambda x$ instead of $\lambda(x)$.
\item A unary function symbol  $*$ for involution.
\item Binary function symbols  $+$ and $\cdot$.
\end{itemize}
Prescribing the uniform continuity moduli is straightforward.

If $A$ is a C*-algebra then there is a model, $\CM(A)$,
in $\CL_{C^*}$ associated to it which is essentially $A$ itself
endowed with the domains $D_n$ interpreted as the operator norm $n$-ball.

\subsubsection{Tracial von
Neumann algebras}
\label{Ex.II-1}
Tracial
von Neumann algebras will be treated as a one-sorted
structure with domains $D_n$ which as in the example of C*-algebras will be interpreted as the operator norm $n$-ball. The metric $d$ will
be the metric arising from the $\ell^2$ norm coming from the trace.   

The functions in the language are, in addition to  functions from \S\ref{Ex.C*},
\begin{itemize}
\item The constant 1 in $D_1$.
\item Two unary relation symbols $\tr^r$ and $\tr^i$ for the real and imaginary parts of the
trace function.  We will often just write $\tr$ and assume that the expression can be decomposed into the real and imaginary parts.
\end{itemize}
Again, this describes a language $\CL_{\Tr}$ once we add the
requirements about bounds on the range and uniform continuity. 

If $N$ is a
tracial von Neumann algebra then there is a model, $\CM(N)$, in
$\CL_{\Tr}$ associated to it which is essentially $N$ itself with the domains interpreted as above.

\begin{remark}
We emphasize that the operator norm is not a part of the language,
and that it is not even a definable relation. Note that all relations
are required to be uniformly continuous functions, and $\|\cdot\|$ is not
uniformly continuous with respect to $\|\cdot\|_2$.
\end{remark}

\subsubsection{Unitary groups}
The  syntax for logic of   unitary groups is simpler than that of tracial von Neumann algebras or C*-algebras. In this case the metric is bounded and therefore we can have one domain are equal to the universe $U$.
We have function symbols for the identity, inverse and the group
operation. Since in this case our logic reduces to the
standard logic of metric structures as introduced in \cite{BYBHU}
we omit the straightforward details and continue this
practice of suppressing the details for unitary groups throughout
this section.

\subsection{Syntax}\label{SS.Syntax}
\begin{itemize}
\item Formulas:
\begin{itemize}
\item If $R$ is a relation and $\tau_1,\ldots,\tau_n$ are terms then
$R(\tau_1,\ldots,\tau_n)$ is a basic formula.
\item If $f:\bbR^n \rightarrow \bbR$ is continuous and
$\varphi_1,\ldots,\varphi_n$ are formulas, then
$f(\varphi_1,\ldots,\varphi_n)$ is a formula.
\item If $D\in \CD_S$ and $\varphi$ is a formula then both $\sup_{x
\in D} \varphi$ and $\inf_{x \in D} \varphi$ are formulas.
\end{itemize}

\item Formulas are interpreted in the obvious manner in structures.
 The boundedness of relations when restricted to domains is essential to guarantee that
the sups and infs exist when interpreted. For a fixed formula
$\varphi$ and real number $r$, the expressions\\ $\varphi \leq r$ and
$r \leq \varphi$ are called {\em conditions} and are either true or false
in a given interpretation in a structure.
\end{itemize}

\subsubsection{The expanded language}\label{S.Expanded.Language}
In the above definition it was taken for granted that we have an infinite supply of distinct variables
appearing in terms. In~\S\ref{S.Types} below we shall need to introduce a set of new
constant symbols~$\bC$. Each $c\in \bC$ is assigned a sort $S(c)$ and a domain.
In the \emph{expanded language} $\Lang_{\bC}$ both variables and
constant symbols from $\bC$ appear in terms. Formulas and sentences
in $\Lang_{\bC}$ are defined as above. Note that, since  the
elements of $\bC$ are not variables, we do not allow quantification
over them.

\subsection{Theories and elementary equivalence} \label{S.Theory}
A sentence is a formula with no free variables.
 If $\varphi$ is a sentence and $\CM$ is a structure
then the result of interpreting $\varphi$ in $\CM$ is a real number,
$\varphi^\CM$.  The function which assigns these numbers to
sentences is the \emph{theory of $\CM$}, denoted by $\Th(\CM)$. Because we allow all continuous functions as connectives, in particular the functions $|x - \lambda|$, the
theory of a model $\CM$ is uniquely determined by its zero-set,
$\{\varphi: \varphi^{\CM}=0\}$. We shall therefore adopt the
convention that a set of sentences $T$ is a theory and say that
\emph{$\CM$ is a model of $T$}, $\CM\models T$,  if
$\varphi^{\CM}=0$ for all $\varphi\in T$.

The following is proved by induction on the complexity of the
definition of $\psi$.

\begin{lemma} \label{L.formula} Suppose
 $\CM$ is a model and $\psi(\bar x)$ is a formula, possibly with parameters
 from $M$. For every choice of  $\bar D$ sequence of domains consistent with the sorts of the variables, $\psi^{\CM}$ is a uniformly
continuous function on  $M(\bar D)$ into a compact subset of $\bbR$.

If $\Theta\colon \CM\to \CN$ is an isomorphism
then $\psi^{\CM}=\psi^{\CN}\circ \Theta$. \qed
\end{lemma}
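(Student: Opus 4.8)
The plan is to prove both assertions by a simultaneous induction on the complexity of the formula $\psi(\bar x)$, following the inductive definition of formulas in \S\ref{SS.Syntax}. For the first assertion, fix a choice of domains $\bar D$ consistent with the free variables of $\psi$; I must produce a single uniform continuity modulus and a single bound (so that the range lies in a fixed compact subset of $\bbR$) that work on all of $M(\bar D)$. For the second assertion, one checks that an isomorphism $\Theta$ (which by definition respects sorts, domains, the metrics $d_S$, the interpretations of function and relation symbols) intertwines the interpretations of $\psi$ in $\CM$ and $\CN$.

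First I would handle the base case. A basic formula has the form $R(\tau_1,\dots,\tau_n)$ with $R$ a relation symbol and the $\tau_j$ terms. Terms compose function symbols, so one first observes (a sub-induction, or a cited fact about terms) that for fixed domains $\bar D$ each term $\tau_j$ is uniformly continuous from $M(\bar D)$ into some $M(E_j)$ with $E_j \in \CD_{S_j}$ determined by the $D^f_{\bar D}$-bookkeeping in the language; compose the $\delta^{\bar D, f}_i$ moduli to get a modulus for $\tau_j$. Then $R^M$ restricted to $M(\bar E)$ is uniformly continuous (with the language's modulus) and bounded in absolute value by $N^R_{\bar E}$; composing with the term moduli gives uniform continuity of $\psi^\CM = R^M(\tau_1^M,\dots,\tau_n^M)$ on $M(\bar D)$, with values in the compact interval $[-N^R_{\bar E}, N^R_{\bar E}]$. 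For the isomorphism clause, $\Theta$ sends $M(D)$ onto $N(D)$ for each domain $D$, commutes with each $f^M$ (so with each term $\tau_j^M$), and satisfies $R^M = R^N \circ \Theta$ by definition of isomorphism; chaining these gives $\psi^\CM = \psi^\CN \circ \Theta$ on the relevant domain.

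Next, the connective step: if $\psi = g(\varphi_1,\dots,\varphi_k)$ with $g\colon \bbR^k \to \bbR$ continuous, then by the inductive hypothesis each $\varphi_\ell^\CM$ is uniformly continuous on $M(\bar D)$ with range in a compact $K_\ell \subseteq \bbR$; so $g$ restricted to $K_1 \times \cdots \times K_k$ is uniformly continuous (continuous on a compact set) and bounded, hence $\psi^\CM$ is uniformly continuous on $M(\bar D)$ with compact range $g(K_1\times\cdots\times K_k)$. The isomorphism clause is immediate: $\psi^\CN \circ \Theta = g(\varphi_1^\CN\circ\Theta,\dots,\varphi_k^\CN\circ\Theta) = g(\varphi_1^\CM,\dots,\varphi_k^\CM) = \psi^\CM$.

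The quantifier step is where the real care is needed, and I expect it to be the main obstacle. Suppose $\psi(\bar x) = \sup_{y \in D} \varphi(\bar x, y)$ for some $D \in \CD_S$ (the $\inf$ case is symmetric, or reduces via $\inf_y \varphi = -\sup_y(-\varphi)$, using that $-x$ is an allowed connective). Given domains $\bar D$ for $\bar x$, apply the inductive hypothesis to $\varphi$ with domains $\bar D$ together with $D$ for $y$: we get that $\varphi^\CM(\bar a, \cdot)$ is bounded on $M(D)$ uniformly in $\bar a \in M(\bar D)$ (by some $N$ coming from the relation bounds, so that the $\sup$ exists and lies in a fixed compact interval), and that there is a modulus $\delta$ witnessing uniform continuity jointly. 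The key point is that $\delta$ is uniform over the \emph{whole} of $M(\bar D)\times M(D)$ — this is exactly what the inductive statement gives — so for $\bar a, \bar a' \in M(\bar D)$ with $d(\bar a,\bar a')$ small, $|\varphi^\CM(\bar a,b) - \varphi^\CM(\bar a',b)| \le \epsilon$ for every $b \in M(D)$ simultaneously, whence $|\psi^\CM(\bar a) - \psi^\CM(\bar a')| = |\sup_b \varphi^\CM(\bar a,b) - \sup_b \varphi^\CM(\bar a',b)| \le \epsilon$; thus $\psi^\CM$ inherits the modulus $\delta$ and is uniformly continuous on $M(\bar D)$, with compact range. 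For the isomorphism clause, since $\Theta$ maps $M(D)$ bijectively onto $N(D)$ and $\varphi^\CM(\bar a, b) = \varphi^\CN(\Theta\bar a, \Theta b)$ for all $b \in M(D)$ by the inductive hypothesis, taking suprema over $b \in M(D)$ on the left equals taking suprema over $\Theta b$, i.e.\ over all of $N(D)$, on the right: $\psi^\CM(\bar a) = \sup_{b\in M(D)}\varphi^\CN(\Theta \bar a,\Theta b) = \sup_{b'\in N(D)}\varphi^\CN(\Theta\bar a, b') = \psi^\CN(\Theta\bar a)$. This completes the induction; the only subtlety worth flagging is the presence of parameters from $M$, which are simply treated as additional constants pinned to their domains and carry through every step unchanged.
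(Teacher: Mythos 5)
Your proof is correct and follows exactly the route the paper intends: the paper gives no details beyond the remark that the lemma ``is proved by induction on the complexity of the definition of $\psi$,'' and your argument is precisely that induction, carried out with the right care at the quantifier step (the sup over a fixed domain inherits the uniform continuity modulus of the matrix formula). Nothing to add.
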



Two models  $\CM$ and $\CN$ are \emph{elementarily equivalent}  if
$\Th(\CM)=\Th(\CN)$. 
A map $\Theta\colon \CM\to \CN$ is an
\emph{elementary embedding} if
 for all formulas $\psi$ with parameters in $M$, we have $\psi^{\CM}=\psi^{\CN}\circ \Theta$.

If $\CM$ is a submodel  of $\CN$ and the identity map from $\CM$ into $\CN$
is elementary then we say that
$\CM$ is an \emph{elementary submodel} of $\CN$.
It is not difficult to see that every elementary embedding is an
isomorphism onto its image,\footnote{an isomorphism in the
appropriate category; in case of operator algebras this is
interpreted as `*-isomorphism'} but not vice versa.

\section{Axiomatizability}

\begin{definition}
A category $\CC$  is \textbf{axiomatizable} if there is  a language
$\CL$ (as above), theory $T$ in  $\CL$,  and a collection of
conditions $\Sigma$ such that $\CC$ is equivalent to the category of
models of $T$ with morphisms given by maps that preserve $\Sigma$.
\end{definition}


The reason for being a little fussy about axiomatizability is that
in the cases we wish to consider, the models have more (albeit
artificial) `structure' than the underlying algebra (cf.
\S\ref{Ex.C*} and \S\ref{Ex.II-1}). The language of the model will
contain operation symbols for all the algebra operations (such as
$+,\cdot$ and~$*$) and possibly some distinguished constant symbols
(such as the unit) and predicates (e.g.,
 a distinguished state on a C*-algebra). It will also contain domains that
 are not part of the algebra's structure.

 In particular then, when we say that we have axiomatized a class of
 algebras $\CC$, we will mean that there is a first order continuous
 theory $T$ and specification of morphisms such that
 \begin{itemize}
 \item for any $A
 \in \CC$, there is a model $M(A)$ of $T$ determined up to
 isomorphism;
 \item for any model $M$ of $T$ there is $A \in \CC$ such that
 $M$ is isomorphic to $M(A)$;
 \item if $A,B \in \CC$ then there is a
 bijection between $\Hom(A,B)$ and $\Hom(M(A) ,M(B))$.
 \end{itemize}

Proving that a category is axiomatizable frequently involves somewhat tedious
syntactical considerations. However, once this is proved we can apply a variety of model-theoretic
tools to study this category. In particular, we can immediately conclude that
 the category is closed under taking ultraproducts---a nontrivial theorem in the
 case of tracial von Neumann algebras.
{}From here it also follows that  some natural categories of operator algebras are not axiomatizable
 (see Proposition~\ref{P.UHF}).

\subsection{Axioms for C*-algebras} We continue the discussion of model theory
of C*-algebras  started in \S\ref{Ex.C*}.
First we introduce two notational shortcuts.
If one wants to write
down axioms to express that $\tau = \sigma$ for terms $\tau$ and $\sigma$
then one can write
\[ \varphi_{\bar D} := \sup_{\bar a \in \bar D} d_U(\tau(\bar a),\sigma(\bar a)) \]
where $\bar D$ ranges over all possible choices of domains. Note
that this is typically an infinite set of axioms.  Remember that for
a model to satisfy $\varphi_{\bar D}$, this sentence would evaluate
to 0 in that model.  If this sentence evaluates to 0 for all choices
of $\bar D$ then clearly $\tau = \sigma$ in that model.

If one wants to write
down axioms to express that $\varphi\geq  \psi$ for formulas $\varphi$ and $\psi$
then one can write
\[
 \sup_{\bar a \in \bar D} \max(0,(\psi(\bar a)-\varphi(\bar a)))
\]
where $\bar D$ ranges over all possible choices of domains.  Again,
we will get the required inequality if all these sentences evaluate
to 0 in a model.

Using the above conventions, we are taking the universal closures
of the following formulas, where $x,y,z,a,b$, range over the algebra
and $\lambda,\mu$ range over the complex numbers.

Here are some sentences that evaluate to
zero in a C*-algebra $A$.  The first item guarantees that we have a $\bbC$-vector space.
\begin{enumerate}
\popcounter
\item\label{Ax.associativity}
 $x + (y + z) = (x + y) + z$, $x + 0 = x$, $x + (-x) = 0$ (where $-x$ is the scalar $-1$ acting on $x$),
  $x+y = y+x$,
 $\lambda(\mu x) = (\lambda\mu)x$,
 $\lambda(x+y) = \lambda x + \lambda y$,
 $(\lambda + \mu) x = \lambda x + \mu x$.
\pushcounter
\end{enumerate}

\begin{enumerate}
\popcounter
\item  $1x = x$,
 $x(yz) = (xy)z$,
 $\lambda(xy) = (\lambda x)y = x(\lambda y)$,
 $x(y+z) = xy + xz$; now we have a $\bbC$-algebra.
\item $(x^*)^* = x$, $(x+y)^* = x^* + y^*$, $(\lambda x)^* = \bar \lambda x^*$.
\item \label{Ax.product.*}
$(xy)^* = y^*x^*$.
\item \label{Ax.dU}
$d_U(x,y) = d_U(x-y,0)$; we will write $\|x\|$ for
$d_U(x,0)$.
\pushcounter
\end{enumerate}

\begin{enumerate}
\popcounter
\item $\|xy\|\leq \|x\|\|y\|$.
\item $\|\lambda x\|=|\lambda|\|x\|$.
\item\label{Ax.C*-equality}
 (C*-equality) $\|xx^*\|=\|x\|^2$.
\item \label{Ax.D1} $\sup_{a \in D_1} \|a\|\leq 1$.
\pushcounter
\end{enumerate}

One issue here is that these axioms are too weak to guarantee that $D_1$ is the operator norm unit ball.  To get around this we expand the language of C$^*$-algebras to include a function symbol $\tau_p$ for every *-polynomial $p$ in one variable.  The symbol $\tau_p$ will have the same uniform continuity modulus as $p$.  In order to determine the proper codomains, for every $n$, let $m$ be the least integer greater than or equal to $\max\{ \| p(a) \| : a \in M, M \in \CC \mbox{ and } \|a\| \leq n \}$ where $\CC$ is the class of C$^*$-algebras.  We will require $\tau_p:D_n \rightarrow D_m$ and we will add the universally quantified axioms 
\begin{enumerate}
\popcounter
\item\label{extra-ax} $\tau_p(x) = p(x)$ 
\pushcounter
\end{enumerate}
for all polynomials $p$.  This will force the polynomial $p$ to behave well with respect to where its range lands.  To see the effect of these axioms, we do a small calculation.

Suppose that $\CM$ is a structure that satisfies axioms 1 through \ref{Ax.D1} above.  Suppose $a \in M$, $\|a\| \leq 1$ and $a \in D_n(\CM)$.   Define
\[ t_n(x) = \left\{ \begin{array}{ll}
			1 & 0 \leq x \leq 1 \\
			\frac{1}{\sqrt{x}} & 1 < x \leq n
			\end{array} \right . \]
and consider $f(u) = ut_n(u^*u)$.  If we want to compute the norm of $f(u)$ for $\|u\| \leq n$, we see that
$\|f(u)\|^2 = \|t_n(u^*u)u^*ut_n(u^*u)\| = \|g(u^*u)\|$ where $g(x) = xt_n^2(x)$.  Since 
\[ g(x) = \left\{ \begin{array}{ll}
			x & 0 \leq x \leq 1 \\
			1 & 1 < x \leq n
			\end{array} \right . \]
we obtain that the norm of $f(u)$ is at most 1 when $\|u\| \leq n$.

Now fix polynomials $p_k(x)$ which tend to $t_n(x)$ from below on the interval $[0,n]$. By doing a calculation similar to the one above, the *-polynomial $q_k = up_k(u^*u)$ sends operators of norm $\leq n$ to operators of norm $\leq 1$.  This means that $\tau_{q_k}$ sends elements of $D_n$  to elements of $D_1$ by the specification of our language for C$^*$-algebras. Moreover, $ap_k(a^*a)$ tends to $a$ as $k$ tends to infinity.  Since $D_1(\CM)$ is complete, we obtain that $a \in D_1(\CM)$.   

\begin{proposition}
\label{P.C*-completeness}
The class of C*-algebras is axiomatizable by
theory  $\bfT_{C^*}$ consisting of axioms \eqref{Ax.associativity}--\eqref{extra-ax}. 
\end{proposition}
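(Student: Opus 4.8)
The plan is to show that the category of C*-algebras is equivalent to the category of models of $\bfT_{C^*}$ by exhibiting, for each C*-algebra $A$, the model $\CM(A)$ described in \S\ref{Ex.C*}, checking that it satisfies all the listed axioms, and conversely showing that every model $\CM\models\bfT_{C^*}$ is isomorphic to $\CM(A)$ for a canonically associated C*-algebra $A$; finally one checks that the correspondence is functorial and bijective on homomorphism sets. The morphisms here are those preserving the conditions $\varphi\le r$ for basic formulas $\varphi$ (item (3) of the Example following the definition of axiomatizability), i.e.\ the *-homomorphisms.

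First I would verify the easy direction: if $A$ is a C*-algebra, then $\CM(A)$ satisfies \eqref{Ax.1}--\eqref{Ax.D-n.2}. Axioms \eqref{Ax.1}--\eqref{Ax.C.3} force the $\bbC$-sort to be (isomorphic to) $\bbC$ as a $1$-dimensional C*-algebra; this is routine since $\{1,i\}$ spans $\bbC$ over $\bbR$ and the norm condition \eqref{Ax.C.3} pins down the real structure. The algebraic axioms \eqref{Ax.associativity} through \eqref{Ax.product.*} and the norm axioms through \eqref{Ax.C*-equality} hold by the very definition of a C*-algebra. The only slightly delicate point is \eqref{Ax.dU}: one must note that $d_U$ is defined as $\|a-b\|$, so $d_U(x,y)=d_U(x-y,0)$ is immediate, and thereafter $\|x\|$ is an abbreviation, not new data. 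Axioms \eqref{Ax.D-1}--\eqref{Ax.D-n.2} hold because $D_n$ is interpreted as the operator-norm $n$-ball: \eqref{Ax.D-1} says every $a\in D_n$ is approximated arbitrarily well by elements of $D_1$ after rescaling by $1/(\|a\|+\tfrac1n)$, which has norm $\le 1$; \eqref{Ax.D-n.1} and \eqref{Ax.D-n.2} say $(1/n)D_n\subseteq D_1$ densely, which holds with equality.

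The main obstacle — and the part deserving real care — is the converse: reconstructing $A$ from an arbitrary $\CM\models\bfT_{C^*}$ and showing $\CM\cong\CM(A)$. Given such $\CM$, set $A=\bigcup_n M(D_n)$ with the algebra operations inherited from $\CM$; by the algebraic axioms this is a complex $*$-algebra. The key points are (i) each $M(D_n)$ is complete in $d_U$ (built into the notion of structure), hence $A$ is complete on each ball, and one must check $A$ itself is a Banach space — this uses \eqref{Ax.D-n.1}/\eqref{Ax.D-n.2} to see that the $M(D_n)$ exhaust $A$ and that the norm defined by $\|x\|:=d_U(x,0)$ is a genuine norm making $A$ complete (a Cauchy sequence is eventually norm-bounded, hence lands in some $M(D_n)$, which is complete); (ii) axioms \eqref{Ax.D-1}--\eqref{Ax.D-n.2} force $M(D_n)$ to coincide with $\{x\in A:\|x\|\le n\}$, so that no spurious elements are hiding and $\CM$ is recovered as $\CM(A)$; (iii) the submultiplicativity axiom and the C*-equality $\|xx^*\|=\|x\|^2$ then make $A$ a C*-algebra. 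Once $A\mapsto\CM(A)$ and $\CM\mapsto A$ are seen to be mutually inverse up to isomorphism, functoriality is a formality: a $*$-homomorphism $A\to B$ is exactly a map $M(A)\to M(B)$ commuting with the function symbols and preserving the conditions $\|x\|\le r$ (it is automatically contractive, hence respects domains), which is precisely a $\Sigma$-preserving morphism of models, giving the required bijection $\Hom(A,B)\cong\Hom(\CM(A),\CM(B))$.
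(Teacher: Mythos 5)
Your proposal is correct and follows essentially the same route as the paper: verify that $\CM(A)$ satisfies the axioms, recover a C*-algebra from any model of $\bfT_{C^*}$ by forgetting the domains and the $\bbC$-sort, and reduce the equivalence of categories to the fact that the final four axioms force each $D_n$ to be interpreted as the operator-norm $n$-ball. The paper states these steps very tersely ("clear", "clearly a C*-algebra"), and your elaboration of the completeness and domain-identification points is a faithful filling-in of exactly those steps.
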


\begin{proof}
It is clear that for a C*-algebra $A$ the model $\CM(A)$ as defined
in \S\ref{Ex.C*} satisfies $\bfT_{C^*}$. Conversely, if a model
$\CM$  of $\CL_{C^*}$ satisfies $\bfT_{C^*}$ then the algebra
$A_\CM$ obtained from $\CM$ by forgetting the domains is a C*-algebra by Gel'fand-Naimark.

To see that this provides an equivalence of categories, we only need
to show that $M(A_\CM) \cong \CM$. To see this, we must show that the domains on $\CM$ are determined by $A_\CM$.  Since multiplication by a scalar $r$ provides a bijection between the operator norm unit ball and the ball of radius $r$, it suffices to show that the operator norm unit ball and those elements of $D_1(\CM)$ coincide.  By axiom \ref{Ax.D1}, we have that the latter is contained in the former.  The other direction is just the calculation we did immediately before the Proposition.\end{proof}

\subsection{Axioms for tracial von Neumann algebras}\label{S.Ax.II-1}
We continue our discussion of model theory of tracial von Neumann
algebras from \S\ref{Ex.II-1}. Axioms for tracial von Neumann
algebras and II$_1$ factors appear in the context of bounded
continuous logic in \cite{BYHJR}; those axioms are restricted to axiomatizing the norm one unit ball.  
We feel in this context axiomatizing von Neumann algebras in the logic described in the previous section
makes the axioms more natural. Here are some sentences that
evaluate to zero in a tracial von Neumann algebra $N$:
\begin{enumerate}
\popcounter
\item\label{Ax.vNA.1}
 All axioms \eqref{Ax.associativity}--\eqref{Ax.dU}  plus $1x = x = x1$ for the constant 1 of $N$. In case of \eqref{Ax.dU}
we will write $\|x\|_2$ for
$d_U(x,0)$.
\item $\tr(x+y) = \tr(x) + \tr(y)$
\item \label{Ax.trace.x*}
$\tr(x^*) = \overline{\tr(x)}$,  $\tr(\lambda x) = \lambda \tr(x)$,  $\tr(xy) = \tr(yx)$,  $\tr(1) = 1$,
\item $\tr(x^*x) = \|x\|_2^2$.
\pushcounter
\end{enumerate}
Any model of these axioms will be a tracial *-algebra. 
The remaining axiom will guarantee that the relationship between the domains and the 2-norm is correct.

\begin{enumerate}
\popcounter
\item \label{Ax.vNA.critical}
For every $n, m \in \bbN$,
\[ \sup_{a \in D_n} \sup_{x \in D_m} \max\{ 0, \| ax\|_2 - n\|x\|_2\} \]
\pushcounter
\end{enumerate}

In addition to these axioms, we also introduce terms $\tau_p$ for all unary *-polynomials $p$ as discussed above for C$^*$-algebras.

\begin{prop}
\label{P.vNA-completeness}
The class of tracial von Neumann algebras is axiomatizable
by theory  $\bfT_{\Tr}$
consisting of axioms  \eqref{extra-ax}--\eqref{Ax.vNA.critical}.
\end{prop}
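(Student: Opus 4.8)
The plan is to mirror the structure of the proof of Proposition~\ref{P.C*-completeness}, the only genuine difference being that the operator norm—hence the identification of $D_1$ with the norm unit ball—is no longer immediate and must be recovered from the trace-norm data. First I would check the easy direction: if $N$ is a tracial von Neumann algebra, then $\CM(N)$ from \S\ref{Ex.II-1} satisfies \eqref{Ax.vNA.1}--\eqref{Ax.vNA.convex}. The algebraic axioms inherited via \eqref{Ax.vNA.1} are routine, and the trace axioms \eqref{Ax.trace.x*}, $\tr(x^*x)=\|x\|_2^2$, and the contractivity axiom \eqref{Ax.vNA.critical} (left multiplication by an element of $D_n$ is a bounded operator of norm $\leq n$ on the GNS Hilbert space, hence $\|ax\|_2\le n\|x\|_2$) all hold by standard von Neumann algebra facts. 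For \eqref{Ax.vNA.polar} one uses that in a finite von Neumann algebra the polar decomposition $a=v|a|$ has $v$ a unitary whenever $a$ has dense range, and otherwise approximates in $\|\cdot\|_2$; for \eqref{Ax.vNA.convex} one uses norm-convexity of the unit ball. None of this is deep.

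For the converse, suppose $\CM\models\bfT_{\Tr}$. Forgetting the $\bbC$-sort and the domains, I would first extract a unital complex $*$-algebra $A_\CM=\bigcup_n M(D_n)$ equipped with a linear functional $\tr$ which the axioms force to be a faithful positive tracial state: positivity and the identity $\tr(a^*a)=\|a\|_2^2\geq 0$ come from the axioms, faithfulness because $d_U$ is a metric. Let $H$ be the $\|\cdot\|_2$-completion, so that $A_\CM$ acts on $H$ by left multiplication. Axiom \eqref{Ax.vNA.critical} says exactly that each $a\in M(D_n)$ acts as a bounded operator of norm $\le n$, so we get a $*$-homomorphism $\pi\colon A_\CM\to B(H)$; set $M=\pi(A_\CM)''$, a von Neumann algebra carrying the faithful normal tracial state extending $\tr$. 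The main work is then to show $\CM\cong\CM(M)$, i.e. that $M(D_n)$ is \emph{exactly} the operator-norm $n$-ball of $M$, not merely a $\|\cdot\|_2$-dense or $\|\cdot\|_2$-closed subset of it. One inclusion is axiom \eqref{Ax.vNA.critical} together with the completeness of $M(D_n)$ in $\|\cdot\|_2$: $M(D_n)$ is a $\|\cdot\|_2$-closed subset of the $n$-ball. For the reverse inclusion, I would use the polar-decomposition axiom \eqref{Ax.vNA.polar} to write any $a\in M(D_n)$ as $a=bc$ with $b$ unitary in $M(D_1)$ and $c\geq 0$ in $M(D_n)$; then the convexity axiom \eqref{Ax.vNA.convex} shows $M(D_1)$ is a $\|\cdot\|_2$-dense, $\|\cdot\|_2$-closed, convex, unitary-containing subset of the unit ball, and a Kaplansky-density-type argument (the unit ball of $M$ is the $\|\cdot\|_2$-closed convex hull of its unitaries, by Russo--Dye, and $M(D_1)$ is $\|\cdot\|_2$-complete and convex and contains all unitaries of $M$) forces $M(D_1)$ to be the whole unit ball. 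Scaling by $1/n$ via \eqref{Ax.D-n.1}--\eqref{Ax.D-n.2} inside \eqref{Ax.vNA.1} then gives $M(D_n)=(M)_{\le n}$.

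Finally, for the morphism clause: the specified morphisms between models of $\bfT_{\Tr}$ should be the maps preserving the conditions $\varphi\le r$, $r\le\varphi$ for basic formulas, which (as in the examples after the definition of axiomatizability) are exactly the isometric $*$-homomorphisms preserving the trace, and these correspond bijectively to the trace-preserving $*$-homomorphisms of the underlying tracial von Neumann algebras, giving the equivalence of categories. The hard part, and the step that genuinely uses that we are in the tracial (finite) setting rather than working with C*-algebras, is the identification $M(D_1)=(M)_{\le 1}$: this is where \eqref{Ax.vNA.critical}, \eqref{Ax.vNA.polar}, \eqref{Ax.vNA.convex}, and the $\|\cdot\|_2$-completeness of the domains all have to be combined, since the operator norm is invisible to the structure and must be synthesized from the trace.
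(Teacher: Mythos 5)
Your proposal is correct and follows essentially the same route as the paper: represent the algebra on the $\|\cdot\|_2$-completion via left multiplication, use axiom \eqref{Ax.vNA.critical} for boundedness and the inclusion $M(D_1)\subseteq (M)_{\leq 1}$, and recover the reverse inclusion from the polar decomposition and convexity axioms together with Russo--Dye and $\|\cdot\|_2$-completeness. The only cosmetic difference is that you pass to the double commutant explicitly, whereas the paper argues directly that the $\|\cdot\|_2$-completeness of the domains (which induce the strong operator topology on bounded sets) already makes the represented algebra a von Neumann algebra.
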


\begin{proof}  It is clear that for a tracial von Neumann algebra $N$ the model $\CM(N)$
as defined in \S\ref{Ex.II-1} satisfies $\bfT_{\Tr}$.
 Assume $\CM$ satisfies $\bfT_{\Tr}$.
To see that in the sort $U$ we have a tracial von Neumann
algebra suppose $A$ is the underlying set for $U$ in $\CM$. Then  $A$ is a
complex pre-Hilbert space with inner product given by $\tr(y^*x)$.    Left multiplication
by $a \in A$ is a linear operator on $A$ and axiom \eqref{Ax.vNA.critical}
 guarantees that $a$ is bounded.
The operation $*$ is the adjoint because for all $x$ and $y$ we have
$\langle ax,y \rangle = \tr(y^*ax) =\tr((a^*y)^* x)= \langle x, a^*y \rangle$.  Thus $A$ is faithfully represented as a *-algebra of Hilbert space operators.
 We know that $D_n(A)$ is
complete with respect to the 2-norm for all $n$ and the 2-norm induces the strong operator topology on $A$ in this representation; it follows from the Kaplansky density theorem that $A$  is
a tracial von Neumann algebra.

As in the case of C*-algebras above, to show that we have an
equivalence of categories, it will suffice to show that if $\CM$ is
a model of the $\bfT_{\Tr}$ then
$D_1(A)$ is given by the operator norm unit ball on $A$.  Axiom \eqref{Ax.vNA.critical} guarantees that $a \in D_1(A)$ then $\|a\| \leq 1$ and the functional calculus argument from the proof of Proposition \ref{P.C*-completeness} shows $D_1(A)$ equals the operator norm unit ball.
\end{proof}


For $a$ in a tracial von Neumann algebras define the following:
\begin{align*}
\xi(a)&=\sqrt{\|a\|_2^2-\tr^2(a)},\\
\eta(a)&=\sup_{b \in D_1} \|ab-ba\|_2.
\end{align*}
Since $\xi$ and $\eta$ are interpretations of terms in the language of
tracial von Neumann algebras,  the following is a sentence of this language.
 \begin{enumerate}
 \popcounter
 \item\label{Ax.vNA.factor}  $\sup_{a \in D_1}  \max\{0,(\xi(a)- \eta(a))\}$.
\pushcounter
\end{enumerate}
 Also consider the axiom
 \begin{enumerate}
 \popcounter
 \item  \label{Ax.vNA.atomless} $\inf_{a \in D_1} (\|aa^*-(aa^*)^2\|_2+|\tr(aa^*)-1/\pi|)$.
\end{enumerate}

\begin{prop} \label{P.II-1-completeness}
\begin{enumerate}
\item \label{P.II-1.completeness.1}
The class of tracial von Neumann factors is axiomatizable
by the theory
consisting of axioms  \eqref{extra-ax}--\eqref{Ax.vNA.factor}.

\item \label{P.II-1.completeness.2}
The class of II$_1$ factors is axiomatizable
by the theory  $\bfT_{II_1}$
consisting of axioms  \eqref{extra-ax}--\eqref{Ax.vNA.atomless}.
\end{enumerate}
\end{prop}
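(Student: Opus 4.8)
The plan is to bootstrap from Proposition~\ref{P.vNA-completeness}, which already provides the equivalence of categories for tracial von Neumann algebras; keeping the same morphisms, for part~\eqref{P.II-1.completeness.1} it suffices to show that a tracial von Neumann algebra $N$ is a factor if and only if $N$ satisfies~\eqref{Ax.vNA.factor}. The direction ``axiom implies factor'' is immediate. By homogeneity of $\xi$ and $\eta$, \eqref{Ax.vNA.factor} is equivalent to the inequality $\xi(a)\leq\eta(a)$ for all $a\in N$; if $a\in Z(N)$ then $\eta(a)=\sup_{b\in D_1}\|ab-ba\|_2=0$, hence $\xi(a)=\|a-\tau(a)\cdot 1\|_2=0$, and since $\tau$ is faithful this gives $a=\tau(a)\cdot 1\in\bbC\cdot 1$. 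Thus $Z(N)=\bbC\cdot 1$ and $N$ is a factor.

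For the converse, the key input is the Dixmier averaging theorem: for $a$ in a factor $N$, the operator-norm closure of the convex hull of the unitary orbit $\{uau^*:u\in N\text{ unitary}\}$ meets the center $\bbC\cdot 1$, and the point it meets there is necessarily $\tau(a)\cdot 1$ since $\tau$ is unitarily invariant and norm-continuous. So, given $a\in D_1$ and $\e>0$, we may choose unitaries $u_1,\dots,u_m\in N$ and weights $\lambda_i\geq 0$ with $\sum_i\lambda_i=1$ such that $\|\sum_i\lambda_i u_iau_i^*-\tau(a)\cdot 1\|<\e$ in the operator norm, and hence also in $\|\cdot\|_2$. Using that $\|\cdot\|_2$ is unitarily invariant and that $\|a-u_iau_i^*\|_2=\|au_i-u_ia\|_2\leq\eta(a)$ (as $u_i\in D_1$), the triangle inequality gives
\[
\xi(a)\;\leq\;\Bigl\|a-\sum_i\lambda_i u_iau_i^*\Bigr\|_2+\e\;\leq\;\sum_i\lambda_i\|a-u_iau_i^*\|_2+\e\;\leq\;\eta(a)+\e.
\]
Letting $\e\to 0$ yields $\xi(a)\leq\eta(a)$ for all $a\in D_1$, so $N$ satisfies~\eqref{Ax.vNA.factor}. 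I expect this appeal to Dixmier averaging to be the only substantive step in part~\eqref{P.II-1.completeness.1}; the rest is the routine check that the full subcategory of factors corresponds to the full subcategory of models of the enlarged theory.

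For part~\eqref{P.II-1.completeness.2}, since \eqref{Ax.vNA.factor} is among the axioms of $\bfT_{II_1}$, part~\eqref{P.II-1.completeness.1} reduces the problem to showing that a tracial von Neumann factor $N$ satisfies~\eqref{Ax.vNA.atomless} if and only if it is a II$_1$ factor; here one uses that a tracial factor is either $M_n(\bbC)$ for some finite $n$ or a II$_1$ factor. If $N$ is a II$_1$ factor, the trace takes all values in $[0,1]$ on projections, so there is a projection $p$ with $\tau(p)=1/\pi$; taking $a=p\in D_1$ makes $aa^*=p$ a projection, so the term inside the infimum in \eqref{Ax.vNA.atomless} vanishes at $a$ and that infimum equals $0$. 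If instead $N=M_n(\bbC)$, then $D_1$ is norm-compact and $a\mapsto\|aa^*-(aa^*)^2\|_2+|\tau(aa^*)-1/\pi|$ is continuous, so it attains its infimum at some $a_0\in D_1$; were this infimum $0$, then $a_0a_0^*$ would be a positive self-adjoint idempotent, i.e.\ a projection, of trace $1/\pi$, which is impossible since projections in $M_n(\bbC)$ have trace $k/n$ for an integer $k$ while $1/\pi$ is irrational. Hence the infimum is strictly positive, $N$ does not satisfy~\eqref{Ax.vNA.atomless}, and combining the two cases finishes the proof.
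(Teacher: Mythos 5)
Your proposal is correct and follows essentially the same route as the paper: reduce to Proposition~\ref{P.vNA-completeness}, characterize factors among tracial von Neumann algebras by axiom~\eqref{Ax.vNA.factor}, and characterize type II$_1$ among tracial factors by axiom~\eqref{Ax.vNA.atomless} using a trace-$1/\pi$ projection in one direction and compactness of the unit ball of $\mathbb{M}_k$ in the other. The only difference is that the paper quotes the key inequality $\eta(a)\geq\xi(a)$ for factors from a Dixmier-averaging lemma in the companion paper, whereas you derive it inline from the Dixmier averaging theorem (correctly), and you verify the ``axiom implies factor'' direction for general central elements rather than via a nontrivial central projection.
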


\begin{proof}  For  \eqref{P.II-1.completeness.1},
by Proposition~\ref{P.vNA-completeness}, it suffices to
 prove that if $M$ is a tracial von Neumann algebra then
axiom \eqref{Ax.vNA.factor}
holds in $M$ if and only if $M$ is a factor.
If it is not a factor, let $p$ be a nontrivial central projection.  Then $\xi(p)= \sqrt{\tr(p) - \tr(p)^2} > 0$ but $\eta(p)=0$, therefore \eqref{Ax.vNA.factor}
fails in $M$.  If it is a factor, the inequality $\eta(a) \geq \xi(a)$ follows from \cite[Lemma 4.2]{FaHaSh:Model1}.

For \eqref{P.II-1.completeness.2} we need to
show that axiom \eqref{Ax.vNA.atomless} holds in a tracial factor $M$
if and only if $M$ is type $\text{II}_1$.  When $M$ is type $\text{II}_1$, \eqref{Ax.vNA.atomless} is satisfied by taking $a$ to be a projection of
trace $1/\pi$.  On the other hand, a tracial factor $M$ not of type $\text{II}_1$ is some matrix factor $\mathbb{M}_k$.  If $\mathbb{M}_k$ were to satisfy \eqref{Ax.vNA.atomless}, by compactness of the unit ball there would be $a \in \mathbb{M}_k$ satisfying $\|(aa^*)-(aa^*)^2\|_2=0$ and $|\tr(aa^*) - 1/\pi| = 0$.  Thus $aa^* \in \mathbb{M}_k$ would be a projection of trace $1/\pi$, which is impossible.  (Of course this argument still works if $1/\pi$ is replaced with any irrational number in $(0,1)$.)
\end{proof}

\section{Model-theoretic toolbox}

In the present section we introduce  variants of some of the
standard model-theoretic tools for the logic described in
\S\ref{S.Logic}.

\subsection{Ultraproducts}
Assume $\CM_i$, for $i\in I$, are models of the same language and
$\cU$ is an ultrafilter on $I$. The \emph{ultraproduct} $\prod_{\cU}
\CM_i$ is a model of the same language defined as follows.

In a model $\CM$, we write $S^\CM$ and $D^\CM$ for the interpretations
$S$ and $D$ in $\CM$.  For each sort  $S\in \CS$, let
\[
\textstyle X_S = \{ \bar a \in \prod_{i \in
I} S^{\CM_i} : \mbox{for some } D \in \CD_S, \{i \in I : a_i \in
D^{\CM_i}\} \in \cU\}.
 \]
For $\bar a$ and $\bar b$ in $X_S$,
$d'_S(\bar a,\bar b) = \lim_{i\rightarrow\cU} d_S^{\CM_i}(a_i,b_i)$
defines a pseudo-metric on $X_S$.  Let $S^{\CM'}$ be the quotient
space of $X_S$ with respect to the equivalence $\bar a \sim \bar b$
iff $d'_S(\bar a,\bar b) = 0$ and let $d_S$ be the associated
metric.  For $D \in \CD_S$, let $D^{\CM'}$ be the quotient of $$\{
\bar a \in X_S : \{ i\in I : a_i \in D^{\CM_i} \} \in \cU\}.$$ All
the functions and predicates are interpreted  in the natural way.
Their restrictions to each $\bar D$ are uniformly continuous and
respect the corresponding uniform continuity moduli. If $\CM_i=\CM$
for all $i$ then we call the ultraproduct  an \emph{ultrapower} and
denote it by
 $\CM^{\cU}$.
The `generalized ultraproduct construction' as introduced in
\cite[p. 308--309]{GeHa} reduces to the model-theoretic ultraproduct
in the case of both tracial von Neumann algebras and C*-algebras.

We record a straightforward consequence of the definitions and the
axiomatizability, that the
 functors  corresponding to taking the  ultrapower and defining a model
 commute. The ultrapowers of C*-algebras and tracial von Neumann algebras
 are defined in the usual way.

\begin{proposition}\label{P.ultrapowers}
If  $A$ is a C*-algebra or  a tracial von Neumann algebra and $\cU$
is an ultrafilter then   $\CM(A^{\cU})=\CM(A)^{\cU}$. 
\end{proposition}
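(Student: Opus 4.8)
The plan is to verify both directions of the asserted equality of models by comparing the two constructions sort by sort. Recall that for a C*-algebra (resp. tracial von Neumann algebra) $A$, the model $\CM(A^\cU)$ is obtained by first forming the operator-algebraic ultrapower $A^\cU=\ell^\infty(A)/c_\cU$ and then attaching the artificial sort for $\bbC$ with its specified domains $D_n$, while $\CM(A)^\cU$ is obtained by forming the model $\CM(A)$ first and then applying the model-theoretic ultrapower of \S4.1. Since both are models of $\bfT_{C^*}$ (resp. $\bfT_{\Tr}$) by Proposition~\ref{P.C*-completeness} (resp. Proposition~\ref{P.vNA-completeness}) and \L o\'s's theorem, and since the axiomatization result tells us a model of the theory is determined up to isomorphism by its underlying algebra, it suffices to produce a natural isomorphism of the underlying algebras that respects the sort for $\bbC$.

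First I would treat the sort for $\bbC$: in $\CM(A)$ this sort is interpreted as $\bbC$ with domains the balls $B_n$, and in the model-theoretic ultrapower the sort $X_{\bbC}$ consists of bounded sequences in $\bbC$ modulo infinitesimals, which is again $\bbC$ because $\bbC$ is locally compact (equivalently, because of axiom~\eqref{Ax.C.3}); the diagonal embedding is the isomorphism, and it clearly matches the copy of $\bbC$ in $\CM(A^\cU)$. Next I would treat the main sort $U$. On the model-theoretic side, $X_U=\{\bar a\in\prod_i A : \exists n\ \{i: a_i\in D_n^{\CM(A)}\}\in\cU\}$; since $D_n^{\CM(A)}$ is exactly the operator-norm $n$-ball of $A$ (this is part of what Propositions~\ref{P.C*-completeness} and~\ref{P.vNA-completeness} establish), $X_U$ is precisely the set of sequences that are eventually-along-$\cU$ norm-bounded, i.e.\ $X_U=\{\bar a : \bar a\ \text{agrees}\ \cU\text{-a.e.\ with an element of}\ \ell^\infty(A)\}$. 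The pseudometric $d'_U(\bar a,\bar b)=\lim_{i\to\cU}\|a_i-b_i\|$ (resp.\ $\lim_{i\to\cU}\|a_i-b_i\|_2$) has null set exactly $c_\cU$-related sequences, so the quotient $U^{\CM(A)^\cU}$ is canonically identified with $\ell^\infty(A)/c_\cU=A^\cU$. One then checks the interpretations of $+,\cdot,*$, scalar multiplication, the constants, and (in the tracial case) $\tr$ agree under this identification, which is immediate since each is computed coordinatewise and passed to the limit, matching the definition of the operations on $A^\cU$; and the domains $D_n$ on both sides are the operator-norm $n$-balls by the cited propositions.

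The one point requiring genuine (if brief) attention — and the step I expect to be the main obstacle — is matching the domains $D_n$ in the tracial von Neumann algebra case. On the model-theoretic ultrapower side $D_n^{\CM(A)^\cU}$ is the quotient of $\{\bar a\in X_U : \{i : a_i\in D_n^{\CM(A)}\}\in\cU\}$, i.e.\ sequences that are $\cU$-a.e.\ of operator norm $\le n$; one must argue this coincides with the set of elements of $A^\cU$ of operator norm $\le n$ as computed inside the von Neumann algebra $A^\cU$. For C*-algebras this is transparent because the norm is just $d_U(\cdot,0)$ and hence respected by the ultrapower construction. For tracial von Neumann algebras the operator norm is not part of the structure, but Proposition~\ref{P.vNA-completeness} guarantees that in any model of $\bfT_{\Tr}$ — in particular in $\CM(A)^\cU$ — the domain $D_n$ is exactly the operator-norm $n$-ball of the underlying algebra; so it is enough to observe that the underlying algebra of $\CM(A)^\cU$, with its domains forgotten, is $A^\cU$ with its usual operator-algebra structure, which is what the coordinatewise computation above shows. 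Thus the isomorphism $\CM(A)^\cU\cong\CM(A^\cU)$ follows, and since both are models of the same axiomatizable theory determined up to isomorphism by the underlying algebra, we are done. \qed
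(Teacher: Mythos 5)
Your proof is correct and follows exactly the route the paper intends: the paper gives no explicit proof, recording the proposition as ``a straightforward consequence of the definitions and the axiomatizability,'' and the remark immediately following it singles out the judicious choice of domains as the one delicate point --- precisely the issue you isolate and resolve via Proposition~\ref{P.vNA-completeness}. Nothing further is needed.
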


\begin{corollary}\label{C.nonisomorphic-ultrapowers}
A C*-algebra (or a tracial von Neumann algebra) $A$ has
nonisomorphic ultrapowers if and only if the model $\CM(A)$ has
nonisomorphic ultrapowers.
\end{corollary}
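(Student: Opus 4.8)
The plan is to reduce the statement to Proposition~\ref{P.ultrapowers} together with the fact, established in Propositions~\ref{P.C*-completeness} and~\ref{P.vNA-completeness}, that $A\mapsto\CM(A)$ is one half of an equivalence between the category of C*-algebras (respectively tracial von Neumann algebras) and the category of models of $\bfT_{C^*}$ (respectively $\bfT_{\Tr}$). Since every functor preserves isomorphisms and a functor that is part of an equivalence also reflects them, this gives: for C*-algebras $A$, $B$ one has $A\cong B$ as C*-algebras (i.e., $*$-isomorphic) if and only if $\CM(A)\cong\CM(B)$ as $\CL_{C^*}$-structures, and likewise in the tracial case.

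First I would fix two ultrafilters $\cU,\cV$ on $\bbN$ and use Proposition~\ref{P.ultrapowers} to write $\CM(A)^{\cU}=\CM(A^{\cU})$ and $\CM(A)^{\cV}=\CM(A^{\cV})$; thus, up to isomorphism, the ultrapowers of the structure $\CM(A)$ are precisely the structures attached to the operator-algebraic ultrapowers of $A$. (Here I also invoke that the relevant class is closed under ultraproducts — a consequence of axiomatizability and \L os's theorem — so that $A^{\cU}$ and $A^{\cV}$ are again C*-algebras, resp. tracial von Neumann algebras, and hence lie in the domain of the functor $\CM(-)$.)

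The two implications are then symmetric. If $A^{\cU}\cong A^{\cV}$, applying the functor gives $\CM(A^{\cU})\cong\CM(A^{\cV})$, i.e., $\CM(A)^{\cU}\cong\CM(A)^{\cV}$; contrapositively, nonisomorphic ultrapowers of $\CM(A)$ force nonisomorphic ultrapowers of $A$. Conversely, if $\CM(A)^{\cU}\cong\CM(A)^{\cV}$, i.e., $\CM(A^{\cU})\cong\CM(A^{\cV})$, then because the functor reflects isomorphisms we obtain $A^{\cU}\cong A^{\cV}$; contrapositively, nonisomorphic ultrapowers of $A$ force nonisomorphic ultrapowers of $\CM(A)$. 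Quantifying over all pairs $(\cU,\cV)$ yields the claimed equivalence.

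The only point that needs care — and it is the real crux, though it has already been dealt with — is the identification of $\CL_{C^*}$- (resp. $\CL_{\Tr}$-) structure isomorphisms of $\CM(A)$, $\CM(B)$ with $*$-isomorphisms of $A$, $B$: an abstract isomorphism of structures must drop to a $*$-isomorphism of the underlying algebras, and conversely a $*$-isomorphism must be seen to extend over the auxiliary sort for $\bbC$ and to respect the domains $D_n$. This is exactly what the equivalences of categories in Propositions~\ref{P.C*-completeness} and~\ref{P.vNA-completeness} provide (the $\bbC$-sort is rigid, and the $D_n$ are canonically the operator-norm balls), so nothing new is required. Everything else is a formal manipulation with the functor $\CM(-)$ and Proposition~\ref{P.ultrapowers}.
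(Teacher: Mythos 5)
Your proposal is correct and follows exactly the paper's route: the paper's proof consists of the single sentence that the corollary is immediate from Propositions~\ref{P.C*-completeness}, \ref{P.vNA-completeness} and \ref{P.ultrapowers}, which is precisely the combination (equivalence of categories, so $\CM(-)$ preserves and reflects isomorphisms, plus $\CM(A)^{\cU}=\CM(A^{\cU})$) that you spell out. Your write-up just makes explicit the formal bookkeeping the authors leave implicit.
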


\begin{proof}
This is immediate
 by Proposition~\ref{P.C*-completeness},  Proposition~\ref{P.vNA-completeness}
and Proposition~\ref{P.ultrapowers}.
\end{proof}

It is worth remarking that although the proof of Proposition
\ref{P.ultrapowers} is straightforward, this relies on a judicious
choice of domains of quantification.  In general, it is not true
that if one defines domains for a metric structure then the domains
have the intended or standard interpretation in the ultraproduct.
Von Neumann algebras themselves are a case in point.  If we had
defined our domains so that $D_n$ were those operators with
$l_2$-norm less than or equal to $n$ then there would be several
problems.  The most glaring is that these domains are not complete;
even if one persevered to an ultraproduct, the resulting object
would contain unbounded operators.

Ward Henson has pointed out to us that this same problem with
domains manifests itself in pointed ultrametric spaces.  If one defines domains as closed balls of radius $n$ about
the base point, there is no reason to expect that the domains in
an ultraproduct will also be closed balls.  This unwanted phenomenon can be avoided by imposing a geodesic-type condition on the underlying metric; see for instance \cite[Section 1.8]{Carlisle}.

The following is \L o\'s's theorem,
 also known as the Fundamental Theorem of ultraproducts (see \cite[Theorem~5.4]{BYBHU}).
 It is proved by chasing the definitions.

\begin{proposition}\label{P.Los} Let $\CM_i$, $i\in \bbN$, be a sequence of models
of language $\CL$, $\cU$ be an ultrafilter on $\bbN$ and $\CN=
\prod_\cU \CM_i$.
\begin{enumerate}
\item If $\phi$ is an $\CL$-sentence then $\phi^{\CN}=\lim_{i\to \cU} \phi^{\CM_i}$.
\item If $\phi$ is an $\CL$-formula then $\phi^{\CN}(\bfa) =\lim_{i\to \cU} \phi^{\CM_i}(a_i)$,
where $(a_i\colon i\in \bbN)$ is a representing sequence of $\bfa$.
\item The diagonal embedding of a model $\CM$ into $\CM^{\cU}$ is  elementary.
\end{enumerate}
\end{proposition}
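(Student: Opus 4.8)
The plan is to prove all three items simultaneously by induction on the complexity of $\CL$-formulas, since (3) is an immediate consequence of (2) applied to the diagonal sequence $(a,a,a,\dots)$ together with the observation that the diagonal embedding is well-defined (constant sequences land in domains by the covering condition on $\CM(S)$). So the heart of the matter is (2), and (1) is the case of a sentence (no free variables). First I would set up the induction over terms: a closed term built from function symbols is interpreted coordinatewise, and the fact that each $f^{\CM}$ is uniformly continuous with a fixed modulus on each choice of domains $\bar D$ guarantees that the coordinatewise interpretation descends to the quotient $S^{\CM'}$ and computes the interpretation of the term in $\CN$; this is exactly the ``interpreted in the natural way'' clause in the definition of $\prod_\cU \CM_i$. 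The base case of the formula induction is then a basic formula $R(\tau_1,\dots,\tau_n)$: restricting the tuple to the relevant domains $\bar D$, the values $R^{\CM_i}(\tau_1^{\CM_i}(a_i),\dots)$ are uniformly bounded by $N^R_{\bar D}$, so $\lim_{i\to\cU}$ exists, and uniform continuity of $R$ (applied to the already-established convergence of the term values) shows this limit depends only on the $\sim$-classes; by construction this limit is the interpretation $R^{\CN}$.

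For the inductive step with a continuous connective $f\colon\bbR^n\to\bbR$, I would use that $f$ restricted to a compact box (where the $\varphi_j^{\CM_i}(a_i)$ live, again by the boundedness-on-domains bound from Lemma~\ref{L.formula}) is uniformly continuous, so $f(\lim_\cU \varphi_1^{\CM_i}(a_i),\dots) = \lim_\cU f(\varphi_1^{\CM_i}(a_i),\dots)$, which is $\lim_\cU (f(\bar\varphi))^{\CM_i}(a_i)$. The quantifier step is where the real work lies: for $\psi = \sup_{x\in D}\varphi$, one inequality, namely $\psi^{\CN}(\bfa)\ge \lim_\cU \psi^{\CM_i}(a_i)$ (equivalently $\le$ for $\inf$), is easy — given $\e>0$ pick witnesses $b_i\in D^{\CM_i}$ with $\varphi^{\CM_i}(b_i,a_i)\ge \psi^{\CM_i}(a_i)-\e$, form $\bfb = [(b_i)]\in D^{\CN}$, and apply the induction hypothesis to get $\psi^{\CN}(\bfa)\ge \varphi^{\CN}(\bfb,\bfa) = \lim_\cU\varphi^{\CM_i}(b_i,a_i)\ge \lim_\cU\psi^{\CM_i}(a_i)-\e$. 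The reverse inequality $\psi^{\CN}(\bfa)\le \lim_\cU\psi^{\CM_i}(a_i)$ is the main obstacle: given any $\bfb\in D^{\CN}$ represented by $(b_i)$ with $b_i\in D^{\CM_i}$ for $\cU$-almost all $i$, one has $\varphi^{\CM_i}(b_i,a_i)\le \psi^{\CM_i}(a_i)$ on that large set, so $\varphi^{\CN}(\bfb,\bfa) = \lim_\cU\varphi^{\CM_i}(b_i,a_i)\le \lim_\cU\psi^{\CM_i}(a_i)$; taking the sup over $\bfb\in D^{\CN}$ gives the claim. The only subtlety to check carefully is that every element of $D^{\CN}$ does admit a representing sequence lying in $D^{\CM_i}$ for $\cU$-almost all $i$ — but this is precisely how $D^{\CM'}$ was defined in the ultraproduct construction, so no genuine difficulty arises; it is just a matter of not confusing $D^{\CN}$ with the interpretation of $D$ in some other model.

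Finally I would note that part (3) follows from (2) with $a_i = a$ for all $i$: every formula $\psi$ with parameters from $\CM$ satisfies $\psi^{\CM^\cU}(\text{diag}(a)) = \lim_\cU \psi^{\CM}(a) = \psi^{\CM}(a)$, which is the definition of the diagonal embedding being elementary. The proof is, as stated in the excerpt, a matter of chasing definitions; the single point that deserves care is the $\le$-direction of the quantifier case, and there the definition of $D^{\CM'}$ is exactly what makes it go through. \qed
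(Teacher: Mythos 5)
Your proof is correct and is exactly the definition-chasing induction on formula complexity that the paper has in mind (the paper itself gives no details, simply asserting that the result ``is proved by chasing the definitions'' and citing the analogous theorem in Ben~Yaacov et al.). Both directions of your quantifier step and the treatment of domains in the ultraproduct are handled correctly, so nothing further is needed.
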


Together with the axiomatizability
(Propositions~\ref{P.C*-completeness} and \ref{P.vNA-completeness})
and Proposition~\ref{P.ultrapowers}, this implies   the  well-known
fact that the ultraproduct of C*-algebras
 (tracial von Neumann algebras, II$_1$ factors, respectively) is a C*-algebra
 (tracial von Neumann algebra, II$_1$ factor, respectively).

In the setting of tracial von Neumann algebras, we have that for any
formula $\phi(x_1,\ldots,x_n)$ with variables from the algebra sort
there is a uniform continuity modulus $\delta$ such that for every
tracial von Neumann algebra $\CM$, $\phi$ defines a function $g$ on
the operator norm unit ball of $\CM$ which is uniformly continuous
with respect to $\delta$ and naturally extends to the operator norm
unit ball of any ultrapower of $\CM$.

In \cite{FaHaSh:Model1} we dealt with functions $g$ satisfying the
properties in the previous paragraph and used them to define a linear
ordering showing that some ultrapowers and relative commutants are
nonisomorphic. Using model theory, we can interpret this in a more
general context
 and instead of `tracial von Neumann
algebra' consider $g$ defined with respect to any axiomatizable
class of operator algebras. Clearly, Lemma~\ref{L.formula} and
Proposition~\ref{P.Los} together imply the following, used in the
proof of Theorem~\ref{T0}.

\begin{corollary}\label{C.Properties.g}
If $\psi$ is an $n$-ary formula, then the function $g$ defined
to be the interpretation of $\psi$ on a tracial von Neumann algebra $M$ satisfies the following
\cite[Properties 2.1]{FaHaSh:Model1}:
\begin{enumerate}
\item [(G1)] $g$ defines a uniformly continuous function on the $n$-th power of the unit ball of $M$;
the uniform continuity does not depend on the particular algebra i.e. for every $\epsilon$ there is a $\delta$ independent of the choice of algebra;
\item [(G2)] For every ultrafilter $\cU$ the function $g$ can be canonically extended to
the $n$-th power of the unit ball of the ultrapower $(\ubM)^{\cU}=\ub{M^{\cU}}$. \qed
\end{enumerate}
\end{corollary}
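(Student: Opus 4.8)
The plan is to derive Corollary~\ref{C.Properties.g} directly from the two facts already established: Lemma~\ref{L.formula}, which controls the interpretation of an arbitrary formula on a single model, and Proposition~\ref{P.Los}, which controls how formulas transfer to ultrapowers. Since $\psi(\vec x,\vec y)$ is a $2n$-ary formula in $\CL_{\Tr}$ all of whose free variables come from the algebra sort $U$, the natural choice of domains is $\bar D = \langle D_1,\ldots,D_1\rangle$ ($2n$ copies), which by Proposition~\ref{P.vNA-completeness} is interpreted in $\CM(M)$ as the operator-norm unit ball of $M$. So I would first record that $g := \psi^{\CM(M)}$ is exactly the function ``$\psi$ evaluated on $2n$-tuples of unit-ball elements of $M$.''

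For (G1): apply Lemma~\ref{L.formula} with this $\bar D$. It immediately gives that $\psi^{\CM(M)}$ is a uniformly continuous function on $M(\bar D) = (\ubM)^{2n}$ taking values in a compact (in particular bounded) subset of $\bbR$. Moreover the modulus of uniform continuity is determined by the uniform continuity moduli attached to the function and relation symbols occurring in $\psi$ together with the choice $\bar D = \langle D_1,\ldots,D_1\rangle$, and hence is the same for every tracial von Neumann algebra $M$; this is the point of the paragraph preceding the corollary and is what lets us speak of ``the'' function $g$ attached to $\psi$ uniformly across the class. This disposes of (G1).

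For (G2): by Proposition~\ref{P.ultrapowers} we have $\CM(M^{\cU}) = \CM(M)^{\cU}$, and by Proposition~\ref{P.vNA-completeness} applied to $M^{\cU}$ the domain $D_1$ is interpreted in $\CM(M^{\cU})$ as the operator-norm unit ball of $M^{\cU}$, i.e.\ $\ub{M^{\cU}} = (\ubM)^{\cU}$. Now Proposition~\ref{P.Los}(2) says that for $\bfa \in ((\ubM)^{\cU})^{2n}$ with representing sequence $(a_i)$ (each $a_i$ in the $2n$-th power of the unit ball of $M$), one has $\psi^{\CM(M^{\cU})}(\bfa) = \lim_{i\to\cU} \psi^{\CM(M)}(a_i) = \lim_{i\to\cU} g(a_i)$. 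This is precisely the canonical extension of $g$ to the unit ball of the ultrapower, and by Proposition~\ref{P.Los}(3) it restricts back to $g$ on the diagonal copy of $M$. Hence $g$ extends canonically as required, proving (G2).

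The only genuine subtlety — and the step I would be most careful about — is verifying that the chosen domains $D_1^{2n}$ really are the operator-norm unit ball both in $\CM(M)$ and, crucially, in the ultrapower $\CM(M)^{\cU}$; this is exactly the ``judicious choice of domains'' issue flagged in the remarks after Proposition~\ref{P.ultrapowers}, but it has already been settled by Propositions~\ref{P.vNA-completeness} and~\ref{P.ultrapowers}, so in the end the corollary is, as stated, an immediate combination of Lemma~\ref{L.formula} and Proposition~\ref{P.Los}, and the proof reduces to citing them with the right domain tuple. Accordingly I would keep the written proof to a couple of sentences, or simply mark it \qed as the statement of the corollary already does.
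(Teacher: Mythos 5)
Your proof is correct and follows exactly the route the paper intends: the paper justifies this corollary with the single sentence that Lemma~\ref{L.formula} and Proposition~\ref{P.Los} together imply it, which is precisely your combination of Lemma~\ref{L.formula} (with $\bar D = \langle D_1,\dots,D_1\rangle$) for (G1) and Proposition~\ref{P.Los} together with Proposition~\ref{P.ultrapowers} for (G2). Your extra care about the domains $D_1^{2n}$ being the operator-norm unit ball in both $\CM(M)$ and its ultrapower is a correct and worthwhile elaboration of what the paper leaves implicit.
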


\subsection{Downwards L\"owenheim--Skolem Theorem}
The cardinality of the language and the number of formulas are crude
measures of the L\"owenheim-Skolem cardinal for continuous logic. We
define a topology on formulas relative to a given continuous theory
in order to give a better measure.

Suppose $T$ is a continuous theory in a language $\CL$.  Fix
variables $\bar x = x_1\ldots x_n$ and domains $\bar D = D_1\ldots
D_n$ consistent with the sorts of the $x$'s.  For formulas $\varphi$ and $\psi$ defined on $\bar D$, set
\[
d^T_{\bar D}(\varphi(\bar x),\psi(\bar x)) = \sup\{\sup_{\bar x
\in (\bar D^\CM)^n} |\varphi(\bar x) - \psi(\bar x)| : \CM \models T \}.
\]
Now $d^T_{\bar D}$ is a pseudo-metric; let $\chi(T,\bar D)$ be the
density character of this pseudo-metric on the formulas in the
variables $\bar x$ and define the density character of $\CL$ with
respect to $T$, $\chi(T)$, as $\sum_{\bar D} \chi(T,\bar D)$.

We will say that $\CL$ is \textit{separable} if the density character of
$\CL$ is countable with respect to all $\CL$-theories. Note that the
languages considered in this paper, in particular  $\Lang_{\Tr}$ and
$\Lang_{C^*}$, are separable.

\begin{proposition} \label{P.Separable.language}
Assume $\Lang$ is a separable language. Then for every model $\CM$
of $\Lang$ the set of all interpretations of formulas of $\Lang$ is
separable in the uniform topology.
\end{proposition}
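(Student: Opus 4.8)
The plan is to reduce the assertion about interpretations of formulas in a fixed model to the definition of $\chi(T)$ applied to $T=\Th(\CM)$. Given a model $\CM$ of $\Lang$, let $T = \Th(\CM)$, so $\CM \models T$. For each choice of free variables $\bar x = x_1\ldots x_n$ and domains $\bar D = D_1\ldots D_n$, the interpretation map $\varphi(\bar x) \mapsto \varphi^{\CM}\restriction M(\bar D)$ sends formulas to bounded uniformly continuous real-valued functions on $M(\bar D)$ (this is Lemma~\ref{L.formula}), and we equip the target with the uniform (sup) metric $\|\varphi^{\CM} - \psi^{\CM}\|_\infty = \sup_{\bar a \in M(\bar D)} |\varphi^{\CM}(\bar a) - \psi^{\CM}(\bar a)|$. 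The point is that this map is $1$-Lipschitz from $(\text{formulas},\, d^T_{\bar D})$ to $(C_b(M(\bar D)),\, \|\cdot\|_\infty)$: indeed $\sup_{\bar a \in M(\bar D)}|\varphi^{\CM}(\bar a)-\psi^{\CM}(\bar a)| \leq \sup\{\sup_{\bar a \in (D^{\CN})^n}|\varphi^{\CN}(\bar a)-\psi^{\CN}(\bar a)| : \CN \models T\} = d^T_{\bar D}(\varphi,\psi)$, since $\CM$ is one of the models $\CN$ in the supremum.

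Next I would invoke separability of $\Lang$: since $\chi(T,\bar D)$ is countable for every $\bar D$, the pseudo-metric space of formulas in $\bar x$ under $d^T_{\bar D}$ has a countable dense subset $\Phi_{\bar D}$. A $1$-Lipschitz image of a space with countable density character again has countable density character, so $\{\varphi^{\CM}\restriction M(\bar D) : \varphi \text{ a formula in } \bar x\}$ is separable in $\|\cdot\|_\infty$, with $\{\varphi^{\CM}\restriction M(\bar D) : \varphi \in \Phi_{\bar D}\}$ a countable dense subset. Finally, the set of all interpretations of formulas of $\Lang$ in $\CM$ is the union, over the countably many choices of $(\bar x, \bar D)$ coming from a countable language, of these pieces — here one uses that a countable union of separable spaces is separable, together with the fact that a formula's interpretation lives on $M(\bar D)$ for some specified $\bar D$ and that the uniform topology on the whole collection restricts appropriately to each piece. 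Taking the union of the countable dense sets $\Phi_{\bar D}$ (interpreted in $\CM$) gives a countable dense set in the uniform topology on all interpretations of $\Lang$-formulas in $\CM$.

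The main obstacle is bookkeeping rather than mathematical depth: one must be careful about what "separable in the uniform topology" means for the full collection of interpreted formulas of varying arities and domains, so that the decomposition into the countably many pieces indexed by $(\bar x,\bar D)$ is legitimate and the uniform topology is the one induced by comparing formulas on their common domains. The only genuine input beyond definition-chasing is Lemma~\ref{L.formula} (to know interpretations are bounded uniformly continuous functions, so the sup-metric is finite) and the observation that $\CM\models T$ makes the interpretation map a contraction for $d^T_{\bar D}$; everything else is the elementary fact that Lipschitz images and countable unions preserve separability.
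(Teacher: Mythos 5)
Your argument is correct, but it is not the route the paper takes. You read the proposition off the definitions: since $\CM\models\Th(\CM)$, the evaluation map sending a formula $\varphi$ to the function $\varphi^{\CM}$ on $M(\bar D)$ is nonexpansive from the pseudo-metric $d^{\Th(\CM)}_{\bar D}$ to the sup-metric, so the countable dense sets of formulas guaranteed by separability of $\Lang$ push forward to countable dense sets of interpretations, and one sums over the tuples $\bar D$. (One small point to make explicit: you justify "countably many choices of $(\bar x,\bar D)$" by saying the language is countable, but the hypothesis is only that it is separable; the correct justification is that each $\chi(T,\bar D)\geq 1$, so countability of $\chi(T)=\sum_{\bar D}\chi(T,\bar D)$ already forces the index set of tuples $\bar D$ to be countable.) The paper instead sketches a Stone--Weierstrass argument: although the set of formulas has the cardinality of the continuum (every continuous $f\colon\bbR^n\to\bbR$ is admitted as a connective), the formulas built using only polynomials with rational coefficients as connectives are uniformly dense, because by Lemma~\ref{L.formula} interpretations take values in compact subsets of $\bbR$. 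The two arguments buy different things. Yours is the cleaner deduction of the proposition as literally stated, since it uses the separability hypothesis directly and nothing else; but it takes that hypothesis as a black box. The Stone--Weierstrass argument is really the content behind the sentence immediately preceding the proposition --- that concrete languages such as $\Lang_{C^*}$ and $\Lang_{\Tr}$, with countably many symbols, sorts and domains but uncountably many connectives, are separable in the first place --- which is the nontrivial fact your proof does not address.
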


\begin{proof} Since we are allowing all continuous
real functions as propositional connectives (\S\ref{SS.Language})
the set of formulas is not countable. However, a straightforward
argument using  polynomials with rational coefficients and the
Stone--Weierstrass theorem gives a proof.
\end{proof}

The following is a version of the downward L\"owenheim--Skolem
theorem (cf.  \cite[Proposition~7.3]{BYBHU}). Some of its instances
have been rediscovered and applied  in the context of C*-algebras
(see, e.g.,  \cite{Phi:Simple} or the discussion of SI properties
in  \cite{Black:Operator}).  We use the notation $\chi(X)$ to represent
the density character of a set $X$ in some ambient topological
space.

\begin{theorem} Suppose that $\CM$ is a metric structure and $X
\subseteq M$.  Then there is $\CN \prec \CM$ such that $X \subseteq
N$ and $\chi(\CN) \leq \chi(\Th(\CM)) + \chi(X)$.
\end{theorem}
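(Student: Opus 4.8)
The plan is to run the classical downward L\"owenheim--Skolem construction --- close $X$ under a family of approximate Skolem functions, iterate $\omega$ times, and pass to the substructure generated by the result --- and to verify elementarity by a continuous Tarski--Vaught test; the metric features of the logic enter only through Lemma~\ref{L.formula}. Concretely, I would first record the criterion: if $\CN\subseteq\CM$ is a substructure, meaning each $N(S)$ is a subset of $S^\CM$ that is closed under all function symbols and covered by the sets $D^\CM\cap N$ ($D\in\CD_S$), with each $D^\CN=D^\CM\cap N$ complete, then $\CN\prec\CM$ provided
\[
\inf_{b\in D^\CM\cap N}\varphi^\CM(\bar a,b)=\inf_{b\in D^\CM}\varphi^\CM(\bar a,b)
\]
for every $\CL$-formula $\varphi(\bar x,y)$, every domain $D$ consistent with $y$, and every tuple $\bar a$ from $N$ in appropriate domains. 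This follows by showing $\psi^\CN=\psi^\CM$ on tuples from $N$ by induction on the construction of $\psi$, exactly as in the discrete case: for basic formulas the two values agree because $\CN$ is a substructure; for a connective $f(\psi_1,\dots,\psi_k)$ one invokes the inductive hypothesis and continuity of $f$, all the values lying in a compact subset of $\bbR$ by Lemma~\ref{L.formula}; and the $\inf$ and $\sup$ quantifier cases are, respectively, the displayed identity and its mirror image (obtained by applying the identity to $-\varphi$).

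For the construction, put $T=\Th(\CM)$ and $\kappa=\chi(T)+\chi(X)$; as the sum defining $\chi(T)$ ranges over infinitely many domain tuples, $\kappa\ge\aleph_0$, so $\kappa$ absorbs countable increments below. For each finite tuple of variables and each consistent domain tuple $\bar D$ fix a $d^T_{\bar D}$-dense set of formulas of size $\le\chi(T)$; their union, the \emph{basic} formulas, has size $\le\kappa$. Build $X=X_0\subseteq X_1\subseteq\cdots\subseteq M$ with $\chi(X_n)\le\kappa$ as follows: from $X_n$ choose a dense $Y_n\subseteq X_n$ of size $\le\kappa$ with $Y_n\cap D^\CM$ dense in $X_n\cap D^\CM$ for each of the $\le\kappa$ domains $D$; then let $X_{n+1}$ adjoin to $X_n$ both (i) $f^\CM(\bar c)$ for every function symbol $f$ and every tuple $\bar c$ from $Y_n$ in $\dom(f^\CM)$, and (ii) for every basic $\varphi(\bar x,y)$, every domain $D$, and every tuple $\bar a$ from $Y_n$ in the relevant domains, a sequence $b_0,b_1,\dots\in D^\CM$ with $\varphi^\CM(\bar a,b_k)\to\inf_{b\in D^\CM}\varphi^\CM(\bar a,b)$. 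At most $\kappa$ points are added at each stage, so $\chi(X_{n+1})\le\kappa$. Put $N_0=\bigcup_nX_n$ and let $\CN$ be the substructure of $\CM$ generated by $N_0$ --- each $S^\CN$ the metric closure in $S^\CM$ of the closure of $N_0\cap S^\CM$ under the term functions, with $D^\CN=D^\CM\cap S^\CN$. Then $\CN$ is a structure in the sense of \S\ref{S.Logic}: each $D^\CN$ is closed in the complete space $D^\CM$, hence complete; the $D^\CN$ cover $S^\CN$ because the $D^\CM$ cover $S^\CM$; and the uniform continuity moduli and bounds are inherited from $\CM$. Moreover $X\subseteq N$, and a routine count using continuity of the term functions gives $\chi(\CN)\le\kappa$.

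It remains to verify the Tarski--Vaught identity for $\CN$. Fix $\varphi(\bar x,y)$, a domain $D$, a tuple $\bar a$ from $N$, and $\e>0$. Pick a basic formula $\psi$ with $d^T_{\bar D}(\varphi,\psi)<\e$ for the relevant domain tuple $\bar D$, and a tuple $\bar a'$ from some $Y_n$ in the same domains with $d_S(\bar a',\bar a)$ small enough that $|\varphi^\CM(\bar a',b)-\varphi^\CM(\bar a,b)|<\e$ for all $b\in D^\CM$ (uniform continuity, Lemma~\ref{L.formula}). Up to error $O(\e)$ this reduces the identity to the pair $\psi,\bar a'$, and for these step (ii) placed witnesses $b_k\in D^\CM\cap N$ with $\psi^\CM(\bar a',b_k)\to\inf_{b\in D^\CM}\psi^\CM(\bar a',b)$; since the reverse inequality $\inf_{b\in D^\CM\cap N}\ge\inf_{b\in D^\CM}$ is trivial, the two infima agree. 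Letting $\e\to 0$ yields the identity, so $\CN\prec\CM$ by the criterion above, with $\chi(\CN)\le\kappa=\chi(\Th(\CM))+\chi(X)$.

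The only real difficulty is the bookkeeping around domains: one must check that shrinking to the dense sets $Y_n$ and then taking metric closures in the definition of $\CN$ costs nothing --- that witnesses landing in $D^\CM$ still land in $D^\CN$, that tuples of $N$ lying in a domain tuple $\bar D^\CN$ are approximable there by tuples from $\bigcup_nY_n$, and that the density-character bound survives all of these closures. Everything here rests on the structural fact built into \S\ref{S.Logic} and isolated in Lemma~\ref{L.formula}, that the interpretation of a formula restricted to a fixed tuple of domains is uniformly continuous into a compact subset of $\bbR$; it is precisely the requirement that domains be complete and that relations be bounded and uniformly continuous on them --- the same point flagged in the discussion after Proposition~\ref{P.ultrapowers} --- that makes the argument go through.
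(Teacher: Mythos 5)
Your proof is correct and follows essentially the same route as the paper's: iterate $\omega$ times the adjunction of approximate $\inf$-witnesses for a $d^T_{\bar D}$-dense family of formulas over dense subsets of each stage, take the closure, and conclude by a continuous Tarski--Vaught test. You spell out the substructure generation, the domain bookkeeping, and the two approximation steps (formula by a basic one, tuple by one from $Y_n$) that the paper compresses into ``it is routine to check,'' but the underlying argument is the same.
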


\begin{proof} Fix $\cF$, a dense set of formulas, witnessing $\chi(\Th(\CM))$.
Define two increasing sequences $\langle X_n : n \in \bbN
\rangle$ and $\langle E_n : n \in \bbN \rangle$ of subsets of $M$
inductively so that:
\begin{enumerate}
\item $X_0 = X$;
\item $E_n$ is dense in $X_n$ and $\chi(X_n) = |E_n|$ for all $n \in
\bbN$;
\item $\chi(X_n) \leq \chi(\Th(\CM)) + \chi(X)$; and,
\item for every rational number $r$, formula $\varphi(x,\bar y) \in
\cF$, domain $D$ in the sort of the variable $x$ and $\bar a =
a_1,\ldots,a_k \in E_n$ where $k$ is the  length of $\bar y$, if $\CM \models
\inf_{x \in D} \varphi(x,\bar a) \leq r$ then for every $n>0$ there is $b \in
X_{n+1} \cap D(\CM)$ such that $\CM \models \varphi(b,\bar a)\leq r+(1/n)$.
\end{enumerate}
It is routine to check that $\overline{\cup_{n \in \bbN} X_n}$ is the universe
of an elementary submodel $\CN \prec \CM$ having the correct density character.
\end{proof}

\begin{corollary} \label{P.LST} Assume $\Lang$ is separable.
If $\CM$ is a model of $\Lang$ and $X$ is an infinite  subset of
its universe, then $\CM$ has an elementary submodel whose 
density character is not greater than that of $X$ and whose universe
contains $X$.
\end{corollary}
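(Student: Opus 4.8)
The plan is to deduce this immediately from the downward L\"owenheim--Skolem theorem proved just above; the only content is that separability of $\Lang$ collapses the term $\chi(\Th(\CM))$ in the bound $\chi(\CN)\le\chi(\Th(\CM))+\chi(X)$.

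First I would observe that $\Th(\CM)$ is, in particular, an $\Lang$-theory, so the hypothesis that $\Lang$ is separable gives $\chi(\Th(\CM))\le\aleph_0$ directly from the definition of separability of a language (recall $\chi(T)=\sum_{\bar D}\chi(T,\bar D)$, which separability requires to be countable). Second, since $X$ is infinite, its density character satisfies $\chi(X)\ge\aleph_0$: a finite subset of a metric space is closed and hence equals its own closure, so no finite set can be dense in an infinite one. Combining these two facts with the elementary cardinal identity $\kappa+\lambda=\lambda$ whenever $\kappa\le\aleph_0\le\lambda$ yields $\chi(\Th(\CM))+\chi(X)=\chi(X)$.

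Finally I would apply the preceding Theorem to the structure $\CM$ and the set $X$, obtaining $\CN\prec\CM$ with $X\subseteq N$ and $\chi(\CN)\le\chi(\Th(\CM))+\chi(X)$, and then rewrite the right-hand side as $\chi(X)$ by the previous paragraph. This $\CN$ is the required elementary submodel.

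There is essentially no obstacle. The only points requiring a moment's care are bookkeeping: matching the two notions of density character, so that the $\chi(X)$ in the statement (the density character of $X$ as a subspace of the metric space underlying $\CM$) is literally the quantity fed into the bound of the Theorem; and recording the trivial remark that an infinite metric space has density character at least $\aleph_0$ --- the hypothesis that $X$ is infinite is used \emph{only} for this, and without it the assertion would fail for finite $X$.
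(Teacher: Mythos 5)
Your argument is correct and is exactly the intended one: the paper states this as a corollary of the downward L\"owenheim--Skolem theorem with no separate proof, the point being precisely that separability of $\Lang$ makes $\chi(\Th(\CM))$ countable and that $\chi(X)\ge\aleph_0$ for infinite $X$, so the bound $\chi(\Th(\CM))+\chi(X)$ collapses to $\chi(X)$.
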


\subsection{Types} \label{S.Types}
Suppose that $\CM$ is a model in a language $\CL$, $A \subseteq M$
and $\bar x$ is a tuple of free variables thought of as the type
variables.

We follow \cite[Remark 3.13]{BYBHU} and say that a \emph{condition}
over $A$ is an expression of the form $\varphi(\bar x,\bar a) \leq
r$ where $\varphi \in \CL$, $\bar a \in A$ and $r \in \bbR$.  If
$\CN \succ \CM$ and $\bar b \in N$ then $\bar b$ satisfies
$\varphi(\bar x,\bar a) \leq r$ if $\CN$ satisfies $\varphi(\bar
b,\bar a) \leq r$.

Fix a tuple of domains $\bar D$ consistent with $\bar x$, i.e., if
$x_i$ is of sort $S$ then $D_i$ is a domain in $S$. A set of
conditions over $A$ is called a $\bar D$-type over $A$. A $\bar
D$-type is {\em consistent} if for every finite $p_0 \subseteq p$
and $\epsilon > 0$ there is $\bar b \in \bar D(M)$ such that if
$``\varphi(\bar x,\bar a) \leq r" \in p_0$ then $M$ satisfies
$\varphi(\bar b,\bar a) \leq r + \epsilon$. We say that a $\bar
D$-type $p$ over $A$ is {\em realized} in $\CN \succ \CM$ if there
is $\bar a \in \bar D(N)$ such that $\bar a$ satisfies every
condition in $p$.  The following proposition links these two
notions:

\begin{proposition} \label{L.realizing-type} The following are equivalent:
\begin{enumerate}
\item $p$ is consistent.
\item $p$ is realized in some $\CN \succ \CM$.
\item $p$ is realized in an ultrapower of $\CM$.
\end{enumerate}
\end{proposition}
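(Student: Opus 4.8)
The plan is to prove the three equivalences by a cycle: (3) $\Rightarrow$ (2) is immediate since an ultrapower $\CM^\cU$ is an elementary extension of $\CM$ by Proposition~\ref{P.Los}(3); (2) $\Rightarrow$ (1) is a direct unwinding of definitions, using the fact that a realization $\bar b \in \bar D(\CN)$ of $p$ in an elementary extension $\CN \succ \CM$ witnesses consistency of each finite $p_0 \subseteq p$ back inside $\CM$ (here one uses that $\CM \prec \CN$ to pull the approximate satisfaction of the finitely many conditions, with $\epsilon$ slack, down into $\bar D(M)$). So the substantive implication is (1) $\Rightarrow$ (3): from consistency of $p$ we must produce a single ultrapower of $\CM$ realizing all of $p$ simultaneously.

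The approach for (1) $\Rightarrow$ (3) is the standard ultrafilter-on-an-index-set argument adapted to the metric setting. Let $p$ be a consistent $\bar D$-type over $A \subseteq M$, and let $I$ be the set of pairs $(p_0, \epsilon)$ where $p_0 \subseteq p$ is finite and $\epsilon > 0$ rational; order $I$ by $(p_0,\epsilon) \leq (p_1,\delta)$ iff $p_0 \subseteq p_1$ and $\delta \leq \epsilon$. For each $i = (p_0,\epsilon) \in I$, consistency of $p$ gives $\bar b_i \in \bar D(M)$ with $M \models \varphi(\bar b_i, \bar a) \leq r + \epsilon$ for every ``$\varphi(\bar x, \bar a) \leq r$'' $\in p_0$. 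For each $i \in I$ let $U_i = \{ j \in I : j \geq i\}$; since $I$ is directed the family $\{U_i : i \in I\}$ has the finite intersection property, so extend it to an ultrafilter $\cU$ on $I$. Form the ultrapower $\CM^\cU$ (indexed by $I$ rather than $\bbN$; the ultraproduct construction of \S\ref{S.Logic} works over any index set) and let $\bar b \in \CM^\cU$ be the element represented by $(\bar b_i : i \in I)$ — one checks $\bar b \in \bar D(\CM^\cU)$ since each $\bar b_i \in \bar D(M)$. Now fix a condition ``$\varphi(\bar x,\bar a) \leq r$'' $\in p$ and rational $\epsilon > 0$; for every $i \geq (\{\varphi(\bar x,\bar a)\leq r\}, \epsilon)$ we have $M \models \varphi(\bar b_i,\bar a) \leq r + \epsilon$, and this set of indices is in $\cU$, so by \L os's theorem (Proposition~\ref{P.Los}(2)) $\varphi^{\CM^\cU}(\bar b, \bar a) = \lim_{i\to\cU}\varphi^{M}(\bar b_i,\bar a) \leq r + \epsilon$. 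Letting $\epsilon \to 0$ gives $\varphi^{\CM^\cU}(\bar b,\bar a) \leq r$, so $\bar b$ realizes $p$ in $\CM^\cU$, which is an elementary extension of $\CM$ by Proposition~\ref{P.Los}(3) (identifying $A \subseteq M$ with its diagonal image).

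The only point requiring a little care — and the part I would flag as the main obstacle — is the interaction between the metric approximation inherent in ``consistent'' and the domain constraints. Concretely one must make sure that the chosen $\bar b_i$ genuinely live in $\bar D(M)$ (not merely approximately), which is fine because the definition of consistency quantifies over $\bar b \in \bar D(M)$; and one must verify that the limit along $\cU$ of the values $\varphi^M(\bar b_i,\bar a)$ exists and is bounded, which follows from Lemma~\ref{L.formula} since $\varphi^M$ takes values in a compact (hence bounded) subset of $\bbR$ when restricted to the relevant domains. A secondary subtlety is that Proposition~\ref{P.Los} as stated is for index set $\bbN$, but its proof ``by chasing the definitions'' goes through verbatim for an arbitrary index set, so we may invoke it over $I$; alternatively, if one insists on $\bbN$, one first reduces to a countable subtype using separability of the language (Corollary~\ref{P.LST}) and a cofinal $\omega$-sequence in $I$. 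Either way the argument is routine once the directed-system bookkeeping is set up, and the rest of the proof is purely a matter of bookkeeping.
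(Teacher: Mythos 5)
Your proof is correct and follows essentially the same route as the paper: the paper also dispatches (3)$\Rightarrow$(2)$\Rightarrow$(1) as immediate and proves (1)$\Rightarrow$(3) by indexing witnesses over finite subsets of $p\times\mathbb{R}_+$, taking an ultrafilter concentrating on the upward cones, and reading off the realization in the resulting ultrapower via \L os's theorem. Your version is if anything slightly more careful about the cone condition on the ultrafilter, the domain membership of the representing sequence, and the use of an index set other than $\bbN$ — all points the paper leaves implicit.
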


\proof 3) implies 2) and 2) implies 1) are clear.  To see that 1)
implies 3), let $F \subseteq p \times \mathbb{R}_+$ be a finite set, and let $\bar b_F \in
\bar D(M)$ satisfy $\varphi(\bar x,\bar a) \leq r + \epsilon$ for
every $(\varphi(\bar x,\bar a) \leq r,\epsilon) \in F$.  Let $\CU$
be a non-principal ultrafilter over $\CP_{fin}(p \times \mathbb{R}_+)$.  Then $p$
is realized by $(\bar b_F : F \in \CP_{fin}(p \times \mathbb{R}_+))/\CU$ in
$\CM^{\CU}$. \qed

A maximal consistent $\bar D$-type is called \textit{complete}. Let $S^{\bar
D}(A)$ be the set of all complete $\bar D$-types over~$A$.  In fact,
$p$ is a complete $\bar D$-type over $A$ iff $p$ is the set of all
conditions true for some $\bar a \in \bar D(\CN)$ where $\CN \succ
\CM$, by Lemma~\ref{L.realizing-type} and
Proposition~\ref{P.Los}.

\begin{notat} \label{Not.type}
Assume $p$ is a complete type over $A$ and $\phi(x,\bar a)$ is a
formula with parameters $\bar a$ in $A$. Since $p$ is consistent and
maximal, there is the unique real number $r=\sup\{s\in \bbR\colon$
the condition $\phi(x,\bar a)\leq s$ is in $p\}$. In this situation
we shall extend the notation by writing $ \phi(p,\bar a)=r$. We
shall also use expressions such as $|\phi(p,\bar a)-\phi(p,\bar
b)|>\e$.
\end{notat}

We will also often omit the superscript $\bar D$ when it either does
not matter or is implicit.

The set $S^{\bar D}(A)$ carries two topologies: the logic topology and the metric topology.

Fix $\varphi$, $\bar a \in A$ and $r \in \bbR$. A basic closed set
in the logic topology has the form $$\{p \in S^{\bar D}(A) :
\varphi(\bar x,\bar a) \leq r \in p\}$$  The compactness theorem
shows that this topology is compact and it is straightforward that
it is Hausdorff.

We can also put a metric on $S^{\bar D}(A)$ as follows: for $p,q \in
S^{\bar D}(A)$ define
\[
 d(p,q) = \inf\{ d(a,b) : \mbox{there is an }\CN \succ \CM,
\mbox{$a$ realizes $p$ and $b$ realizes $q$}\}.
 \]  The metric
topology is in general finer than the logic topology due to the
uniform continuity of formulas.

\begin{example} Let $\CM$ be a model corresponding to a tracial von Neumann algebra
 or a unital C*-algebra.
\begin{enumerate}
\item
 The \emph{relative commutant type} of $\CM$ is the type over $M$
 consisting of all conditions of the form
 \[
 d([a,x], 0) = 0
 \]
 for $a\in M$.
 \item \label{Ex.2} Another type over $\CM$ consists of all conditions of the form
 \[
 d(a,x) \geq \e
 \]
 for  $a\in M$ and a  fixed $\e>0$.
 \end{enumerate}
 \end{example}

 While  the relative commutant type is trivially realized by the center of $\CM$,
 the type described in~\eqref{Ex.2} is never realized in $\CM$.  However, the second type is sometimes consistent.
 For instance, if $\CM$ is an infinite dimensional C*-algebras then (2) is consistent.
 Hence not
every consistent type over $\CM$ is necessarily realized in $\CM$.



 \subsection{Saturation}  A model  $\CM$ of language $\CL$ is \emph{countably saturated}
if for every  countable subset $X$ of the universe of $\CM$,
every consistent type over $X$ is realized in $\CM$.
  More generally, if $\kappa$ is a cardinal then $\CM$ is \emph{$\kappa$-saturated} if
 for every subset $X$ of the universe of $\CM$ of
 cardinality $<\kappa$ every consistent type over $X$ is realized in $\CM$. We say that $\CM$ is \emph{saturated} if it is $\kappa$-saturated where $\kappa$ is the  density character of $\CM$.

 Thus countably saturated is the same as $\aleph_1$-saturated, where $\aleph_1$ is the least uncountable cardinal. The following is a version of a classical theorem of Keisler for the logic of metric structures.

 \begin{proposition} \label{P.ctbly.saturated} If $\CM_i$,
 for $i\in \bbN$,   are  models  of the same language
 and $\cU$ is a nonprincipal ultrafilter on $\bbN$ then the ultraproduct $\prod_{\cU} \CM_i$ is countably saturated.
  If $\CM$ is  separable then the relative commutant  of $\CM$ in $\CM^{\cU}$ is countably saturated.
 \end{proposition}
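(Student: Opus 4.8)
The plan is to prove the two assertions of Proposition~\ref{P.ctbly.saturated} separately, the first by a direct realization argument and the second by reducing to the first after massaging the relative commutant type.

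\medbreak
\noindent\textbf{Countable saturation of the ultraproduct.}
Let $\CN = \prod_{\cU} \CM_i$, let $X \subseteq N$ be countable, and let $p$ be a consistent $\bar D$-type over $X$. First I would enumerate $X = \{\bfa_k : k \in \bbN\}$ and fix representing sequences $(a_k^{(i)} : i \in \bbN)$ for each $\bfa_k$. Since the language has only countably many formulas up to the relevant pseudo-metric when $\Lang$ is separable --- but in general $p$ uses only countably many parameters and we may as well assume $p$ is a countable set of conditions, because consistency is a property of finite subsets and realizing all of $p$ reduces to realizing a cofinal sequence of finite approximations --- I would enumerate a cofinal family of pairs $(p_n,\e_n)$ where $p_n \subseteq p$ is finite and $\e_n \to 0$, with each condition of $p$ appearing in some $p_n$ and each $\e < \e_n$ eventually. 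By consistency, for each $n$ there is $\bfb_n \in \bar D(\CN)$ with $\varphi(\bfb_n,\bfa) \leq r + \e_n$ for every ``$\varphi(\bar x,\bar a)\leq r$'' in $p_n$. Now the key step: using \L os's theorem (Proposition~\ref{P.Los}) for each of the finitely many conditions in $p_n$, the set
\[
A_n = \{ i \in \bbN : b_n^{(i)} \in D^{\CM_i} \text{ and } \varphi^{\CM_i}(b_n^{(i)},a_j^{(i)}) \leq r + \e_n \text{ for all relevant } j\}
\]
belongs to $\cU$. Since $\cU$ is nonprincipal on $\bbN$, we may arrange $A_1 \supseteq A_2 \supseteq \cdots$ and $\bigcap_n A_n = \emptyset$ (shrinking if necessary), and define $c^{(i)} = b_n^{(i)}$ for $i \in A_n \setminus A_{n+1}$, with $c^{(i)}$ arbitrary in the domain for $i \notin A_1$. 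Let $\bfc = (c^{(i)})/\cU \in \bar D(\CN)$. For any condition ``$\varphi(\bar x,\bar a)\leq r$'' in $p$ and any $\delta > 0$, pick $n$ with the condition in $p_n$ and $\e_n < \delta$; then for all $i \in A_n$ (a $\cU$-large set) one has $\varphi^{\CM_i}(c^{(i)},a_j^{(i)}) \leq r + \delta$ by construction, so by \L os's theorem $\varphi^{\CN}(\bfc,\bfa) \leq r + \delta$. Letting $\delta \to 0$ shows $\bfc$ realizes $p$. Hence $\CN$ is countably saturated, i.e., $\aleph_1$-saturated.

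\medbreak
\noindent\textbf{Countable saturation of the relative commutant.}
Now suppose $\CM$ is separable and let $\cR = \CM' \cap \CM^{\cU}$ be the relative commutant (identifying $\CM$ with its diagonal copy via Proposition~\ref{P.Los}(3)). Let $X \subseteq \cR$ be countable and $q$ a consistent $\bar D$-type over $X$. The strategy is to transport $q$ to a type $q'$ over $X \cup M$ in $\CM^{\cU}$ by adjoining, for every $\bfa$ in a countable dense subset $M_0$ of $M$, the conditions ``$d([\bfa,x],0) \leq 1/k$'' for all $k$ (this is legitimate since $M$ is separable and commuting with a dense set forces membership in $\cR$ by uniform continuity of the commutator term on the unit ball). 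I would then check that $q'$ is a consistent type over the countable set $X \cup M_0$: any finite fragment of $q'$ involves finitely many commutator conditions with parameters from $M_0$ and finitely many conditions from $q$; since $q$ is realized in some elementary extension of $\CM^{\cU}$ by an element of the relative commutant --- more carefully, since $q$ is consistent as a type over the subset $X$ of $\CM^{\cU}$ and the witnesses can be taken in the relative commutant because $q$ was given as consistent \emph{over $\cR$}, meaning its finite fragments are approximately realized by elements of $\cR \subseteq \CM^{\cU}$ --- those same witnesses approximately satisfy the finitely many commutator conditions exactly (they lie in $\cR$). Thus $q'$ is a consistent type over a countable subset of $\CM^{\cU}$, which by the first part is realized by some $\bfc \in \CM^{\cU}$. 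The commutator conditions force $\bfc \in \cR$, and the conditions from $q$ force $\bfc$ to realize $q$ in $\cR$. Therefore $\cR$ is countably saturated.

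\medbreak
\noindent\textbf{Main obstacle.}
The routine part is the \L os's-theorem bookkeeping in the first half. The subtle point, and the one I would be most careful about, is the precise meaning of ``consistent type over $\cR$'' in the second half and the claim that finite fragments of $q$ (and hence of $q'$) are \emph{approximately realized by elements of the relative commutant}, not merely by elements of $\CM^{\cU}$: one must verify that the consistency witnesses can be chosen inside $\cR$ so that the exact commutator conditions in $q'$ are genuinely satisfiable up to the required $\e$. This hinges on the fact that the commutator term $[a,x]$ is interpreted by a term in the language of tracial von Neumann algebras (or C*-algebras) that is uniformly continuous on the unit ball with a modulus independent of the algebra --- a point recorded in the discussion following Proposition~\ref{P.Los} --- so that commuting with a countable dense subset of $M$ is equivalent to commuting with all of $M$, and separability of $\CM$ is exactly what makes ``$X \cup M_0$'' countable and the reduction to the first part legitimate.
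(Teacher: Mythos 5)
Your first half---the diagonalization for $\prod_{\cU}\CM_i$---is essentially the argument the paper has in mind (its entire proof is the phrase ``a straightforward diagonalization argument''), and it is correct: reduce to a countable increasing family of finite fragments, pick approximate witnesses, shrink the \L os\ sets to a decreasing sequence with empty intersection, and splice the representing sequences. The only point to tighten is the reduction to countably many conditions, which really uses separability of the language (Proposition~\ref{P.Separable.language}) rather than ``cofinality of finite approximations''; this is the standing hypothesis under which the proposition is applied in the paper.

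The second half has a genuine gap. You treat the conditions of $q$ as if their truth values were the same whether the formulas are interpreted in the relative commutant $\CM'\cap\CM^{\cU}$ or in the ambient ultrapower $\CM^{\cU}$. That is true for quantifier-free formulas, but not in general: a quantifier $\sup_{y\in D_1}$ or $\inf_{y\in D_1}$ occurring in $\varphi$ ranges over the unit ball of the relative commutant when $\varphi$ is evaluated there, and over the much larger unit ball of $\CM^{\cU}$ when $\varphi$ is evaluated in the ultrapower; the relative commutant is a subalgebra of $\CM^{\cU}$ but not an elementary submodel. This breaks your reduction in both directions. First, a finite fragment of $q$ approximately realized \emph{in the relative commutant} by some $\bfb$ need not remain approximately realized when the same formulas are recomputed in $\CM^{\cU}$, so the consistency of $q'$ does not follow (consider a condition of the form $\sup_{y\in D_1}\psi(x,y,\bar a)\leq r$). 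Second, a realization $\bfc$ of $q'$ in $\CM^{\cU}$ satisfying the commutator conditions need not satisfy the conditions of $q$ once they are re-evaluated in the relative commutant (consider $\inf_{y\in D_1}\psi(x,y,\bar a)\leq r$). Your ``main obstacle'' paragraph worries about whether the consistency witnesses lie in the commutant, but the real issue is one level deeper: even for witnesses that do lie there, the \emph{values} of quantified formulas change. Repairing this requires either carrying out the diagonalization directly on representing sequences, handling quantifiers over the commutant via the approximate commutants $\{x:\ d([a,x],0)\leq\e \text{ for } a\in F\}$, $F\subseteq M$ finite, together with countable saturation of $\CM^{\cU}$, or showing that the relative commutant is a definable set over a countable dense subset of $M$ so that quantification over it can be simulated inside $\CM^{\cU}$. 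Neither step is supplied by citing the first half.
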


\begin{proof} A straightforward diagonalization argument, cf. the proof of
Proposition~\ref{L.realizing-type}.
\end{proof}

The following lemma is a key tool.

\begin{lemma} \label{L.embedding}
Assume  $\CN$ is a  countably saturated $\CL$-structure, $\CA$ and $\CB$
are separable $\CL$-structures, and $\CB$ is an elementary submodel
 of $\CA$.
Also assume   $\Psi\colon \CB\to \CN$
 is an  elementary embedding.
Then $\Psi$ can be extended to an elementary embedding $\Phi\colon
\CA\to \CN$. { \spreaddiagramrows{1.5pc}
\spreaddiagramcolumns{1.5pc}
\[
\diagram
 \CA\xdashed[dr]^{\tilde\Psi}|>\Tip &  \\
\CB\rto^{\Psi}\uto^{\id} & \CN
\enddiagram
\]}
\end{lemma}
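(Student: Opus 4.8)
The plan is to prove Lemma~\ref{L.embedding} by the standard back-and-forth-style transfinite construction that realizes types in a countably saturated model, adapted to the metric setting. Enumerate a countable dense subset $\{a_n : n \in \bbN\}$ of $\CA$ (which exists since $\CA$ is separable), and build an increasing chain of separable elementary submodels $\CB = \CB_0 \prec \CB_1 \prec \cdots$ of $\CA$ together with elementary embeddings $\Psi_n \colon \CB_n \to \CN$ extending $\Psi_0 = \Psi$, arranging that $a_n \in \CB_{n+1}$. At the end, $\bigcup_n \CB_n$ is a dense elementary submodel of $\CA$, hence (after completion) equals $\CA$ by the downwards L\"owenheim--Skolem theorem (Corollary~\ref{P.LST}) together with Lemma~\ref{L.formula}; the union $\bigcup_n \Psi_n$ is then an elementary embedding from a dense subset of $\CA$ into $\CN$, and uniform continuity of formulas (Lemma~\ref{L.formula}) lets us extend it continuously to all of $\CA$ while preserving elementarity.

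The heart of the construction is the successor step: given an elementary embedding $\Psi_n \colon \CB_n \to \CN$ with $\CB_n$ separable, and an element $a = a_n \in \CA$, I want to extend $\Psi_n$ to an elementary embedding defined on a separable elementary submodel $\CB_{n+1}$ of $\CA$ containing $\CB_n$ and $a$. First apply Corollary~\ref{P.LST} inside $\CA$ to get such a separable $\CB_{n+1} \prec \CA$ with $N(\CB_n) \cup \{a\} \subseteq \CB_{n+1}$. Now consider the type $p$ over $\Psi_n(\CB_n) \subseteq \CN$ consisting of all conditions $\varphi(\bar x, \Psi_n(\bar b)) \leq r$ such that $\varphi(\bar c, \bar b) \leq r$ holds in $\CB_{n+1}$, where $\bar c$ enumerates (a countable dense subset of) $\CB_{n+1}$ and $\bar b$ ranges over tuples from $\CB_n$; since $\Psi_n$ is elementary, this type is finitely satisfiable in $\CN$ (each finite fragment is witnessed by the $\Psi_n$-images of the relevant parameters from $\CB_n$ together with the fact that the corresponding existential statement holds in $\CB_n$ by elementarity), so by Proposition~\ref{L.realizing-type} it is consistent, and by countable saturation of $\CN$ it is realized in $\CN$ by some tuple $\bar d$. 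The map sending the dense subset $\bar c$ of $\CB_{n+1}$ to $\bar d$ respects all conditions, hence by uniform continuity extends to a map $\Psi_{n+1} \colon \CB_{n+1} \to \CN$, and it is elementary by construction (it preserves every condition, and the theory of a model is determined by which conditions hold, by Lemma~\ref{L.formula}) and extends $\Psi_n$.

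The main obstacle, and the point requiring care rather than difficulty, is bookkeeping in the metric setting: one must track domains, since types here are $\bar D$-types and countable saturation applies to consistent types over countable parameter sets, and one must make sure the approximate nature of consistency (the $+\epsilon$'s in Proposition~\ref{L.realizing-type} and in the definition of consistent type) does not accumulate. This is handled by working with the complete type of a fixed dense sequence enumerating $\CB_{n+1}$ rather than with a type in finitely many variables; realizing this single (countable-variable) type at one stroke, which countable saturation permits, avoids any iteration of errors. A second routine point is the limit step: $\bigcup_n \CB_n$ need not be complete, but its completion is an elementary submodel of $\CA$ by Lemma~\ref{L.formula} and Tarski--Vaught-type reasoning, and it contains the dense set $\{a_n\}$, hence equals $\CA$; correspondingly $\bigcup_n \Psi_n$ is uniformly continuous on a dense set and extends uniquely to $\Phi \colon \CA \to \CN$, with elementarity passing to the limit again by continuity of formula interpretations. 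Setting $\Phi = \tilde\Psi$ completes the proof.
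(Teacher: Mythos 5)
Your proof is correct, and it is the same idea as the paper's: realize types over countable parameter sets in the countably saturated model $\CN$ and use density plus uniform continuity of formulas to extend to all of $\CA$. The paper's version is leaner, though. It fixes a countable dense sequence $(a_n)$ in $A$ and a countable dense $B_0\subseteq B$, and at stage $n$ simply realizes in $\CN$ the ($\Psi$-image of the) type of the single element $a_n$ over $B_0\cup\{a_j:j<n\}$; no chain of separable elementary submodels and no second appeal to L\"owenheim--Skolem is needed. Your successor step instead realizes ``at one stroke'' the complete type of a countable dense enumeration of $\CB_{n+1}$ over $\Psi_n(\CB_n)$, i.e.\ a type in countably many variables. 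That is a true consequence of countable saturation as defined in the paper, but its standard proof is precisely the one-variable-at-a-time iteration that constitutes the paper's argument, so you are quietly outsourcing the actual content of the lemma to that step; if you unwind it, your proof collapses to the paper's. Two smaller points you handle correctly but should keep visible: completeness of the type (together with the fact that $-\varphi$ is again a formula) is what forces the realization to preserve the values of formulas exactly rather than just from above, and including a dense subset of $\CB_n$ among the enumerated variables (so that the conditions $d(x_i,\Psi_n(c_i))\leq 0$ appear in the type) is what guarantees $\Psi_{n+1}$ genuinely extends $\Psi_n$.
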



\begin{proof} Enumerate a countable dense subset of $A$ as $a_n$, for $n\in \bbN$, and
fix a countable dense $B_0\subseteq B$.
Let $t_n$ be the type of~$a_n$ over $B_0\cup \{a_j\colon j<n\}$. If
$t$ is a type over a subset $X$ of $A$ then by $\Psi(t)$ we denote
the type over the $\Psi$-image of $X$ obtained from $t$ by replacing
each $a\in A$ by $\Psi(a)$.
 By countable saturation  realize $\Psi(t_0)$ in $\CN$ and denote the realization
 by $\Psi(a_0)$ in order to simplify the notation.
 The type $\Psi(t_1)$ is realized in $\CN$ by an element that we denote by $\Psi(a_1)$.
 Continuing in this manner, we find elements $\Psi(a_n)$ in $\CN$, for $n\in \bbN$.
 Since the sequence~$a_n$, for $n\in \bbN$, is dense in $A$,
 by elementarity the map $a_n\mapsto \Psi(a_n)$ can be extended to an
 elementary embedding $\Phi\colon \CA\to \CN$ as required.
 \end{proof}

Note that the analogue of Lemma~\ref{L.embedding} holds when,
instead of assuming $\CA$ to be separable, ~$\CN$ is assumed to be
$\kappa$-saturated for some cardinal $\kappa$ greater than the
 density character of $\CA$. Using  a transfinite extension of
Cantor's back-and-forth method,
 Proposition~\ref{P.LST}
 and this analogue of Lemma~\ref{L.embedding} one proves the following.

 \begin{proposition} \label{P.saturated} Assume $\Lang$ is a separable language.
 If $\CM$ and $\CN$ are elementarily equivalent saturated
 models of $\Lang$ that have
 the same uncountable   density  character then they are isomorphic. \qed
 \end{proposition}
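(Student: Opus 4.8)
The plan is to run a transfinite back-and-forth argument, producing the isomorphism as the completion of a union of a continuous chain of isomorphisms between small elementary submodels. Write $\kappa=\chi(\CM)=\chi(\CN)$; by the definition of \emph{saturated}, $\CM$ and $\CN$ are $\kappa$-saturated, and $\kappa$ is uncountable by hypothesis. Fix dense sets $\{a_\alpha:\alpha<\kappa\}\subseteq M$ and $\{b_\alpha:\alpha<\kappa\}\subseteq N$. By recursion on $\alpha\le\kappa$ I will build increasing, continuous chains of elementary submodels $\CA_\alpha\prec\CM$ and $\CB_\alpha\prec\CN$ together with isomorphisms $\Phi_\alpha\colon\CA_\alpha\to\CB_\alpha$ such that $\Phi_\beta$ extends $\Phi_\alpha$ for $\alpha<\beta$, $\chi(\CA_\alpha)=\chi(\CB_\alpha)\le|\alpha|+\aleph_0$, and $a_\alpha\in\CA_{\alpha+1}$, $b_\alpha\in\CB_{\alpha+1}$.

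For the base step, use Corollary~\ref{P.LST} to get a separable $\CA_0\prec\CM$; since $\Th(\CA_0)=\Th(\CM)=\Th(\CN)$ and $\CN$ is $\kappa$-saturated (hence countably saturated), the realization-of-types construction in the proof of Lemma~\ref{L.embedding}, started from the empty partial map, gives an elementary embedding of $\CA_0$ into $\CN$; its image is the desired $\CB_0\prec\CN$, since an elementary embedding is an isomorphism onto its image and Lemma~\ref{L.formula} then makes the inclusion of the image into $\CN$ elementary. At a successor step $\alpha\to\alpha+1$: apply Corollary~\ref{P.LST} to $\CA_\alpha\cup\{a_\alpha\}$ to obtain $\CA'_\alpha\prec\CM$ with $\CA_\alpha\prec\CA'_\alpha$, $a_\alpha\in\CA'_\alpha$, and $\chi(\CA'_\alpha)\le|\alpha|+\aleph_0<\kappa$; since $\CN$ is $\kappa$-saturated, the analogue of Lemma~\ref{L.embedding} noted after its proof extends the elementary embedding $\CA_\alpha\xrightarrow{\Phi_\alpha}\CB_\alpha\hookrightarrow\CN$ to an elementary embedding of $\CA'_\alpha$ into $\CN$ with image $\CB'_\alpha\prec\CN$. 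Repeat symmetrically on the $\CN$-side with $\CB'_\alpha\cup\{b_\alpha\}$: Corollary~\ref{P.LST} gives $\CB_{\alpha+1}\prec\CN$ with $b_\alpha\in\CB_{\alpha+1}$ and small density character, and $\kappa$-saturation of $\CM$ extends the inverse embedding to $\CB_{\alpha+1}\to\CM$ with image $\CA_{\alpha+1}\prec\CM$; let $\Phi_{\alpha+1}$ be the resulting isomorphism $\CA_{\alpha+1}\to\CB_{\alpha+1}$. It extends $\Phi_\alpha$, and $a_\alpha\in\CA'_\alpha\subseteq\CA_{\alpha+1}$ while $b_\alpha\in\CB_{\alpha+1}$. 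At a limit stage $\lambda$ let $\CA_\lambda$ be the metric completion of $\bigcup_{\alpha<\lambda}\CA_\alpha$ taken inside $\CM$ (a continuous Tarski--Vaught argument, exactly as in the proof of the downward L\"owenheim--Skolem theorem above, shows $\CA_\lambda\prec\CM$), likewise $\CB_\lambda$, and let $\Phi_\lambda$ be the continuous extension of $\bigcup_{\alpha<\lambda}\Phi_\alpha$; this extension exists and is an elementary isomorphism because an elementary embedding is an isometry (the metric is a relation symbol) and formulas are uniformly continuous on domains by Lemma~\ref{L.formula}. The bound $\chi(\CA_\alpha)\le|\alpha|+\aleph_0<\kappa$ is maintained at every stage, since every ordinal below the cardinal $\kappa$ has cardinality below $\kappa$.

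Finally, $\bigcup_{\alpha<\kappa}\CA_\alpha$ contains the dense set $\{a_\alpha:\alpha<\kappa\}$ and so is dense in $\CM$; likewise $\bigcup_{\alpha<\kappa}\CB_\alpha$ is dense in $\CN$ and equals the range of $\Phi:=\bigcup_{\alpha<\kappa}\Phi_\alpha$. Thus $\Phi$ is an isometric, elementary bijection between dense substructures, so it extends to an isometry $\bar\Phi\colon\CM\to\CN$; $\bar\Phi$ is injective, its image is complete (hence closed) and contains a dense subset of $\CN$, so $\bar\Phi$ is onto, and it commutes with all function and relation symbols by continuity. Hence $\bar\Phi$ is the required isomorphism.

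I expect the main obstacle to be the limit stages: one must check that the metric completion of an increasing union of elementary submodels is again an elementary submodel — the continuous form of the Tarski--Vaught criterion, already implicit in the downward L\"owenheim--Skolem proof above — and that the accumulated partial isomorphism extends continuously to the completion and remains elementary. A secondary point needing care is the cardinal arithmetic keeping $\chi(\CA_\alpha)<\kappa$ at all stages, so that $\kappa$-saturation of $\CM$ and $\CN$ is always enough to apply the extension lemma; this is exactly where the hypothesis that $\kappa$ is uncountable (so $|\alpha|+\aleph_0<\kappa$ for all $\alpha<\kappa$) is used.
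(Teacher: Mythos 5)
Your proof is correct and follows exactly the route the paper indicates for this proposition: a transfinite back-and-forth assembled from the downward L\"owenheim--Skolem theorem (Corollary~\ref{P.LST}) and the $\kappa$-saturated analogue of Lemma~\ref{L.embedding}. The paper leaves the details to the reader; your write-up supplies them, including the limit-stage Tarski--Vaught check and the cardinality bookkeeping where uncountability of $\kappa$ is used.
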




For simplicity, in the following discussion we refer to tracial von
Neumann algebras
 (C*-algebras, unitary
groups of a tracial von Neumann algebra or a C*-algebra,
respectively)  as `algebras.'

\begin{corollary} \label{C.CH}\label{C.CH.C*}
Assume the Continuum Hypothesis. If $M$ is an algebra   of 
density character $\leq \fc$ then all of its ultrapowers associated with
nonprincipal ultrafilters are isomorphic. If $M$ is separable, then
all of its relative commutants in ultrapowers associated
 with nonprincipal ultrafilters are isomorphic.
\end{corollary}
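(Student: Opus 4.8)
The plan is to combine the countable saturation of ultrapowers (Proposition~\ref{P.ctbly.saturated}) with the isomorphism theorem for saturated models of the same uncountable character density (Proposition~\ref{P.saturated}). First I would observe that since the languages $\Lang_{\Tr}$ and $\Lang_{C^*}$ are separable, the hypothesis of Proposition~\ref{P.saturated} on the language is met. Let $M$ be an algebra of character density $\leq \fc$, and let $\cU$ and $\cV$ be nonprincipal ultrafilters on $\bbN$. By \L os's theorem (Proposition~\ref{P.Los}(3)), both $M^{\cU}$ and $M^{\cV}$ contain $\CM(M)$ elementarily, hence they are elementarily equivalent to each other. By Proposition~\ref{P.ctbly.saturated} both are countably saturated, i.e. $\aleph_1$-saturated. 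Under CH we have $\fc=\aleph_1$, so it remains to check that $M^{\cU}$ and $M^{\cV}$ each have character density exactly $\aleph_1$: then they are $\aleph_1$-saturated models of character density $\aleph_1$, hence saturated in the sense of the excerpt, and Proposition~\ref{P.saturated} gives $M^{\cU}\cong M^{\cV}$.

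The main point to verify, therefore, is the character density count. On the one hand, $M^{\cU}$ is a quotient of $\ell^\infty(M)$, and a dense subset of $M$ of size $\leq\fc=\aleph_1$ yields (via sequences eventually constant with values in a fixed countable dense set, or more directly via a counting argument using separability of the language) a dense subset of $M^{\cU}$ of size $\leq\aleph_1$. On the other hand, the character density of $M^{\cU}$ is at least $\aleph_1$: the type over $\CM(M)$ asserting $d(a,x)\geq\e$ for all $a$ in a fixed countably infinite discrete subset (as in the Example preceding \S4.4) is consistent, so by countable saturation it is realized; iterating, or appealing to the standard fact that a nonprincipal ultrapower over $\bbN$ of an infinite structure is nonseparable, shows $M^{\cU}$ is not separable. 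Combining the two bounds gives $\chi(M^{\cU})=\aleph_1$, and likewise for $M^{\cV}$. The same argument applies verbatim to unitary groups.

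For the relative commutant statement, assume $M$ is separable. By the second part of Proposition~\ref{P.ctbly.saturated}, $M'\cap M^{\cU}$ and $M'\cap M^{\cV}$ are countably saturated. One must also check they are elementarily equivalent and have character density $\aleph_1$; elementary equivalence follows because the theory of the relative commutant depends only on the theory of $M$ (this is again a consequence of \L os's theorem applied to the index-shift argument, or can be cited as folklore), and the density character count is as before. Then Proposition~\ref{P.saturated} applies again to yield $M'\cap M^{\cU}\cong M'\cap M^{\cV}$.

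The step I expect to be the genuine obstacle is not any single hard argument but rather pinning down that the two ultrapowers are \emph{saturated} in the precise sense used in Proposition~\ref{P.saturated}, which requires simultaneously the lower bound $\chi(M^{\cU})\geq\aleph_1$ (nonseparability of nonprincipal ultrapowers) and the upper bound $\chi(M^{\cU})\leq\fc$; under CH these coincide, and this coincidence is exactly where the Continuum Hypothesis is used. For the relative commutant one additionally needs that its character density is uncountable, which for separable $M$ with enough structure (e.g.\ an infinite-dimensional algebra) again follows from countable saturation applied to a suitable consistent type of the form $d([a,x],0)=0 \wedge d(b,x)\geq\e$; I would note that for finite-dimensional $M$ the relative commutant is separable and the statement is vacuous or trivial.
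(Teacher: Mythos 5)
Your treatment of the ultrapowers themselves is essentially the paper's argument: \L os's theorem gives elementary equivalence, Proposition~\ref{P.ctbly.saturated} gives countable saturation, CH identifies $\aleph_1$-saturation with saturation at character density $\fc$, and Proposition~\ref{P.saturated} finishes. (One small caveat: a nonprincipal ultrapower over $\bbN$ of a compact structure equals the structure, so the claim $\chi(M^{\cU})=\aleph_1$ fails for, say, matrix algebras; in that degenerate case the conclusion is trivial, but your ``standard fact'' about nonseparability of ultrapowers of infinite structures is not literally true in the metric setting.)

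The relative commutant part is where you diverge from the paper, and where there are genuine gaps. The paper's proof is shorter and bypasses both of your problematic steps: since $M$ is separable, the back-and-forth underlying Proposition~\ref{P.saturated} (via Lemma~\ref{L.embedding}) can be seeded with the identity map between the two diagonal copies of $M$, so the isomorphism $M^{\cU}\cong M^{\cV}$ can be chosen to fix $M$ pointwise and therefore carries $M'\cap M^{\cU}$ onto $M'\cap M^{\cV}$. You instead try to apply Proposition~\ref{P.saturated} directly to the relative commutants, and two of the required inputs are not justified. First, elementary equivalence of $M'\cap M^{\cU}$ and $M'\cap M^{\cV}$ is not a consequence of \L os's theorem or an index-shift: the paper proves it separately (Corollary~\ref{C.Relative}) using countable saturation of the ultrapowers, and explicitly notes that it can fail for nonseparable $M$ (the $\cB(\ell^2)$ example), so any correct justification must invoke separability rather than \L os. Second, the lower bound $\chi(M'\cap M^{\cU})\geq\aleph_1$ is false for general infinite-dimensional separable $M$: for a separable II$_1$ factor without property Gamma (e.g.\ a free group factor) the relative commutant is just $\bbC$, and the type you propose, $d([a,x],0)=0$ for all $a$ together with $d(b,x)\geq\e$, is inconsistent there. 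In that situation Proposition~\ref{P.saturated}, which requires uncountable character density, does not apply, and you would need a separate (easy but unstated) back-and-forth for elementarily equivalent separable countably saturated structures --- or, more simply, the paper's fix-the-diagonal argument, which handles all cases uniformly.
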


\begin{proof}
The Continuum Hypothesis implies that  such ultrapowers are
saturated and by Proposition~\ref{P.Los},
Proposition~\ref{P.ctbly.saturated}
 and Proposition~\ref{P.saturated}  they are  all isomorphic.
If $M$ is separable, then the isomorphism between the ultrapowers
can be chosen to send the diagonal copy of~$M$ in one ultrapower to
the diagonal copy of~$M$ in the other ultrapower and therefore the
relative commutants are isomorphic.
\end{proof}

 It should be noted that, even in the case when the Continuum Hypothesis fails,
 countable saturation and a transfinite back-and-forth construction together  show
 that ultrapowers of a fixed algebra  are very similar to each other.

\begin{corollary}\label{C.Similar}
 Assume $M$ is a separable algebra and $\cU$
and $\cV$ are nonprincipal ultrafilters on $\bbN$. Then for all
separable algebras $N$ we have the following:
\begin{enumerate}
\item $N$ is a subalgebra of $M^{\cU}$ if and only if it is a subalgebra of $M^{\cV}$;
\item $N$ is a subalgebra of $M'\cap M^{\cU}$ if and only if $N$ is a subalgebra of $M'\cap M^{\cV}$.
\end{enumerate}
\end{corollary}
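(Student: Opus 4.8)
The plan is to derive Corollary~\ref{C.Similar} from the countable saturation of the ultrapowers (Proposition~\ref{P.ctbly.saturated}) together with a back-and-forth argument of the type used in Lemma~\ref{L.embedding}. The key point is that for a separable algebra $N$, being a subalgebra of $M^{\cU}$ is an elementary property of $N$ relative to the theory of $M$: it depends only on which separable models can be elementarily (or just algebraically) embedded into a countably saturated model of $\Th(M)$. Since both $M^{\cU}$ and $M^{\cV}$ are countably saturated models of $\Th(M)$ (by Proposition~\ref{P.Los}(1), they have the same theory as $M$, and by Proposition~\ref{P.ctbly.saturated} they are countably saturated), the class of separable algebras embeddable into them is the same.

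First I would prove (1). Suppose $N$ is a separable subalgebra of $M^{\cU}$. I want to embed $N$ into $M^{\cV}$. The subtlety is that an arbitrary subalgebra inclusion $N \hookrightarrow M^{\cU}$ need not be elementary, so I cannot directly invoke Lemma~\ref{L.embedding}. Instead, I would consider the \emph{atomic diagram} of $N$: fix a countable dense subset $(b_n : n \in \bbN)$ of $N$, and record, for each term $t$ in the language and each tuple from the $b_n$, the value that the corresponding quantifier-free formula takes in $M^{\cU}$. This gives a countable set of conditions $p$ in countably many variables (one for each $b_n$) over the empty set in the language of $\Th(M)$. Because $p$ is realized in $M^{\cU}$ (namely by the $b_n$ themselves), $p$ is consistent; since it is countable and $M^{\cV}$ is countably saturated, $p$ is realized in $M^{\cV}$, say by $(b_n' : n \in \bbN)$. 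The map $b_n \mapsto b_n'$ preserves all quantifier-free conditions, hence extends by uniform continuity of terms (Lemma~\ref{L.formula}) to an isometric algebra embedding $N \hookrightarrow M^{\cV}$. By symmetry in $\cU$ and $\cV$, this proves (1). Strictly speaking one must check that preservation of the quantifier-free conditions $d_U(t(\bar b), s(\bar b)) \le r$ and $r \le d_U(t(\bar b), s(\bar b))$ forces $b_n \mapsto b_n'$ to be a well-defined injective $*$-homomorphism onto a subalgebra, but this is routine: these conditions encode that each algebraic identity holds and that distinct dense elements map to elements at the correct distance.

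For (2), the relative commutant $M' \cap M^{\cU}$ is also countably saturated by the second half of Proposition~\ref{P.ctbly.saturated} (here separability of $M$ is used). It is likewise elementarily equivalent to $M' \cap M^{\cV}$: one way to see this is to note that both are countably saturated, and by a standard argument the theory of the relative commutant in a countably saturated ultrapower depends only on $\Th(M)$ together with the requirement that $M$ embed diagonally, which is the same for $\cU$ and $\cV$; alternatively one can quote Corollary~\ref{C.CH}-style reasoning restricted to the countably-saturated part. Granting that $M' \cap M^{\cU} \equiv M' \cap M^{\cV}$, the identical atomic-diagram argument as in (1), applied now inside the relative commutant, gives (2). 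The main obstacle is point (2): justifying that $M' \cap M^{\cU}$ and $M' \cap M^{\cV}$ have the same theory (equivalently, realize the same quantifier-free $\infty$-types of separable algebras) needs a small argument, because the relative commutant is not itself an ultrapower in an obvious way; one handles it by working with types over the separable diagonal copy of $M$, which by Proposition~\ref{P.Los}(2) are computed the same way in both ultrapowers, and then invoking countable saturation of both relative commutants to realize any consistent such type on either side.
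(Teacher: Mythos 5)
Your argument is essentially correct, but it follows a genuinely different route from the paper's. The paper handles both parts in one stroke: given $N\subseteq M^{\cU}$, it applies the downward L\"owenheim--Skolem theorem to find a \emph{separable elementary} submodel $\CP\prec \CM^{\cU}$ containing $N$ together with the diagonal copy of $M$, and then invokes Lemma~\ref{L.embedding} (with $\CB$ the diagonal copy of $M$, $\CA=\CP$, $\CN=\CM^{\cV}$) to extend the identification of the two diagonal copies of $M$ to an elementary embedding $\CP\to \CM^{\cV}$; restricting to $N$ gives part (1), and since the embedding fixes the identification of $M$, commutation with $M$ is preserved automatically, giving part (2) with no extra work. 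You instead transfer the atomic diagram of a countable dense sequence of $N$ directly, using $M^{\cU}\equiv M^{\cV}$ (to see the quantifier-free type is consistent over $M^{\cV}$) plus countable saturation; this is more elementary for part (1) --- no L\"owenheim--Skolem, no Lemma~\ref{L.embedding} --- at the modest cost of checking that a realization of the atomic diagram really closes up to an isometric $*$-embedding and that saturation applies to countable tuples of variables (realized one at a time, as in the proof of Lemma~\ref{L.embedding}). You correctly identify that the naive version of this for part (2) requires knowing $M'\cap M^{\cU}\equiv M'\cap M^{\cV}$ in advance, which is Corollary~\ref{C.Relative} and is proved in the paper \emph{after} this result, so quoting it would risk circularity; your proposed fix --- adjoin the commutation conditions $d([a,x_n],0)=0$ for $a$ in a countable dense subset of the diagonal copy of $M$, observe that consistency of this type over $M$ transfers between the two ultrapowers by elementarity of the diagonal embeddings (Proposition~\ref{P.Los}), and realize it by countable saturation of $M^{\cV}$ --- is sound and is in effect a hands-on reconstruction of what the paper's use of Lemma~\ref{L.embedding} delivers for free. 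The paper's approach buys uniformity (one embedding serving both parts, and an embedding that is elementary on a larger submodel); yours buys transparency in part (1) but needs the extra care you describe in part (2).
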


\begin{proof} These classes of algebras are axiomatizable, so instead of algebras we can
 work with the associated models.  Supposing that $\CN \subset \CM^{\cU}$, apply the downward L\"owenheim--Skolem theorem (Proposition~\ref{P.LST}) to find an elementary submodel $\CP$ of $\CM^{\cU}$ whose universe contains $N$ and the diagonal copy of $M$.  Now consider the elementary inclusion $\CM \subseteq \CP$, and use Lemma~\ref{L.embedding} to extend the map which identifies $\CM$ with the diagonal subalgebra of $\CM^{\cV}$, the latter being countably saturated by Proposition~\ref{P.ctbly.saturated}.  This extension carries $P$ onto a subalgebra of $M^{\cV}$ and restricts to an isomorphism from $N$ onto its image.  In case $M$ and $N$ commute, their images in $M^{\cV}$ do too.
\end{proof}

We also record a refining of the fact that the relative commutants
of a separable algebra are isomorphic assuming CH.

\begin{corollary} \label{C.Relative}
Assume $M$, $\cV$ and $\cU$ are as in Corollary~\ref{C.Similar}.
Then the relative commutants $M'\cap M^{\cU}$ and $M'\cap M^{\cV}$
are elementarily equivalent.
\end{corollary}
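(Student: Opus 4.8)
The plan is to force the Continuum Hypothesis, apply Corollary~\ref{C.CH} in the extension, and then pull the conclusion back to the ground model by absoluteness. Pass to a forcing extension $V[G]$ by a $\sigma$-closed poset $\bbP$ that forces $\mathrm{CH}$ (for instance $\Coll(\omega_1,2^{\aleph_0})$ with countable conditions). Since $\bbP$ is $\sigma$-closed it adds no new reals, and no new $\omega$-sequences of elements of $V$ at all; consequently, in $V[G]$ the sets $\cU$ and $\cV$ are still nonprincipal ultrafilters on $\bbN$, the algebra $M$ is still separable, and $\ell^\infty(M)$, $c_\cU$ and $c_\cV$ are unchanged. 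Hence $M^{\cU}$, $M^{\cV}$, and the relative commutants $M'\cap M^{\cU}$ and $M'\cap M^{\cV}$ (equivalently, as in the proof of Corollary~\ref{C.Similar}, the associated models in $\CL_{\Tr}$ or $\CL_{C^*}$) are literally the same structures in $V[G]$ as in $V$.

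In $V[G]$ the Continuum Hypothesis holds, so Corollary~\ref{C.CH} applies to the separable algebra $M$: the relative commutants $M'\cap M^{\cU}$ and $M'\cap M^{\cV}$ are isomorphic in $V[G]$, hence in particular elementarily equivalent there, i.e. $\Th(M'\cap M^{\cU})=\Th(M'\cap M^{\cV})$ as computed in $V[G]$.

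Finally, the theory of a fixed metric structure is computed absolutely: the interpretation of an $\CL$-sentence is obtained by applying continuous connectives to finitely many suprema and infima, each taken over a fixed domain of quantification of a bounded subset of $\bbR$ assembled from the interpretations of the relation symbols, and none of this depends on the ambient model of set theory --- and in any case $\bbP$ adds no reals. Since $M'\cap M^{\cU}$ and $M'\cap M^{\cV}$ are the same structures in $V$ and in $V[G]$, the equality of theories obtained in $V[G]$ already holds in $V$, which is exactly the assertion that the relative commutants are elementarily equivalent.

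The only point that needs care is the bookkeeping in the first paragraph: one must verify that forcing with $\bbP$ alters neither the two relative commutants nor the status of $\cU$ and $\cV$ as nonprincipal ultrafilters on $\bbN$. This is precisely what $\sigma$-closedness delivers, and it simultaneously makes the absoluteness step routine. (One could instead phrase the argument via a metric form of the Keisler--Shelah theorem, expressing elementary equivalence through isomorphism of suitable ultrapowers, but the route above is cleaner here since Corollary~\ref{C.CH} is already in hand.)
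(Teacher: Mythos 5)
Your proof is correct, but it takes a genuinely different route from the paper. The paper's argument is a two-line internal one: by countable saturation, a consistent type over the diagonal copy of $M$ is realized in $M^{\cU}$ if and only if it is realized in $M^{\cV}$, and restricting attention to types extending the relative commutant type shows the two relative commutants realize exactly the same types over $M$, whence they are elementarily equivalent. You instead force CH with a $\sigma$-closed collapse, invoke Corollary~\ref{C.CH} in the extension, and pull elementary equivalence back by absoluteness. The bookkeeping you flag is indeed the crux and you handle it correctly: since the forcing adds no new $\omega$-sequences of ground-model elements, $\mathcal P(\bbN)$, $\ell^\infty(M)$, the ideals $c_\cU$, $c_\cV$, the ultrapowers, and the relative commutants are all literally unchanged (and remain complete and countably saturated in $V[G]$), while the value of a fixed sentence in a fixed structure is absolute because it is built from sups and infs of fixed real-valued functions over fixed sets, with no new reals around. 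What your approach buys is generality: it is a template showing that any suitably absolute consequence of CH about fixed separable structures and their ultrapowers holds outright, very much in the spirit of the $\Sigma^2_1$-absoluteness discussion in \S 6.3 of the paper; note that it could not be used to pull back the isomorphism itself, only its absolute shadow (equality of theories). What the paper's approach buys is economy: it stays entirely within the model-theoretic toolkit already developed, needs no metamathematics, and makes visible the stronger fact that the two relative commutants realize the same types over $M$, not merely the same sentences.
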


\begin{proof} By countable saturation of
ultrapowers, a type $p$ over the copy of $M$ inside $M^{\cU}$ is
realized if and only the same type over the copy of $M$ inside
$M^{\cV}$ is realized. By considering only types $p$ that extend the
relative commutant type the conclusion follows.
\end{proof}

The conclusion of Corollary~\ref{C.Relative} fails when $M$ is the C*-algebra
$\cB(\ell^2)$. By \cite{FaPhiSte:Relative} CH implies $\cB(\ell^2)'\cap
\cB(\ell^2)^{\cU}$ is trivial for one $\cU$ and infinite-dimensional for
another. This implies that the assumption of separability is
necessary in Corollary~\ref{C.Relative}.

\section{Stability, the order property, and nonisomorphic ultrapowers}
\label{S.Model-theoretic}

This section defines the two main model theoretic notions of the paper: stability and the order property.  We show that each is equivalent to the negation of the other (Theorem~\ref{T.OP}), and that the order property is equivalent to the existence of nonisomorphic ultrapowers when the continuum hypothesis fails (Theorem \ref{T0}).
While the analogue of the
former fact is well-known in the discrete case, we could not find a reference
to the analogue of the latter fact in the discrete case.
We have already seen that when the continuum hypothesis holds all ultrapowers are isomorphic (Corollary \ref{C.CH}).  (Of course we are talking about separable structures with ultrapowers based on free ultrafilters of $\mathbb{N}$.)

\subsection{Stability}\label{S.Stability}


\begin{definition}
We say a theory $T$
 is \textbf{$\lambda$-stable} if for any model $M$ of $T$
 of density character $\lambda$, the space of complete types $S(M)$ has density character $\lambda$ in the metric topology on $S(M)$.  We say $T$ is
 \textbf{stable} if it is stable for some $\lambda$ and $T$ is \textbf{unstable} if it is not stable.
 \end{definition}

For a theory $T$ in a separable language one can show that $T$ is stable if and only if
 it is $\fc$-stable (see the proof of Theorem~\ref{T.OP}).

Our use of the term ``stable" in this paper agrees with model theoretic terminology
in both continuous and discrete logic.  Motivated by model theory, in 1981 Krivine and Maurey defined
a related notion of stability for Banach spaces that is now more familiar to many analysts (\cite{KM}).  It is characterized by the requirement
\begin{equation}
\tag{*} \lim_{i \to \CU} \lim_{j \to \CV} \|x_i + y_j\| = \lim_{j \to \CV} \lim_{i \to \CU} \|x_i + y_j\|
\end{equation}
for any uniformly bounded sequences $\{x_i\}$ and $\{y_j\}$, and any
free ultrafilters $\CU, \CV$ on $\mathbb{N}$. One can show
(\cite{Iovino:StableI}) that a Banach space satisfies (*)
if and only if no quantifier-free formula has the order property in that structure(cf. \S \ref{S.OP}), so model theoretic stability of the theory of a Banach space $X$ implies stability of $X$in the sense of Krivine-Maurey.

We proved in \cite{FaHaSh:Model1} that all infinite-dimensional C*-algebras are unstable.
The same cannot be said for infinite-dimensional Banach algebras: take a stable Banach space
and put the zero product on it. However a stable Banach space can become unstable when it is turned
into a Banach algebra.  We exhibit this behavior in Proposition \ref{T.BAlg} below.

\subsection{The order property} \label{S.OP}

\begin{definition}\label{Def.OP}
We say that a continuous theory $T$ \textbf{has the order property}
if \begin{itemize}
\item there is a formula $\psi(\bar x,\bar y)$ with the lengths of $\bar x$ and $\bar y$ the same, and a
sequence of domains $\bar D$ consistent with the sorts of $\bar x$ and $\bar y$, and
\item a model $M$ of $T$ and $\langle \bar a_i : i \in \bbN \rangle \subseteq \bar D(M)$
\end{itemize}
such that
$$\psi(a_i,a_j) = 0 \mbox{ if $i < j$ and } \psi(a_i,a_j) = 1 \mbox{ if }i \geq
j.$$

Note that these evaluations are taking place in $M$. Also note that
by the uniform continuity of $\psi$, there is some $\e>0$ such that
$d(\bar a_i,\bar a_j) \geq \e$ for every $i \neq j$ where the metric
here is interpreted as the supremum of the coordinatewise metrics.
\end{definition}

\begin{proposition}
$\Th(A)$ has the order property if and only if there is $\psi$ and
$\bar D$ such that for all $n$ and $\delta > 0$, there are
$a_1,\ldots,a_n \in \bar D(A)$ such that
$$\psi(a_i,a_j) \leq \delta \mbox{ if $i < j$ and } \psi(a_i,a_j) \geq 1 -
\delta \mbox{ if } i \geq j.$$

\end{proposition}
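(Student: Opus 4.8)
The plan is to route both implications through a single quantifier-free ``bad configuration of length $n$'' sentence and the observation that a model of $\Th(A)$ is precisely a model elementarily equivalent to $A$. For each $n\in\bbN$ let $\sigma_n$ be the $\CL$-sentence
\[
\sigma_n=\inf_{\bar y_1\in\bar D}\cdots\inf_{\bar y_n\in\bar D}\ \max\Bigl(\max_{i<j}\psi(\bar y_i,\bar y_j),\ \max_{i\geq j}\bigl(1-\psi(\bar y_i,\bar y_j)\bigr)\Bigr),
\]
which is a legitimate sentence by the formation rules of \S\ref{SS.Syntax} (a finite continuous combination of substitution instances of $\psi$, followed by finitely many $\inf$'s over the domains listed in $\bar D$). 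The right-hand side of the proposition is, by unwinding ``the infimum equals $0$'' in $A$, exactly the assertion that $\sigma_n^A=0$ for every $n$. Moreover, by the convention on theories, if $M\models\Th(A)$ then applying the sentence $|\varphi-\varphi^A|\in\Th(A)$ to each sentence $\varphi$ yields $\varphi^M=\varphi^A$, so $M\equiv A$; the converse is trivial. Hence $\sigma_n^A=\sigma_n^M$ for any $M\models\Th(A)$.

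For the forward direction, suppose $\Th(A)$ has the order property, witnessed by $\psi$, $\bar D$, a model $M\models\Th(A)$, and $\langle\bar a_i:i\in\bbN\rangle\subseteq\bar D(M)$ with $\psi(\bar a_i,\bar a_j)^M=0$ for $i<j$ and $=1$ for $i\geq j$. Substituting the first $n$ of the $\bar a_i$ for $\bar y_1,\dots,\bar y_n$ gives $\sigma_n^M=0$, hence $\sigma_n^A=0$ since $M\equiv A$. Therefore for the given $\delta>0$ there are $a_1,\dots,a_n\in\bar D(A)$ making the inner $\max$ at most $\delta$, i.e.\ $\psi(a_i,a_j)\leq\delta$ for $i<j$ and $1-\psi(a_i,a_j)\leq\delta$, equivalently $\psi(a_i,a_j)\geq1-\delta$, for $i\geq j$. (The infimum need not be attained, the domains being complete but not compact, but an approximate witness for each fixed $\delta$ is all that is claimed.)

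For the converse, assume the displayed condition, i.e.\ $\sigma_n^A=0$ for all $n$. Consider the type $p$ over $\emptyset$ in the infinite tuple of variable-blocks $\langle\bar x_i:i\in\bbN\rangle$ consisting of all conditions forcing each $\bar x_i$ into $\bar D$ together with $\psi(\bar x_i,\bar x_j)\leq0$ for $i<j$ and $1-\psi(\bar x_i,\bar x_j)\leq0$ for $i\geq j$. Any finite $p_0\subseteq p$ involves only blocks $\bar x_i$ with $i<N$ for some $N$, and applying the hypothesis with $n=N$ and $\delta=\epsilon$ produces a tuple in $\bar D(A)$ satisfying $p_0$ up to $\epsilon$; thus $p$ is consistent. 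By Proposition~\ref{L.realizing-type}, $p$ is realized in some $\CN\succ A$, and $\CN\succ A$ forces $\CN\models\Th(A)$. A realizing sequence $\langle\bar a_i:i\in\bbN\rangle\subseteq\bar D(\CN)$ satisfies $\psi(\bar a_i,\bar a_j)=0$ for $i<j$ and $=1$ for $i\geq j$, which is the order property for $\Th(A)$. (Alternatively one can bypass types: for each $n$ fix $a^{(n)}_1,\dots,a^{(n)}_n\in\bar D(A)$ with error $\leq1/n$, pick a nonprincipal $\cU$ on $\bbN$, and let $\bar b_i\in A^{\cU}$ be the class of $n\mapsto a^{(n)}_i$, with arbitrary entries of $\bar D$ for $n<i$; then \L os's theorem, Proposition~\ref{P.Los}, gives $\psi(\bar b_i,\bar b_j)^{A^{\cU}}=0$ or $1$ according as $i<j$ or $i\geq j$, and $A^{\cU}\models\Th(A)$.)

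There is no genuine obstacle here; the only points demanding care are verifying that $\sigma_n$ is a well-formed formula and recalling that ``$\inf=0$'' produces approximate, not exact, witnesses in $A$, which is exactly why the right-hand side carries a ``for all $\delta>0$'' quantifier rather than an exact $0/1$ pattern.
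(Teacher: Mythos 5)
Your proof is correct and is exactly the compactness argument that the paper invokes with the single word ``Compactness'': encode the length-$n$ approximate configuration as a sentence $\sigma_n$, transfer its value between $A$ and any $M\models\Th(A)$ via elementary equivalence, and recover the infinite exact configuration from the approximate ones by realizing a type (or by \L os's theorem in an ultrapower). The only cosmetic point is that in the converse your realization guarantees $\psi\leq 0$ for $i<j$ and $\psi\geq 1$ for $i\geq j$ rather than the exact values $0$ and $1$ demanded by the definition; this is repaired by replacing $\psi$ with $\min(\max(\psi,0),1)$, a continuous connective applied to $\psi$ and hence still a formula witnessing the order property.
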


\begin{proof} Compactness. \end{proof}

\begin{definition}
Suppose that $M$ is a metric structure and $p(\bar x) \in S^{\bar
D}(M)$ is a type.  We say that $p$ is \textbf{finitely determined} if for
every formula $\varphi(\bar x,\bar y)$, choice of domains $\bar D'$
consistent with the variables $\bar y$, and $m \in \bbN$, there is $k
\in \bbN$ and a finite set $B \subseteq \bar D(M)$ such that for
every $\bar c_1,\bar c_2 \in \bar D'(M)$ (see Notation~\ref{Not.type})
$$ \sup_{\bar b \in B} |\varphi(\bar b,\bar c_1) - \varphi(\bar b,\bar c_2)| \leq \frac{1}{k} \quad \Rightarrow \quad
|\varphi(p,\bar c_1) - \varphi(p,\bar c_2)| \leq \frac{1}{m}.$$
\end{definition}


\begin{theorem}\label{T.OP}
The following are equivalent for a continuous theory $T$:
\begin{enumerate}
\item $T$ is stable.
\item $T$ does not have the order property.
\item If $M$ is a model of $T$ and $p \in S(M)$ then $p$ is finitely determined.
\end{enumerate}
\end{theorem}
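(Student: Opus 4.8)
The plan is to prove the cycle of implications $(1)\Rightarrow(3)\Rightarrow(2)\Rightarrow(1)$, adapting the classical argument of Shelah to the metric setting. The strategy mirrors the standard proof that stability, definability of types, and the failure of the order property are equivalent, with the caveat that everything has to be made quantitative: ``definable'' becomes ``finitely determined'' (a uniform $\varepsilon$-approximate version), and counting types is replaced by estimating density characters of the space $S^{\bar D}(M)$ in the logic topology. Throughout I would fix a single formula $\psi(\bar x,\bar y)$ and work ``locally'' with $\psi$-types first, i.e.\ restrictions of complete types to conditions involving only $\psi$, then assemble the local results using separability of the language (Proposition~\ref{P.Separable.language}).

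For $(2)\Rightarrow(1)$: assume $T$ does not have the order property, fix a formula $\psi(\bar x,\bar y)$ and a domain tuple, and bound the density character of the space of $\psi$-types over a model $M$ of density character $\lambda$. The key combinatorial input is that, absent the order property, for each $\delta>0$ there is a finite bound $n(\psi,\delta)$ on the length of a $\delta$-separated ``ordered sequence'' for $\psi$; a Ramsey/Nip-style argument then shows that the number of values $\psi(p,\bar a)$ can take, up to $\delta$, as $p$ ranges over $\psi$-types with fixed parameters $\bar a$, is bounded. One then runs the usual tree argument: if $S^{\bar D}(M)$ had density character $>\lambda$ one could build a binary tree of conditions of height $\omega$ whose branches give $2^{\aleph_0}$ many $\delta$-separated types sharing a countable parameter set, and a compactness argument extracts from this tree an infinite $\psi$-ordered sequence, contradicting the bound. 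Summing over the countably many formulas of the separable language and all domain tuples gives $\fc$-stability, hence stability. For $(1)\Rightarrow(3)$: supposing some type $p\in S(M)$ is not finitely determined, for some $\varphi$, domain tuple $\bar D'$, and $m$ one can, for every finite $B$ and every $k$, find $\bar c_1,\bar c_2$ that are $\varphi$-indistinguishable by $B$ within $1/k$ yet with $|\varphi(p,\bar c_1)-\varphi(p,\bar c_2)|>1/m$; iterating and interleaving the two sequences produces, in an elementary extension realizing $p$, an infinite sequence witnessing the order property for a formula derived from $\varphi$ (something like $|\varphi(\bar x,\bar y)-\varphi(p,\bar y)|$ after rescaling), and then a counting argument over a suitable parameter model contradicts stability --- this is essentially the contrapositive $\neg(3)\Rightarrow\neg(1)$ routed through $\neg(3)\Rightarrow(2)\Rightarrow\neg(1)$, so in practice I would prove $\neg(3)\Rightarrow(2)$ directly.

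For $(3)\Rightarrow(2)$: if every type over every model is finitely determined, then in particular the order property fails, because an infinite $\psi$-ordered sequence $\langle\bar a_i\rangle$ in a model $M$ has a limit type $p$ (the type of a ``pseudo-limit'' realized in an ultrapower, via Proposition~\ref{P.ctbly.saturated} and Proposition~\ref{L.realizing-type}) for which $\psi(p,\bar a_i)$ must jump from near $0$ to near $1$ infinitely often; but finite determinacy of $p$ forces $\psi(p,-)$ to be uniformly approximable by a finite sample $B$, and on the tail past $\max B$ the values $\psi(\bar b,\bar a_i)$ stabilize (again by absence of a long ordered sequence restricted to $B\cup\{\bar a_i\}$, or directly by finiteness of $B$ and the Ramsey argument), contradicting the oscillation. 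This direction is the quickest.

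The main obstacle is the $(2)\Rightarrow(1)$ step --- specifically, the quantitative counting argument. In discrete logic one counts types outright; here one must control density characters in the logic topology, which means carefully choosing the $\delta$-net of possible values of $\psi(p,\bar a)$, handling the fact that a complete type is determined only by its values on \emph{all} formulas (so one genuinely needs the separability of $\Lang$ to sum the local estimates), and making sure the tree construction yields $\delta$-separated types in the \emph{metric} sense strong enough to conclude about the logic-topology density character. The Ramsey-theoretic lemma bounding the number of approximate $\psi$-values in terms of $n(\psi,\delta)$ --- the metric analogue of the ``$\psi$ has finite VC/alternation number'' fact --- is where the real work lies, and I expect it to require a doubling/halving argument on $\delta$ together with finite Ramsey's theorem applied to the finitely many ``colours'' coming from a $\delta$-net of $[0,1]$.
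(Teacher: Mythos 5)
Your architecture is sound, and two of your implications coincide with the paper's: your $\neg(3)\Rightarrow\mathrm{OP}$ is exactly the paper's $(2)\Rightarrow(3)$ (with one caveat: the ordering formula cannot literally mention $p$, as in your suggested $|\varphi(\bar x,\bar y)-\varphi(p,\bar y)|$; the paper's device is to order the \emph{triples} $(a_j,b_j,c_j)$ by $\theta=|\varphi(x_1,y_2)-\varphi(x_1,z_2)|$, so that the reference to $p$ is absorbed into choosing pairs $(b_j,c_j)$ with $|\varphi(p,b_j)-\varphi(p,c_j)|>1/m$), and your ``OP implies unstable'' is the paper's $(1)\Rightarrow(2)$, done there via a lexicographically ordered family $\langle \bar a_i : i\in 2^{<\mu}\rangle$ obtained by compactness. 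Where you genuinely diverge is in how you get back to stability: you propose proving $(2)\Rightarrow(1)$ head-on, i.e., the full local-stability package (finite alternation bounds, Ramsey over a $\delta$-net, counting $\psi$-types, summing over a dense set of formulas). That route does work --- it is essentially Ben Yaacov--Usvyatsov's local stability theorem --- but you leave its hard kernel at the level of ``I expect a Ramsey argument,'' and the whole point of clause (3) is to make that machinery unnecessary: the paper proves $(3)\Rightarrow(1)$ in two lines, since a finitely determined type is recovered from countably many finite samples drawn from a dense subset of $M$ together with the corresponding values, so over a model of density character $\lambda$ with $\lambda^{\chi(T)}=\lambda$ there are at most $\lambda$ such types. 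Your extra implication $(3)\Rightarrow(2)$ is then redundant; note also that, as sketched, your limit type does not oscillate --- the ultrafilter limit of the types of the $\bar a_i$ over $M$ takes the constant value $1$ on every $\psi(\bar x,\bar a_j)$ --- so you would first have to adjoin a realization of that limit type (or pass to a densely ordered sequence and take a cut) before finite determinacy visibly fails. In short, your plan is viable but trades the paper's easy counting step for the one direction the statement of the theorem is engineered to avoid.
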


\begin{proof}
(1) implies (2) is standard: suppose that $T$ has the order property
via a formula $\theta$ and choose any cardinal $\lambda$.  Fix $\mu
\leq \lambda$ least such that $2^\mu > \lambda$ (note that $2^{<\mu}
\leq \lambda$).  By compactness, using the order property, we can
find $\langle \bar a_i : i \in 2^{<\mu} \rangle$ such that
$\theta(\bar a_i,\bar a_j) = 0$ if $i < j$ in the standard
lexicographic order and 1 otherwise.  Clearly, $\chi(S(A)) >
\chi(A)$ where $A = \{\bar a_i : i \in 2^{< \mu} \}$ so $T$ is not
$\lambda$-stable for any $\lambda$.

To see that (3) implies (1), fix a model $M$ of $T$ with density
character $\lambda$ where $\lambda^{\chi(T)} = \lambda$.  By
assumption, every type over $M$ is finitely determined and so there
are at most $\lambda^{\chi(T)} = \lambda$ many types over $M$ and so
$T$ is $\lambda$-stable.

Finally, to show that (2) implies (3), suppose that there is a type
over a model of $T$ which is not finitely determined. Fix $p(\bar x)
\in S^{\bar D}(M)$, $\varphi(\bar x,\bar y)$, domains $\bar D'$
consistent with the variables $\bar y$ and $m \in \bbN$ so that
for all $k$ and finite $B \subseteq \bar D(M)$, there are $n_1,n_2
\in \bar D'(M)$ such that
 \[\max_{b \in B}|\varphi(b,n_1) - \varphi(b,n_2)| \leq
1/k\] but
 \[|\varphi(p,n_1) - \varphi(p,n_2)| > 1/m.\]

 We now use this $p$ to construct an ordered sequence.  Define
 sequences $a_j,b_j,c_j$ and sets $B_j$ as follows:  $B_0 = \emptyset$.
 If we have defined $B_j$, choose $b_j,c_j \subseteq \bar D'(M)$ such that
  $\max_{b \in B_j}|\varphi(b,b_j) - \varphi(b,c_j)|
\leq 1/2m$ but
 $|\varphi(p,b_j) - \varphi(p,c_j)| > 1/m$.

Now choose $a_j \in
 \bar D(M)$ so that $a_j$ realizes $\varphi(\bar x,b_i) = \varphi(p,b_i)$ and
 $\varphi(\bar x,c_i) = \varphi(p,c_i)$ for all $i \leq j$.  Let $B_{j+1} =
 B_j\cup\{a_j,b_j,c_j\}$.  It follows that if $i \geq j$ then
 $|\varphi(a_i,b_j) - \varphi(a_i,c_j)| > 1/m$.  If $i < j$ then
 $|\varphi(a_i,b_j) - \varphi(a_i,c_j)| \leq 1/2m$ since $a_i \in
 B_j$.  Consider the formula
 \[\theta(x_1,y_1,z_1,x_2,y_2,z_2) := |\varphi(x_1,y_2) -
 \varphi(x_1,z_2)|.\]
Then $\theta$ orders $\langle a_i,b_i,c_i : i \in \bbN \rangle$.
\end{proof}


\begin{theorem}\label{T0}
Suppose that $A$ is a separable metric structure in a separable
language.
\begin{enumerate}
\item\label{T0.1}
If the theory of $A$ is stable then for any two non-principal
ultrafilters $\cU,\cV$ on $\bbN$, $A^{\cU} \cong A^{\cV}$.
\item \label{T0.2} If the theory of $A$ is unstable then the following are equivalent:
\begin{enumerate}
\item $A$ has  fewer than $2^{2^{\aleph_0}}$ nonisomorphic ultrapowers associated with
nonprincipal ultrafilters on $\bbN$.
\item for any two non-principal ultrafilters $\cU,\cV$ on
$\bbN$, $A^{\cU} \cong A^{\cV}$;
\item the Continuum Hypothesis holds.
\end{enumerate}
\pushcounter
\end{enumerate}
\end{theorem}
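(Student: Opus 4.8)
\textbf{Proof proposal for Theorem~\ref{T0}.}

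The plan is to treat the two parts separately but with a common backbone: saturation together with the characterizations of stability from Theorem~\ref{T.OP}. For part~\eqref{T0.1}, suppose $\Th(A)$ is stable. The ultrapowers $A^{\cU}$ and $A^{\cV}$ are elementarily equivalent (both have theory $\Th(A)$ by \L os's theorem, Proposition~\ref{P.Los}) and both are countably saturated (Proposition~\ref{P.ctbly.saturated}), with character density $\fc$ since $A$ is separable and the language is separable. If the Continuum Hypothesis holds, this is already Corollary~\ref{C.CH}. So assume CH fails, and we want to conclude isomorphism anyway. The idea is that for stable theories, countably saturated models of density $\fc$ in a separable language are already \emph{saturated}, i.e.\ $\fc$-saturated. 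To see this, I would run a transfinite back-and-forth of length $\fc$ directly rather than invoking Proposition~\ref{P.saturated}: enumerate dense subsets $\langle d_\alpha^{\cU} : \alpha < \fc\rangle$ and $\langle d_\alpha^{\cV} : \alpha<\fc\rangle$ of the two ultrapowers, and build a continuous increasing chain of partial elementary bijections between countable (or small) subsets, at each stage extending by one new point on each side. The extension step requires realizing, in the opposite ultrapower, the type of the new point over the (small) set built so far; countable saturation only realizes types over countable sets, so this is where stability must enter: finite determinacy of types (Theorem~\ref{T.OP}(3)) says every type over the small set $B$ is determined by its restriction to a \emph{countable} subset $B_0\subseteq B$ — more precisely by finitely many conditions relative to each formula — so a type over $B$ is realized as soon as its countable approximation is, which countable saturation delivers.

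Let me be more careful about that extension step, since it is the heart of part~\eqref{T0.1}. Fix a partial elementary map $f\colon X^{\cU}\to X^{\cV}$ with $|X^{\cU}|<\fc$ and a point $a\in A^{\cU}$; we must find $b\in A^{\cV}$ with $f\cup\{(a,b)\}$ elementary, i.e.\ $b$ realizing $f_*(\tp(a/X^{\cU}))$. By finite determinacy, for each formula $\varphi(x,\bar y)$ and each $m$ there is a finite $B_{\varphi,m}\subseteq X^{\cU}$ such that agreement up to $1/k$ on $B_{\varphi,m}$ forces agreement up to $1/m$ on the type; collecting these over all formulas and all $m$ gives a countable set $X_0^{\cU}\subseteq X^{\cU}$ such that $\tp(a/X^{\cU})$ is the unique extension of $\tp(a/X_0^{\cU})$ to $X^{\cU}$. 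Then realize $f_*(\tp(a/X_0^{\cU}))$ in $A^{\cV}$ using countable saturation (this is a type over the \emph{countable} set $f(X_0^{\cU})$), call the realization $b$; finite determinacy applied on the $\cV$-side shows $b$ in fact realizes $f_*(\tp(a/X^{\cU}))$. Running this back-and-forth, alternating sides, through all $\fc$ points of the two dense sets produces an isomorphism $A^{\cU}\cong A^{\cV}$. (One technical point to handle: finite determinacy as stated quantifies over formulas $\varphi(\bar x,\bar y)$ with $\bar y$ ranging over domains, so ``countably many formulas'' should really be read as countably many (formula, domain, $m$) triples, which is fine since the language is separable — Proposition~\ref{P.Separable.language}.)

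For part~\eqref{T0.2}, assume $\Th(A)$ is unstable, so by Theorem~\ref{T.OP} it has the order property: there are $\psi(\bar x,\bar y)$, domains $\bar D$, a model, and an infinite sequence with $\psi(\bar a_i,\bar a_j)=0$ for $i<j$ and $=1$ for $i\ge j$. The implication (b)$\Rightarrow$(a) is exactly Corollary~\ref{C.CH}, so the work is (a)$\Rightarrow$(b): if CH fails we must produce non-principal $\cU,\cV$ with $A^{\cU}\not\cong A^{\cV}$. The strategy is to make the two ultrapowers differ in an elementary invariant detectable from the order property — concretely, the length of the longest ``$\psi$-ordered'' well-ordered sequence (or, dually, whether a certain linear-order type embeds). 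Using the order property and compactness one can arrange, over a countable parameter set coming from $A$ itself sitting diagonally, a consistent type asserting the existence of a $\psi$-increasing $\omega_1$-like configuration; whether such a configuration is realized in $A^{\cU}$ is sensitive to the combinatorics of $\cU$ (this is the continuous analogue of the classical fact that, for unstable theories, ultrapowers by different ultrafilters on $\bbN$ realize different types and hence have different saturation/cofinality spectra). I would build one ultrafilter $\cU$ so that $A^{\cU}$ has a long $\psi$-chain — e.g.\ a chain of order type $(2^{\aleph_0})^+$ in the sense of cofinality, impossible if that cardinal is bigger than $\fc$ — contradiction, so instead: build $\cU$ realizing a $\psi$-chain cofinal of one uncountable character and $\cV$ realizing none of a certain smaller character, exploiting $\neg\mathrm{CH}$ to separate $\aleph_1$ from $\fc$. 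The cleanest route is probably to transfer the known discrete result: pick a first-order unstable theory phenomenon and port it, but since the excerpt says no discrete reference was found, I would instead directly build, under $\neg\mathrm{CH}$, an ultrafilter $\cU$ on $\bbN$ that is \emph{not} $\aleph_1$-good-in-the-relevant-sense so that $A^{\cU}$ omits an $\aleph_1$-sized $\psi$-order while $A^{\cV}$ for a suitably generic $\cV$ realizes one; distinguishing these requires only that ``contains a $\psi$-ordered set of size $\aleph_1$'' is not an isomorphism invariant unless forced, which it is not when $\aleph_1<\fc$.

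\textbf{Main obstacle.} The genuinely hard part is part~\eqref{T0.2}(a)$\Rightarrow$(b): constructing two ultrafilters on $\bbN$ under $\neg\mathrm{CH}$ whose ultrapowers of $A$ are provably non-isomorphic. Part~\eqref{T0.1} is essentially a clean back-and-forth powered by finite determinacy and should go through routinely once that is set up. For~\eqref{T0.2} one must do real set theory with ultrafilters — the natural approach is to use the order property to code an ordered set and then to choose $\cU$ so that $A^{\cU}$ has, say, no increasing $\psi$-sequence of length $\omega_1$ that is ``bounded'' in the appropriate sense (forcing its $\psi$-order-cofinality structure to have a prescribed value), while $\cV$ is chosen to violate that, so that the two cannot be isomorphic; managing the interaction between the metric/continuous setting and the cofinality bookkeeping (so that limits along the ultrafilter land where one wants, and so that the relevant types stay consistent and are controlled by a genuinely countable amount of data) is the delicate point, and it is precisely here that $\neg\mathrm{CH}$ (giving room between $\aleph_1$ and $\fc$) is indispensable.
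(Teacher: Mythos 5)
Your overall architecture matches the paper's (part~\eqref{T0.1} via $\fc$-saturation of $A^{\cU}$, part~\eqref{T0.2} via the order property plus set theory of ultrafilters), but both halves have a genuine gap at the decisive step. In part~\eqref{T0.1}, the extension step of your back-and-forth does not work as described. Finite determinacy does let you extract a countable $X_0\subseteq X^{\cU}$ over which the type of $a$ is ``defined,'' but realizing the \emph{restriction} of that type to $f(X_0)$ by countable saturation only controls the type of $b$ over the countable set $f(X_0)$; the type of $b$ over all of $f(X^{\cU})$ is then merely \emph{some} extension of that restricted type, and a type over a countable model generally has many extensions to a larger parameter set, only one of which is the intended (non-forking) one. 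Already in the discrete stable theory of pure equality this fails: the type of a new element over $M$, restricted to a countable $X_0$, is realized by any element of $M\setminus X_0$, which does not realize the full type. Your appeal to ``finite determinacy on the $\cV$-side'' does not close this, because nothing forces the type of $b$ over $f(X^{\cU})$ to be governed by the same definition scheme. This is exactly where the paper has to work: it builds a Morley sequence of size $\fc$ in $q|_{B_0}$ inside $A^{\cU}$ by an explicit diagonal construction, uses $|B|<\fc$ to find a member of that sequence independent from $B$ over $B_0$, and only then concludes, via symmetry of non-forking and uniqueness of the non-forking extension over the model $B_0$, that this particular realization realizes $q$. Your proposal has no substitute for the Morley-sequence/independence step, so $\fc$-saturation is not established.

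In part~\eqref{T0.2}, the reduction (b)$\Rightarrow$(a) via Corollary~\ref{C.CH} and the reduction of (a)$\Rightarrow$(b) to the order property are right, but the distinguishing invariant you propose --- ``contains a $\psi$-ordered set of size $\aleph_1$'' --- cannot separate any two ultrapowers: by countable saturation (Proposition~\ref{P.ctbly.saturated}) one can always extend a countable $\psi$-chain, so \emph{every} $A^{\cU}$ contains $\psi$-chains of length $\aleph_1$, and containment of such a chain is in any case preserved by isomorphism. The invariant that actually works is the gap (cut) structure of the $\psi$-order: the paper shows that $A^{\cU}$ contains an $(\aleph_0,\lambda)$-$\psi$-gap precisely when a cardinal invariant $\kappa(\cU)$ of the ultrafilter equals $\lambda$, and then imports the theorem of Dow \cite{Do:Ultrapowers} that $\neg$CH yields nonprincipal $\cU,\cV$ on $\bbN$ with $\kappa(\cU)\neq\kappa(\cV)$ (alternatively one can quote \cite{FaSh:Dichotomy}). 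Without this external set-theoretic input, or a construction replacing it, no pair of ultrafilters is actually produced; your sketch names the difficulty but does not resolve it.
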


It is worth mentioning that Theorem~\ref{T0} is true in the first order context,
as can be seen by considering a model of a first-order theory as a metric model with respect to the
discrete metric.  Although this is undoubtedly known to many, we were unable to find a direct reference. The proof of (1) will use tools from stability theory, and the reader
may want to refer to \cite{BYU:ContStab} for background.

\begin{proof} \eqref{T0.1} Assume that the theory of $A$ is stable. We will show that $
A^{\cU}$ is $\fc$-saturated and so it will follow that $A^{\cU} \cong
A^{\cV}$ no matter what the size of the continuum is (see Proposition~\ref{P.saturated}).

So suppose that $B \subseteq A^{\cU}$,  $|B| < \fc$, and $q$ is a
type over $B$. We may assume  that $B$ is an elementary submodel and
that $q$ is nonprincipal and complete. As the theory of $A$ is
stable, choose a countable elementary submodel $B_0 \subseteq B$ so
that $q$ does not fork over $B_0$. We shall show that in $A^{\cU}$
one can always find a Morley sequence in $q|_{B_0}$ of size $\fc$.

Towards this end, 
fix a countable Morley sequence $I$ in the type of $q|B_0$ and let $\bar q = tp(I/B_0)$, a type in the variables $x_n$ for $n \in \bbN$.  Since $B_0$ is countable and the language is separable, there are countably many formulas $\psi_n(x_1,\ldots,x_n,b_n)$ over $B_0$ such that $\psi_n(x_1,\ldots,x_n,b_n) = 0 \in \bar q$ and $\{\psi_n(x_1,\ldots,x_n,b_n) = 0 : n \in \bbN\}$ axiomatizes $\bar q$.


Now let $D_i = \{ n \geq i : \inf_x \psi_i(x,b_i(n)) < 1/i\}$
For a fixed $n$,
consider $\{i : n \in D_i\}$. This set has a maximum element; call it
$i_n$. Now fix $a^n_1,\ldots,a^n_{i_n} \in A$ such that
$\psi_{i_n}(a^n_1,\ldots,a^n_{i_n},b_{i_n}) <1/i_n$.  Now consider the set $J$ of
all $g:\bbN \rightarrow A$ such that $g(n) \in
\{a^n_1,\ldots,a^n_{i_n}\}$ for all $n$.  Any $g \in J$ will satisfy $q|B_0$ in
$A^{\cU}$ since every element of $I$ realized that type.  If $g_0,\ldots,g_k$ are in $J$ and distinct modulo $U$ then they are
independent over $B_0$ since $I$ was a Morley sequence.  To finish then, we need to see that there are $\fc$-many
distinct $g$'s modulo $U$. This follows from the fact that the
$i_n$'s are not bounded. To see this, for a fixed $m$, let $X = \{ n
\geq m : \inf_x\psi_m(x,b(n)) \leq 1/m \}$. Pick any $n \in X$. We
have that $n \in D_m$ so $i_n \geq m$ and we conclude that the
$i_n$'s are not bounded.

Since $|B| < \fc$, there is a $J_0$ of cardinality less than
$\fc$ such that $B$ is independent from $J$ over $J_0$.  Choosing
any $a \in J \setminus J_0$ and using symmetry of non-forking
remembering that $J$ is a Morley sequence over $B_0$, it follows
that $a$ is independent from $B$ over $B_0$.  Since $B_0$ is a
model, $q|_{B_0}$ has a unique non-forking extension to $B$ and it
follows that $a$ realizes $q$.  This finishes the proof of~\eqref{T0.1}.


\eqref{T0.2}
 If the Continuum Hypothesis  holds then $A^{\cU}$ is always saturated and so
for any two ultrafilters $\cU,\cV$, $A^{\cU} \cong A^{\cV}$ even if
we fix the embedded copy of $A$ (Corollary~\ref{C.CH}).

The implication (a) implies (c) follows from \cite[Theorem~3]{FaSh:Dichotomy} and of course (b) implies (a).
However, (b) implies (c) also can be proved by  a minor modification of proof from \cite{FaHaSh:Model1}
(which is in turn a modification of a proof from
\cite{Do:Ultrapowers}), so we assume that the reader has a copy of the former handy and
we sketch the differences.
Assume the theory of $A$ is unstable.
Then by Theorem~\ref{T.OP} it has the order property.
The formula $\psi$ witnessing the order property satisfies
\cite[Properties~2.1]{FaHaSh:Model1}
by Corollary~\ref{C.Properties.g}.
Therefore the analogues of
\cite[Lemma~2.4, Lemma~2.5 and Proposition~2.6]{FaHaSh:Model1}
 can be
proved by quoting their proofs verbatim. Hence if   $\cU$ is a
nonprincipal ultrafilter on $\bbN$ then $\kappa(\cU)=\lambda$
(defined in \cite[paragraph before Lemma~2.5]{FaHaSh:Model1})
 if and only if there is a $(\aleph_0,\lambda)$-$\psi$-gap in $A^{\cU}$.

By  \cite[Theorem~2.2]{Do:Ultrapowers}, if CH fails then there are ultrafilters $\cU$ and $\cV$ on
$\bbN$ such that         $\kappa(\cU)\neq \kappa(\cV)$ and this concludes the proof.
\end{proof}


\section{Concluding Remarks}
In this final section we include two examples promised earlier and state
three rather different problems. 

\subsection{A non-axiomatizable category of C*-algebras}
Recall that UHF, or uniformly hyperfinite,  algebras are C*-algebras that are C*-tensor products of (finite-dimensional) matrix algebras.
They form a subcategory of C*-algebras and the morphisms between them are
 *-homomorphisms.

\begin{proposition} \label{P.UHF} The category of UHF algebras is not axiomatizable.
\end{proposition}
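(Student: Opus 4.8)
The plan is to show that the class of UHF algebras is not closed under ultrapowers, which by the discussion of axiomatizability (in particular the fact that an axiomatizable class is closed under ultraproducts, via \L os's theorem, Proposition~\ref{P.Los}) is enough to conclude non-axiomatizability. Concretely, I would fix a UHF algebra $A$, say $A = M_{2^\infty}$ (the CAR algebra), and a nonprincipal ultrafilter $\cU$ on $\bbN$, and argue that $A^{\cU}$ is not UHF. The key structural obstruction is that a UHF algebra is \emph{separable} by definition (it is a countable tensor product of matrix algebras), whereas $A^{\cU}$ for $A$ infinite-dimensional is nonseparable. So the first step is simply to record: every UHF algebra is separable, but $M_{2^\infty}^{\cU}$ has density character $\fc$ (for instance because it contains $\ell^\infty(\bbN)/c_0(\bbN)$ via a diagonal sequence of orthogonal projections, or because it is countably saturated and realizes uncountably many $1$-types in $\cU$-many coordinates). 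Hence $A^{\cU}$ cannot be isomorphic to $M(B)$ for any UHF algebra $B$, and so the category of UHF algebras is not closed under ultrapowers.

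A cleaner variant, which I expect is what the authors have in mind since it avoids any cardinality bookkeeping, is to use the trace. A UHF algebra has a \emph{unique} tracial state, and this trace takes values in a distinguished countable dense subgroup of $\bbR$ (the group generated by the inverses of the supernatural number's factors); in particular the range of the trace on projections of a UHF algebra is a proper subset of $[0,1]\cap\bbQ$ determined by its supernatural number. But in $A^{\cU}$ one can realize projections of trace equal to \emph{any} real number in $[0,1]$: given $r\in[0,1]$, choose in each matrix block $M_{2^n}$ a projection $p_n$ with $\tau(p_n)\to r$, and let $\bfp = (p_n)_n \in A^{\cU}$; then $\bfp$ is a projection and $\tau^{A^{\cU}}(\bfp) = \lim_{n\to\cU}\tau(p_n) = r$. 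Taking $r$ irrational, $A^{\cU}$ has a projection of irrational trace, which no UHF algebra has. This shows $A^{\cU}$ is not $*$-isomorphic to any UHF algebra, hence not of the form $\CM(B)$ with $B$ UHF, so the class is not closed under ultraproducts and therefore, by the remarks following the definition of axiomatizability (closure under ultraproducts is necessary), is not axiomatizable.

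The main obstacle, such as it is, is purely expository: one has to be a little careful about what ``not axiomatizable'' is being claimed. The definition permits an arbitrary choice of language, theory, and morphism class, so in principle one cannot just produce one ultrapower that fails; but the morphisms in the UHF category are all the $*$-homomorphisms, and any axiomatization realizing this category as models of a theory $T$ in a language $\CL$ forces (via the last bullet of the axiomatizability discussion, identifying $\Hom$-sets) the underlying C*-algebras of models of $T$ to be exactly the UHF algebras up to isomorphism, and then $\CM(A)^{\cU} = \CM(A^{\cU})$ (Proposition~\ref{P.ultrapowers}) is a model of $T$ whose underlying algebra $A^{\cU}$ is not UHF — a contradiction. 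So the real content is exhibiting a single ``bad'' element of an ultrapower that cannot live in any UHF algebra, and the irrational-trace projection above does this cleanly.

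\begin{proof}
Suppose the category of UHF algebras were axiomatizable, say by a theory $T$ in a language $\CL$ together with a specification of morphisms as the $*$-homomorphisms. By the discussion following the definition of axiomatizability, for every UHF algebra $A$ there is a model $\CM(A)\models T$, and every model of $T$ arises in this way up to isomorphism; moreover the models of $T$ are closed under ultrapowers (Proposition~\ref{P.Los}), and by Proposition~\ref{P.ultrapowers} $\CM(A)^{\cU}=\CM(A^{\cU})$ for any ultrafilter $\cU$. Hence $A^{\cU}$ is again a UHF algebra whenever $A$ is.

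Now let $A=M_{2^\infty}$ be the CAR algebra and let $\cU$ be a nonprincipal ultrafilter on $\bbN$. Fix an irrational number $r\in(0,1)$. For each $n$ choose a projection $p_n\in M_{2^n}\subseteq A$ with $|\tau(p_n)-r|<1/n$; this is possible since $\tau(M_{2^n})$ takes the values $k/2^n$. Let $\bfp\in A^{\cU}$ be the element represented by $(p_n)_n$. Then $\bfp=\bfp^*=\bfp^2$ in $A^{\cU}$, so $\bfp$ is a projection, and the unique trace of $A^{\cU}$ satisfies $\tau(\bfp)=\lim_{n\to\cU}\tau(p_n)=r$. But every UHF algebra has a unique trace whose range on projections is contained in $\bbQ$, so $A^{\cU}$ admits no $*$-isomorphism onto a UHF algebra. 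This contradicts the conclusion of the previous paragraph, so the category of UHF algebras is not axiomatizable.
\end{proof}
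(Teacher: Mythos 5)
Your proof is correct and follows essentially the same route as the paper: both reduce, via Propositions~\ref{P.ultrapowers} and~\ref{P.Los}, to showing that the norm ultrapower of the CAR algebra is not UHF, by exhibiting a sequence of projections whose traces converge along $\cU$ to a value that the unique trace of a UHF algebra cannot take on a projection. The paper sends the traces to $0$ (so the induced trace on $A^{\cU}$ is non-faithful, contradicting automatic faithfulness of the unique trace of a UHF algebra), whereas you send them to an irrational $r$ (contradicting rationality of traces of projections) --- a minor variant that the paper itself invokes in the remark immediately following its proof, in order to handle tracial ultraproducts where a trace-zero projection would vanish.
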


\begin{proof} By Proposition~\ref{P.ultrapowers} it will suffice to show that this
category  is not closed under taking (C*-algebraic) ultraproducts.  We do this by repeating an argument from Ge-Hadwin (\cite[Corollary 5.5]{GeHa}) exploiting the fact that UHF algebras have unique traces that are automatically faithful.

Let $A$ be the CAR algebra $\bigotimes_{n\in \bbN} M_2(\bbC)$ with trace $tr$, let $\cU$ be
a nonprincipal ultrafilter  on $\bbN$, and let $\{p_n\} \subset A$ be projections with $tr(p_n) = 2^{-n}$.  The sequence $(p_n)$ represents a nonzero projection in $A^\cU$, but $tr^\cU((p_n)) = 0$.  Thus $tr^\cU$ is a non-faithful tracial state, so that $A^\cU$ is not UHF.
  \end{proof}
The same argument shows that simple C*-algebras are not axiomatizable.

Since every UHF algebra has a unique trace
one could also consider  tracial ultraproducts, instead of norm-ultraproducts,
   of UHF algebras. However,
  such an ultraproduct is always a II$_1$ factor (\cite[Theorem 4.1]{HL}) and therefore
  not a UHF algebra (because projections in UHF algebras have rational traces).

\subsection{An unstable Banach algebra whose underlying Banach space is stable} \label{S:ba}

The $L^p$ Banach spaces ($1 \leq p < \infty$) are known to be stable (\cite[Section 17]{BYBHU}), and they become stable Banach algebras when endowed with the zero product.  Actually $\ell^p$ ($1 \leq p <\infty$) with pointwise multiplication is also stable; this can be shown by methods similar to \cite[Lemma 4.5 and Proposition 4.6]{FaHaSh:Model1}.
 We now prove that the usual convolution product turns $\ell^1(\bbZ)$ into an unstable Banach algebra, as was mentioned in \S\ref{S.Stability}.

\begin{proposition} \label{T.BAlg}
The Banach algebra $\ell^1 = \ell^1(\bbZ,+)$ (with convolution product) is unstable.
\end{proposition}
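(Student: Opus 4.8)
The plan is to exhibit the order property directly for $\ell^1(\bbZ)$ with convolution, using the characters of $\bbZ$, i.e.\ the functions $n\mapsto z^n$ for $z$ on the unit circle, which are multiplicative with respect to convolution. Concretely, I would work with finitely supported elements: for a finite set $F\subseteq\bbZ$ and coefficients $c_k$, the element $a=\sum_{k\in F}c_k\delta_k$ satisfies, under the Gelfand transform, $\hat a(z)=\sum_{k\in F}c_k z^k$ for $|z|=1$, and $\|a\|_1=\sum|c_k|\ge\sup_{|z|=1}|\hat a(z)|$. The idea is to build elements $a_i$ whose ``Fourier supports'' are arranged so that a suitably chosen formula $\psi(x,y)$ — something like $\psi(x,y)=\bigl|\;1-\|xy-(\text{scalar}\cdot 1)\|\;\bigr|$ expressed entirely in the language of Banach algebras, or more simply a formula measuring whether the product $xy$ is close to a fixed idempotent-like target — evaluates to $0$ for $i<j$ and to $1$ for $i\ge j$.

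The cleanest route I would try first is to use trigonometric polynomials that behave like approximate characters. Fix $\e>0$ and, for each $i\in\bbN$, choose a point $z_i$ on the circle (say $z_i=e^{2\pi i\alpha}$ for an appropriate rational or irrational $\alpha$, or simply $z_i$ the $i$-th term of a sequence dense in the circle) and let $a_i$ be a normalized Fej\'er-type kernel centered so that $\hat a_i$ is close to an indicator of a small arc around $z_i$; alternatively take $a_i=\delta_{k_i}$ for a rapidly increasing sequence $k_i$, so $\hat a_i(z)=z^{k_i}$. Then $a_i * a_j = \delta_{k_i+k_j}$, and one computes $\|a_i*a_j - \lambda\delta_0\|_1$ and related quantities. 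The point is to find a formula $\psi$ in the language $\CL_{C^*}$-style language for Banach algebras (norm, $+$, scalar multiplication, product) — note one may use the unitization to have a unit, or just work without it — such that the Boolean pattern ``$0$ if $i<j$, $1$ if $i\ge j$'' is produced. A standard trick: order the supports so that $k_i+k_j$ lands in one prescribed region when $i<j$ and another when $i\ge j$ is impossible with plain shifts (it's symmetric), so instead I would use an \emph{asymmetric} construction: let $a_i = \delta_0 + \delta_{k_i}$ with $k_1<k_2<\cdots$ lacunary, and examine $\|a_i * a_j\|_1$ or norms of truncations; still symmetric. The genuinely asymmetric device is to pair a ``large'' element with a ``small dual'' element: take $\psi(x,y)$ to detect whether $y$ (viewed as a functional via duality $\ell^\infty=(\ell^1)^*$) nearly annihilates $x$, which in a Banach-algebra-definable way can be done by $\psi(x,y)=\min(1,\text{(something like)}\ \|x\|+\|y\|-\|x+y\|$ adjusted$)$, exploiting that $\|\delta_m+\delta_{-m}\|=2$ but cancellations occur in products.

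Given the delicacy above, the cleanest concrete choice I would commit to: take $a_i$ with $\hat a_i$ a fixed smooth bump of height $1$ supported near $1\in\bbT$ but dilated so its ``mass'' is pushed to frequency $\ge i$, versus $b_j$ a conjugate-type element, and let $\psi(x,y)=\bigl|1-\min(1,\|x-x\,y\|)\bigr|$ or similar — then Fej\'er/Riemann–Lebesgue-type estimates give that $\widehat{a_i\, b_j}$ is $\approx \hat a_i$ when the frequency of $b_j$ is below that of $a_i$ (yielding value $0$) and $\approx 0$ otherwise (yielding value $1$), producing exactly the order pattern once indices are arranged in increasing lacunary fashion. The uniform separation $d(\bar a_i,\bar a_j)\ge\e$ required by the definition of the order property then comes for free from the fact that distinct bumps differ in $\ell^1$-norm by a fixed amount.

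\medskip

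The main obstacle I anticipate is the \emph{asymmetry}: convolution on $\ell^1(\bbZ)$ is commutative, so any formula built naively from products and norms is symmetric in $x$ and $y$ in a way that fights the required $i<j$ versus $i\ge j$ dichotomy. Overcoming this is the crux: one must break symmetry by using the \emph{ordering of frequencies} (a lacunary sequence $k_i\to\infty$) together with a formula that is sensitive to which of two frequencies is larger — this is exactly the kind of one-sided estimate that Riemann–Lebesgue / decay-of-Fourier-coefficients supplies (``small times large stays large-shaped; but a high-frequency factor kills everything''). Once the right $\psi$ and the right sequence are pinned down, the verification is a routine Fourier-analytic estimate (triangle inequality in $\ell^1$ on one side, and $\ell^1$-norm $\ge$ sup-norm of the transform plus a decay estimate on the other), after which Theorem~\ref{T.OP} immediately gives instability. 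I would therefore structure the proof as: (1) define the sequence and the formula; (2) prove the $i<j$ estimate ($\psi=0$, an exact or near-exact cancellation); (3) prove the $i\ge j$ estimate ($\psi=1$, a lower bound via evaluating the Gelfand transform at a well-chosen circle point); (4) invoke the order property $\Rightarrow$ instability direction of Theorem~\ref{T.OP}.
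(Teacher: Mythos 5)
Your overall framework is the right one --- reduce to the order property via Theorem~\ref{T.OP}, work with elements whose Gelfand transforms you can control, and use $\|a\|_{\ell^1}\geq\|\hat a\|_{C(\bbT)}$ for lower bounds --- and you correctly diagnose that the crux is breaking the symmetry of the commutative convolution. But the proposal never actually closes that gap: every concrete candidate you write down is either a symmetric construction you yourself discard, or rests on Fourier-analytic claims that are not correct as stated. Two specific problems. First, the ``$\psi=0$'' direction requires an \emph{upper} bound on an $\ell^1$-norm, and the Gelfand transform is only a contraction into $C(\bbT)$, not an isometry; so showing $\hat a_i\hat b_j\approx\hat a_i$ (or $\approx 0$) in sup-norm gives you nothing about $\|a_i b_j-a_i\|_{\ell^1}$. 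Any ``approximate bump'' scheme therefore needs genuine $A(\bbT)$-norm estimates that you have not supplied. Second, the heuristic ``a high-frequency factor kills everything'' is false for convolution on $\ell^1(\bbZ)$: convolving with $\delta_k$ is an isometry for every $k$, and $|\hat\delta_k|\equiv 1$ on $\bbT$; Riemann--Lebesgue concerns decay of Fourier coefficients of $L^1(\bbT)$-functions and does not make $\hat a_i\hat b_j$ small. So steps (2) and (3) of your outline, which you call routine, are exactly where the proof is missing.

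For comparison, the paper breaks the symmetry not through the choice of elements alone but through a quantifier in the formula: $\varphi(x,y)=\inf_{\|z\|\leq 1}\|xz-y\|$, which asks whether $y$ is (approximately) a \emph{bounded multiple} of $x$ --- a genuinely asymmetric relation even in a commutative algebra. With $x_i=\bigl(\tfrac{f_1+f_{-1}}{2}\bigr)^{2^i}$ (probability measures, hence unit vectors), the $i\leq j$ case gives $\varphi(x_i,x_j)=0$ \emph{exactly}, by taking $z=\bigl(\tfrac{f_1+f_{-1}}{2}\bigr)^{2^j-2^i}$ --- this sidesteps your first problem entirely, since no approximate $\ell^1$ upper bound is ever needed. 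For $i>j$ one evaluates the Gelfand transforms ($\hat x_i(e^{it})=(\cos t)^{2^i}$) at the single point $t_0$ where $(\cos t_0)^{2^j}=\tfrac12$: then $\|x_iz-x_j\|_{\ell^1}\geq\bigl|(\tfrac12)^{2^{i-j}}\hat z(e^{it_0})-\tfrac12\bigr|\geq\tfrac14$ uniformly over $\|z\|\leq 1$. If you want to salvage your bump-function approach, you would need to replace ``approximately equal transforms'' by exact identities (e.g.\ nested plateaus with $\hat a_j\equiv 1$ on the support of $\hat a_i$ for $i<j$) and then still control the $\ell^1$-norms of the resulting kernels; the paper's divisibility trick is the cleaner route.
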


\begin{proof}
It suffices to show the order property for $\ell^1$.  This means we must give a formula $\varphi(x,y)$ of two variables (or $n$-tuples) on $\ell^1$, a bounded sequence $\{x_i\} \subset \ell^1$, and $\varepsilon>0$ such that $\varphi(x_i, x_j) \leq \varepsilon$ when $i \leq j$ and $\varphi(x_i, x_j) \geq 2\varepsilon$ when $i > j$.

Let $\{f_n\}_{n \in \bbZ}$ denote the standard basis for $\ell^1$, so that multiplication is governed by the rule $f_m f_n = f_{m+n}$.  Also let $\ell^1 \ni x \mapsto \hat{x} \in C(\mathbb{T})$ be the Gel'fand transform on $\ell^1$, so that $\hat{f}_n$ is the function $[e^{it} \mapsto e^{int}]$.  The Gel'fand transform is always a contractive homomorphism; on $\ell^1$ it is injective but not isometric.

We take $\varphi(x,y) = \inf_{\|z\| \leq 1} \|x z - y\|$, $x_i = (\frac{f_1 + f_{-1}}{2})^{2^i}$, and $\varepsilon = \frac18$.  Note that all the $x_i$ are unit vectors, being convolution powers of a probability measure on $\bbZ$, and $\hat{x}_i = [e^{it} \mapsto (\cos t)^{2^i}] \in C(\mathbb{T})$.

For $i \leq j$, we have $\varphi(x_i, x_j) = 0$ by taking $z = (\frac{f_1 + f_{-1}}{2})^{2^j - 2^i}$.

For $i > j$, let $t_0 \in (0,2\pi)$ be such that $(\cos t_0)^{2^j} = \frac12$.  For any $z \in (\ell^1)_{\leq 1}$,
$$\|x_i z - x_j \|_{\ell^1} \geq \|\hat{x}_i \hat{z} - \hat{x}_j \|_{C(\mathbb{T})} \geq \left |\left(\frac{1}{2}\right)^{2^{i -j}} \hat{z}(e^{it_0}) - \frac12 \right| \geq \frac14,$$
where the middle inequality is justified by evaluation at $t_0$.  We conclude that $\varphi(x_i,x_j) \geq \frac14$ as desired.
\end{proof}

\begin{question}
Is there a nice characterization of stability for Banach algebras?
\end{question}

It is obviously necessary that the underlying Banach space be stable.
The proof of Proposition \ref{T.BAlg} works for any abelian Banach algebra that contains a unit vector $g$ such that the range of $\hat{g}$ contains nonunit scalars with modulus arbitrarily close to 1.  (Above, $g$ was $\frac{f_1 + f_{-1}}{2}$.)  So for instance the convolution algebra $L^1(\mathbb{R},+)$ is covered, and in fact \textit{any} probability density will do for $g$ in this case.  



%
%
%
%
%
%
%
%
%
%
%

\subsection{K-theory reversing automorphisms of the Calkin algebra}
A well-known problem of Brown--Douglas--Fillmore (\cite[1.6(ii)]{BrDoFi:Extensions}) asks
whether there is an automorphism
of the Calkin algebra that sends the image of the unilateral shift to its adjoint.
The main result of~\cite{Fa:All} implies that if ZFC is consistent then there is a model of ZFC in
which there is no such automorphism. A deep metamathematical result
of Woodin, known as the \emph{$\Sigma^2_1$-absoluteness theorem}, essentially (but not literally)
implies that the Brown--Douglas--Fillmore question has a positive answer if and only
if the Continuum Hypothesis implies a positive answer (see~\cite{Wo:Beyond}).
The type referred to in the following question is the type over the empty set in the sense of
\S\ref{S.Types}.

\begin{question} \label{Q.BDF} Do the image of the unilateral shift in the Calkin algebra and its
adjoint have the same type in the Calkin algebra?
\end{question}

A negative answer to Question~\ref{Q.BDF} would imply a negative answer to the Brown--Douglas--Fillmore problem. A positive answer to the former would reduce the latter to a problem about
model-theoretic properties of the Calkin algebra. The Calkin algebra is not
countably saturated (first author, unpublished),  but it may be countably homogeneous.  For example, its poset of projections is countably saturated (\cite{Had:Maximal}, see also
\cite{Pede:Corona}).
Countable homogeneity of the Calkin algebra
 would, in the presence of the Continuum Hypothesis, imply that Question~\ref{Q.BDF}
 has a positive answer if and only if the Brown--Douglas--Fillmore question has a positive answer.
It is not difficult to prove that
the (model-theoretic) types of operators in the Calkin algebra, and therefore the
answer to Question~\ref{Q.BDF}, are
absolute between transitive models of ZFC.

\subsection{Matrix algebras}
We end with discussion of finite-dimensional matrix algebras and
a result that partially complements  \cite[Proposition~3.3]{FaHaSh:Model1},
where it was proved that if the Continuum Hypothesis fails then the matrix algebras $M_n(\bbC)$,
for $n\in \bbN$, have nonisomorphic tracial ultraproducts.

\begin{prop} Every increasing sequence $n(i)$, for $i\in \bbN$, of
natural numbers has a further subsequence $m(i)$, for $i\in \bbN$
such that if the Continuum Hypothesis holds then all tracial ultraproducts of
$M_{m(i)}(\bbC)$, for $i\in \bbN$, are isomorphic.
\end{prop}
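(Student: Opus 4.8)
The plan is to find a subsequence $m(i)$ so that the tracial ultraproduct $\prod_{\cU} M_{m(i)}(\bbC)$ always has the same isomorphism type, using the saturation machinery of Proposition~\ref{P.saturated} together with the axiomatizability of II$_1$ factors (Proposition~\ref{P.II-1-completeness}). Recall that an ultraproduct of matrix algebras, being an ultraproduct of tracial von Neumann algebras (in fact finite factors), is again a tracial factor, and for a nonprincipal ultrafilter on $\bbN$ a tracial ultraproduct $\prod_\cU M_{m(i)}(\bbC)$ with $m(i)\to\infty$ is a II$_1$ factor; it is separable-language and countably saturated by Proposition~\ref{P.ctbly.saturated}. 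Under CH, by Proposition~\ref{P.saturated} any two such ultraproducts are isomorphic as soon as they are elementarily equivalent and have the same (necessarily $\fc=\aleph_1$) character density. So the entire problem reduces to: \emph{find a subsequence $m(i)$ so that $\Th\bigl(\prod_\cU M_{m(i)}(\bbC)\bigr)$ does not depend on the nonprincipal ultrafilter $\cU$.}

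First I would observe that by Proposition~\ref{P.Los}, $\phi^{\prod_\cU M_{m(i)}} = \lim_{i\to\cU}\phi^{M_{m(i)}(\bbC)}$ for each sentence $\phi$ of $\CL_{\Tr}$. Since the theory (the zero-set of sentences) is determined by countably many sentences up to the separability of the language — more precisely, by Proposition~\ref{P.Separable.language} the interpretations of all $\CL_{\Tr}$-formulas form a separable subset of the space of bounded uniformly continuous functions, so there is a countable set $\{\phi_k : k\in\bbN\}$ of sentences such that knowing $\lim_{i\to\cU}\phi_k^{M_{m(i)}}$ for all $k$ determines the whole theory — it suffices to arrange that each sequence $(\phi_k^{M_{m(i)}(\bbC)})_{i\in\bbN}$ converges as $i\to\infty$. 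Then for \emph{every} nonprincipal ultrafilter $\cU$ the $\cU$-limit equals the ordinary limit, so the theory is independent of $\cU$. This is a routine diagonal/Cantor's-argument: enumerate the countable dense set of sentences as $\phi_1,\phi_2,\dots$; pass to a subsequence on which $\phi_1^{M_{\cdot}}$ converges (using that $\phi_1$ is bounded on the unit ball uniformly across all $M_n$, by the uniform continuity moduli built into $\CL_{\Tr}$, so the values lie in a fixed compact interval and Bolzano–Weierstrass applies); then a further subsequence for $\phi_2$; and so on, finally taking the diagonal subsequence $m(i)$. Along $m(i)$ every $\phi_k^{M_{m(i)}}$ converges, hence so does $\phi^{M_{m(i)}}$ for every $\CL_{\Tr}$-sentence $\phi$ (approximate $\phi$ uniformly by the $\phi_k$).

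Having fixed this $m(i)$, the finish is immediate: for any two nonprincipal $\cU,\cV$ on $\bbN$, Proposition~\ref{P.Los} gives $\phi^{\prod_\cU M_{m(i)}} = \lim_i \phi^{M_{m(i)}} = \phi^{\prod_\cV M_{m(i)}}$ for all sentences $\phi$, so the two ultraproducts are elementarily equivalent II$_1$ factors; under CH they are both saturated of character density $\fc=\aleph_1$, so by Proposition~\ref{P.saturated} they are isomorphic. (One should note that $m(i)$ is still an increasing sequence of natural numbers tending to infinity, so the ultraproduct really is a II$_1$ factor rather than a matrix algebra, which is what makes countable saturation of the ultraproduct meaningful.)

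The only genuinely delicate point is the reduction to \emph{countably many} sentences — i.e.\ verifying that separability of $\CL_{\Tr}$ in the sense of \S\ref{S.Logic} (density character countable, Proposition~\ref{P.Separable.language}) lets one control the full theory by a countable diagonalization. This is where one must be careful that the uniform-continuity moduli are genuinely uniform over the whole class of tracial von Neumann algebras (they are, by the setup of \S\ref{Ex.II-1}), so that a single countable dense family of formulas works simultaneously for all $M_{m(i)}(\bbC)$; once that is in hand, the rest is bookkeeping. I expect this uniformity-of-moduli bookkeeping to be the main (minor) obstacle, with the CH-isomorphism step being a direct citation of Proposition~\ref{P.saturated} and Proposition~\ref{P.ctbly.saturated}.
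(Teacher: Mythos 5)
Your proposal is correct and follows essentially the same route as the paper: use separability of the set of $\CL_{\Tr}$-sentences to extract a subsequence along which the theories $\Th(M_{m(i)}(\bbC))$ converge pointwise, apply \L os's theorem to see that every nonprincipal ultraproduct then has this limit theory, and conclude under CH via countable saturation and Proposition~\ref{P.saturated}. The extra care you take with the countable dense family of sentences and the Bolzano--Weierstrass diagonalization is exactly the content the paper compresses into ``since the set of sentences is separable we can pick a sequence $m(i)$ so that the theories converge pointwise.''
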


\begin{proof}
The set of all $\Lang$-sentences (see \S\ref{S.Syntax}) is
separable. Let $\bT_n=\Th(M_n(\bbC))$, the map associating the value
$\psi^{M_n(\bbC)}$ of a sentence $\psi$ in $M_n(\bbC)$ to $\psi$.
Since the set of sentences is separable we can pick a sequence
$m(i)$ so that the theories $\bT_{m(i)}$ converge pointwise to some
theory $\bT_\infty$. Let $\cU$ be a nonprincipal  ultrafilter such
that $\{m(i): i\in \bbN\}\in \cU$. By \L os's theorem,
Proposition~\ref{P.Los}, $\Th(\prod_{\cU} M_n(\bbC))=\bT_\infty$. By
Proposition~\ref{P.ctbly.saturated} and the Continuum Hypothesis all
such ultrapowers are saturated and therefore
Proposition~\ref{P.saturated} implies all such ultrapowers are
isomorphic.
 \end{proof}

\begin{question} \label{Q.M-n} Let $\psi$ be an $\Lang$-sentence.
Does $\lim_{n\to \infty} \psi^{M_n(\bbC)}$ exist?
\end{question}

A positive answer is equivalent to the assertion
that all ultraproducts $\prod_{\cU} M_n(\bbC)$ are elementarily
equivalent, and therefore isomorphic if the Continuum Hypothesis is assumed (see Proposition~\ref{P.ctbly.saturated} and
Proposition~\ref{P.saturated}).

\bibliographystyle{amsplain}
\bibliography{stablebib}
\end{document}